\newtheorem{theorem}{Theorem}
\newtheorem{lemma}[theorem]{Lemma}
\newtheorem{proposition}[theorem]{Proposition}
\newtheorem{lettertheorem}{Theorem}
\theoremstyle{definition}
\theoremstyle{remark}
\newtheorem{remark}[theorem]{Remark}
\numberwithin{equation}{section}
\DeclareMathOperator*{\esssup}{ess\,sup}
\newcommand{\norm}[2]{\left\|{#1}\right\|_{{#2}}}
\newcommand{\mathdm}{\,\mathrm{dm}}
\newcommand{\cB}{\mathcal{B}}
\newcommand{\cC}{\mathcal{C}}
\newcommand{\cD}{\mathcal{D}}
\newcommand{\cE}{\mathcal{E}}
\newcommand{\cF}{\mathcal{F}}
\newcommand{\cH}{\mathcal{H}}
\newcommand{\cK}{\mathcal{K}}
\newcommand{\cU}{\mathcal{U}}
\newcommand{\cV}{\mathcal{V}}
\newcommand{\cW}{\mathcal{W}}
\newcommand{\bfD}{\bm{\mathcal{D}}}
\newcommand{\e}{\varepsilon}
\newcommand{\integer}{\mathbb{Z}}
\newcommand{\reals}{\mathbb{R}}
\newcommand{\complex}{\mathbb{C}}
\newcommand{\convex}{\cK_{\mathrm{bcs}}}
\newcommand{\lp}[1]{\ell^{#1}}
\newcommand{\Lp}[1]{\mathrm{L}^{#1}}
\newcommand{\1}{\mathbf{1}}
\newcommand{\var}{\,\cdot\,}
\newcommand{\tr}{\mathrm{tr}}
\newcommand{\BMOprodD}{\ensuremath\text{BMO}_{\text{prod},\bfD}}
\newcommand{\matrices}[2]{\mathrm{M}_{{#1}}({#2})}
\newcommand{\complexball}[1]{\mathbf{\overline{B}}_{\complex^{{#1}}}}
\newcommand{\complexsphere}[1]{\mathbf{{S}}_{\complex^{{#1}}}}
\newcommand{\Addresses}{{
  \medskip
  \footnotesize

  \noindent Spyridon~Kakaroumpas (ORCID: \href{https://orcid.org/0000-0003-3676-3096}{0000-0003-3676-3096})\\
  \textsc{W\"{u}rzburg Mathematics Center of Communication and Interaction,\\
          Julius-Maximilians-Universit\"{a}t W\"{u}rzburg,\\
          Campus Hubland Nord,\\
          Emil-Fischer-Stra{\ss}e 41,\\
          97074 W\"{u}rzburg, Germany}\\
  \textit{E-mail address: } \texttt{spyridon.kakaroumpas@uni-wuerzburg.de}

  \medskip

  \noindent Odí~Soler~i~Gibert (ORCID: \href{https://orcid.org/0000-0003-2746-9565}{0000-0003-2746-9565})\\
  \textsc{Universitat Politècnica de Catalunya - BarcelonaTech (UPC),\\
          Edifici C3, despatx C3 206,\\
          Carrer de Jordi Girona 1--3,\\
          08034 Barcelona, Catalunya}\\
  \textit{E-mail address: } \texttt{odi.soler@upc.edu}

}}
\begin{document}

    \title{Vector valued estimates for matrix weighted maximal operators and
           product \texorpdfstring{$\mathrm{BMO}$}{BMO}}

    \author{Spyridon Kakaroumpas
            and Od{\'i} Soler i Gibert\footnote{Corresponding author.}
            \thanks{First author is supported by the Alexander von Humboldt-Stiftung.
            Second author is supported by
            the Generalitat de Catalunya (grant 2021 SGR 00071),
            the Spanish Ministerio de Ciencia e Innovaci\'{o}n (project PID2021-123151NB-I00),
            and the Spanish Research Agency (Mar\'{i}a de Maeztu Program CEX2020-001084-M).}}

    \date{}

    \maketitle

    \begin{abstract}
        We consider maximal operators acting on vector valued functions,
        that is, functions taking values on $\complex^d,$
        that incorporate matrix weights in their definitions.
        We show vector valued estimates, in the sense of Fefferman--Stein inequalities,
        for such operators.
        These are proven using an extrapolation result for convex body valued functions
        due to Bownik and Cruz-Uribe.
        Finally, we show an $\mathrm{H}^1$-$\mathrm{BMO}$ duality for matrix valued functions
        and we apply the previous vector valued estimates to show upper bounds
        for biparameter paraproducts.
        For the reader's convenience, we include an appendix explaining how to
        adapt the extrapolation for real convex body valued functions of Bownik and Cruz-Uribe
        to the setting of complex convex body valued functions that we treat.\\
        \textit{2020 Mathematics Subject Classification:} 42B25, 42B30, 42B35.\\
        \textit{Keywords:} convex set valued functions, maximal operators,
                           matrix weights, Fefferman--Stein vector valued estimates,
                           paraproducts, matrix weighted matrix valued $\mathrm{BMO},$
                           $\mathrm{H}^1$-$\mathrm{BMO}$ duality.
    \end{abstract}

    \section{Introduction}

    This paper deals with matrix weighted extensions of the Fefferman--Stein vector valued inequalities for the Hardy--Littlewood maximal function and the closely connected topic of matrix weighted extensions of the product BMO spaces. Before stating our main results, we briefly recall some historical background on each area.

    The classical Fefferman--Stein vector valued inequalities, first proved by C.~Fefferman and E.~Stein \cite{Fefferman_Stein_1971}, state that for all $1<p,q<\infty$ one has
    \begin{equation}
        \label{classical Fefferman Stein}
        \left\Vert\left(\sum_{k=1}^{\infty}|Mf_k|^{q}\right)^{1/q}\right\Vert_{\Lp{p}(\reals^n)}\leq C(n,p,q)\left\Vert\left(\sum_{k=1}^{\infty}|f_k|^{q}\right)^{1/q}\right\Vert_{\Lp{p}(\reals^n)},
    \end{equation}
    for any sequence $\{f_k\}_{k=1}^{\infty}$ of (say) locally integrable complex valued functions on $\reals^n,$ where $M$ is the usual (uncentered) Hardy--Littlewood maximal function. That means
    \begin{equation*}
        Mf(x)\coloneq \sup_{Q\ni x}\frac{1}{|Q|}\int_{Q}|f(x)|\mathdm(x),\quad x\in\reals^n,
    \end{equation*}
    where the supremum ranges over all cubes $Q$ in $\reals^n$ (with faces parallel to the coordinate hyperplanes), $|E|$ is the Lebesgue measure of a Lebesgue measurable set $E\subseteq\reals^n,$ and $\mathdm(x)$ denotes integration with respect to Lebesgue measure. The constant $C(n,p,q)$ in \eqref{classical Fefferman Stein} depends only on $n,$ $p$ and $q.$ In \cite{Fefferman_Stein_1971} also a weak type version of \eqref{classical Fefferman Stein} is proved.

    The estimate \eqref{classical Fefferman Stein} is actually a special case of more general bounds for \emph{vector valued extensions} of operators. That is, given some (not necessarily linear) operator $T$ acting boundedly on $\Lp{p}(\reals^n)$ for some $1<p<\infty,$ we seek to find those $1<q<\infty$ satisfying an estimate of the form
    \begin{equation}
        \label{vector valued inequalities}
        \left\Vert\left(\sum_{k=1}^{\infty}|Tf_k|^{q}\right)^{1/q}\right\Vert_{\Lp{p}(\reals^n)}\leq C(T,n,p,q)\left\Vert\left(\sum_{k=1}^{\infty}|f_k|^{q}\right)^{1/q}\right\Vert_{\Lp{p}(\reals^n)}
    \end{equation}
    for any sequence $\{f_k\}^{\infty}_{k=1}.$ Such inequalities seem to have been studied for the first time systematically by J.-L.~Rubio de Francia \cite{Rubio_de_Francia_1985}. For a thorough modern exposition of the methods in \cite{Rubio_de_Francia_1985} we refer to \cite{Tao_notes_5_247A} as well as \cite{Grafakos_Book_1}.
    
    The ideas in \cite{Rubio_de_Francia_1985} already hinted at an intimate connection between \emph{extrapolation} and bounds for vector valued extensions as in \eqref{vector valued inequalities}. By an extrapolation problem one understands the following. Given an operator $T$ acting on (suitable) functions on $\reals,$ we assume that for some $1<p<\infty$ it is already known or given that for all weights (that means, a.e.~positive locally integrable functions) $w$ on $\reals^n$ that belong to some class $C(p),$ one has the estimate $\Vert Tf\Vert_{\Lp{p}(w)}\leq C(T,n,p,w)\Vert f\Vert_{\Lp{p}(w)}.$ Given this information, find all $1<q<\infty$ as well as associated classes $C(q)$ of weights on $\reals^n,$ such that for any $w\in C(q)$ one has an estimate of the form $\Vert Tf\Vert_{\Lp{q}(w)}\leq C(T,n,q,w)\Vert f\Vert_{\Lp{q}(w)}.$ Rubio de Francia \cite{Rubio_de_Francia_1984} solved completely the extrapolation problem in the case that $C(p)$ coincides with the \emph{Muckenhoupt $A_p$ class}, that is $w\in A_p$ if and only if
    \begin{equation*}
        [w]_{A_p}\coloneq \sup_{Q}\left(\frac{1}{|Q|}\int_{Q}w(x)\mathdm(x)\right)\left(\frac{1}{|Q|}\int_{Q}w(x)^{-1/(p-1)}\mathdm(x)\right)^{p-1}<\infty,
    \end{equation*}
    where the supremum ranges again over all cubes $Q\subseteq\reals^n.$ In this case, beginning with \emph{any} fixed $1<p<\infty$ and an extrapolation hypothesis holding for all weights $w\in A_p,$ the extrapolation problem is solvable for \emph{any} $1<q<\infty$ and all weights $w\in A_q.$ The extrapolation theorem of Rubio de Francia was subsequently further refined by various authors, until a sharp quantitative version of it was proved by O.~Dragi{\v c}evi{\'c}, L.~Grafakos, M.~C.~Pereyra and S.~Petermichl \cite{Dragicevic2005} (see also \cite{Duoandikoetxea_2011}). A very thorough treatment of various forms of extrapolation with extensive historical background can be found in \cite{Book_Extrapolation}. In fact, this method is so powerful, that it naturally yields \emph{weighted estimates} for vector valued extensions, that is
    \begin{equation}
        \label{weighted vector valued inequalities}
        \left\Vert\left(\sum_{k=1}^{\infty}|Tf_k|^{q}\right)^{1/q}\right\Vert_{\Lp{p}(w)}\leq C(T,n,p,q,w)\left\Vert\left(\sum_{k=1}^{\infty}|f_k|^{q}\right)^{1/q}\right\Vert_{\Lp{p}(w)},
    \end{equation}
    as explained in detail in \cite{Book_Extrapolation}.

    Inequalities of the form \eqref{weighted vector valued inequalities} are a major tool for estimating operators arising naturally when decomposing \emph{biparameter operators} or \emph{bicommutators} in simpler, localized pieces. Such a decomposition for the so called Journe\'e operators was established by H.~Martikainen \cite{Martikainen_2012} (generalizing an analogous decomposition proved earlier by T.~Hyt\"{o}nen \cite{Hytonen_2012} for Calder\'{o}n--Zygmound operators in the context of the solution of the $A_2$ problem). I.~Holmes, S.~Petermichl and B.~Wick \cite{Holmes_Petermichl_Wick_2018} showed that these localized pieces can be estimated in terms of a \emph{weighted product BMO} space. In the following, we recall the relevant definitions.
    
    The classical space $\text{BMO}(\reals^n)$ consists of all locally integrable functions $b$ on $\reals^n$ such that
    \begin{equation*}
        \Vert b\Vert_{\text{BMO}(\reals^n)}\coloneq \sup_{Q}\frac{1}{|Q|}\int_{Q}|b(x)-\langle b\rangle_{Q}|\mathdm(x)<\infty,
    \end{equation*}
    where the supremum is taken over all cubes $Q\subseteq\reals^n,$ and we denote $\langle b \rangle_Q\coloneq \frac{1}{|Q|}\int_{Q}b(x)\mathdm(x).$ The importance of this space is two-fold. First, it is the dual space to the (real variable) Hardy space $\mathrm{H}^1(\reals^n).$ Second, the norm $\Vert b\Vert_{\mathrm{BMO}(\reals^n)}$ is the ``correct'' quantity controlling the boundedness of commutators $[T,b]=[T,M_{b}],$ where $M_{b}$ denotes (pointwise) multiplication by $b$ (called \emph{symbol} of the commutator) and $T$ is a Calder\'{o}n--Zygmound operator. This was proved for the Hilbert transform by Z.~Nehari \cite{Nehari_1957} and in full generality by R.~R.~Coifmann, Rochberg and G.~Weiss \cite{Coifman_Rochberg_Weiss_1976}. Moreover, the John--Nirenberg inequalities are an important intrinsic property of the space $\mathrm{BMO}(\reals^n).$
    
    B.~Muckenhoupt and R.~L.~Wheeden \cite{Muckenhoupt_Wheeden_BMO_1976} considered and studied the weighted BMO norm
    \begin{equation}
        \label{scalar weighted BMO}
        \Vert b\Vert_{\mathrm{BMO}(\nu)}\coloneq \sup_{Q}\frac{1}{\nu(Q)}\int_{Q}|b(x)-\langle b\rangle_{Q}|\mathdm(x),
    \end{equation}
    where the supremum ranges over all cubes $Q\subseteq\reals^n$ and $\nu$ is a $A_2$ weight on $\reals^n.$ A characterization of the two weighted boundedness of commutators in terms of a weighted BMO norm of the symbol was established by S.~Bloom \cite{Bloom_1985} for the Hilbert transform and later for arbitrary Calder\'{o}n--Zygmund operators by I.~Holmes, M.~Lacey and B.~Wick \cite{Holmes_Lacey_Wick_2016}. In the latter work two weighted versions of \eqref{scalar weighted BMO} were introduced and associated John--Nirenberg inequalities were established. These played an important role in the commutator estimates in \cite{Holmes_Lacey_Wick_2016}.

    The study of \emph{biparameter} BMO spaces on product spaces $\reals^n\times\reals^m$ was initiated by S.~Y.~A.~Chang \cite{Chang_1979} and R.~Fefferman \cite{RFefferman_1979}. Here, ``biparameter'' refers to invariance of the considered function spaces under rescaling each coordinate variable of the domain of definition separately. The papers \cite{Chang_1979} and \cite{RFefferman_1979} introduced and investigated the biparameter product BMO space $\text{BMO}(\reals\times\reals)$ consisting of all locally integrable functions $b$ on $\reals^2$ (considered as the product space $\reals\times\reals$) such that
    \begin{equation*}
        \Vert b\Vert_{\text{BMO}(\reals\times\reals)}\coloneq \sup_{\Omega}\bigg(\frac{1}{|\Omega|}\sum_{\substack{R\in\bfD\\ R\subseteq\Omega}}\left|(b,w_{R})\right|^2\bigg)^{1/2}<\infty,
    \end{equation*}
    where the supremum reanges over all (say) bounded Borel subsets $\Omega$ of $\reals^2$ with nonzero measure, $\bfD$ stands for the family of all dyadic rectangles of $\reals^2,$
    and $(w_{R})_{R\in\bfD}$ is some (mildly regular) wavelet system adapted to $\bfD.$ Here and below we denote
    \begin{equation*}
        (b,w_{R})\coloneq \int_{\reals^2}b(x)w_{R}(x)\mathdm(x).
    \end{equation*}
    The aforementioned works \cite{Chang_1979} and \cite{RFefferman_1979} established in particular that $\text{BMO}(\reals\times\reals)$ is the dual to the biparameter Hardy space $\mathrm{H}^1(\reals\times\reals).$ Moreover, a dyadic version of this product BMO space is the correct space for characterizing the boundedness of \emph{bicommutators} $[T_1,[T_2,b]],$ where $T_1,T_2$ are Haar multipliers, as proved by \cite{Blasco_Pott_2005}.

    A weighted version of the Chang--Fefferman product BMO space was introduced and studied by Holmes--Petermichl--Wick \cite{Holmes_Petermichl_Wick_2018} in the context of proving two weight upper bounds for biparameter praproducts. Namely, given a biparameter dyadic grid $\bfD$ in the product space $\reals^n\times\reals^m$ and a biparameter $\bfD$-dyadic $A_2$ weight $w$ on $\reals^n\times\reals^m,$ \cite{Holmes_Petermichl_Wick_2018} considers the \emph{dyadic} Bloom type product space $\BMOprodD(\nu)$ consisting of all locally integrable functions $b$ on $\reals^n\times\reals^m$ with the property
    \begin{equation}
        \label{scalar product BMO}
        \Vert b \Vert_{\BMOprodD(\nu)} \coloneq  \sup_{\Omega} \bigg( \frac{1}{\nu(\Omega)} \sum_{\substack{R\in\bfD(\Omega)\\\e\in\cE}} |b_R^{\e}|^2 \langle \nu^{-1} \rangle_{R} \bigg)^{1/2}<\infty,
    \end{equation}
    where the supremum ranges over all Lebesgue-measurable subsets $\Omega$ of $\reals^n\times\reals^m$ of nonzero finite measure.
    Section \ref{s:background} contains an explanation of the notation used here,
    including the definition of the Haar coefficients $b_R^{\e},$
    which can be found in expression \eqref{eq:HaarCoefDef}.
    In \cite{Holmes_Petermichl_Wick_2018} an $\mathrm{H}^1$-BMO duality type result was established in this setting, which played a crucial role in the proofs of the upper bounds there. More recently, a two-weight version of \eqref{scalar product BMO} was defined in \cite{Kakaroumpas_Soler_2022} and associated John--Nirenberg inequalities were established. These played an important role in \cite{Kakaroumpas_Soler_2022} for characterizing the two weight boundedness of bicommutators with Haar multipliers, extending the aforementioned result of \cite{Blasco_Pott_2005} to the two weight setting.

    \subsection{Main results}

    One of the main goals of this paper is to prove matrix weighted bounds for vector valued extensions of the Christ--Goldberg maximal function, which can be understood as a matrix weighted extension of the classical Fefferman--Stein vector valued inequalities for the Hardy--Littlewood maximal function.

    \begin{theorem}
        \label{thm:FeffermanSteinPointwiseWeightedMaximal}
        Let $1<p<\infty.$ Consider a $(d \times d)$ matrix $A_p$ weight $W$ and a sequence of vector valued functions $\{f_n\}^{\infty}_{n=1}.$
        Then, for each $1 < q < \infty$ it holds that
        \begin{equation*}
            \norm{\left(\sum_{n=1}^\infty |M_W f_n|^q\right)^{1/q}}{\Lp{p}}
            \leq C(n,d,p,q,[W]_{A_p}) \norm{\left(\sum_{n=1}^\infty |W(x)^{1/p} f_n|^q\right)^{1/q}}{\Lp{p}},
        \end{equation*}
        where
        \begin{equation*}
            C(n,d,p,q,[W]_{A_p}) = C(n,d,p,q) [W]_{A_p}^{\max\left\{\frac{1}{q-1},\frac{1}{p-1}\right\}},
        \end{equation*}
        and $M_{W}$ denotes the Christ--Goldberg maximal function corresponding to the weight $W$ and the exponent $p.$
    \end{theorem}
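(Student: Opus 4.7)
The plan is to derive this Fefferman--Stein type estimate from a single scalar matrix-weighted bound by invoking the convex body valued extrapolation theorem of Bownik and Cruz-Uribe, the complex version of which is worked out in the appendix. This mirrors the classical derivation of the scalar Fefferman--Stein inequalities from the $A_p$-$\Lp{p}$ boundedness of $M$ through Rubio de Francia extrapolation, only now the matrix character of $W$ forces one to pass through the convex body valued framework rather than the scalar weighted one.

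As the starting ingredient I would take the (by now standard) sharp Christ--Goldberg estimate
$\norm{M_{W}f}{\Lp{p}(\reals^n)}\leq C(n,d,p)[W]_{A_p}^{1/(p-1)}\norm{W^{1/p}f}{\Lp{p}(\reals^n)}$
valid for every matrix $A_p$ weight $W$ and every $\complex^d$-valued $f.$ The next step is to recast this bound in the form required by convex body extrapolation: given any finite truncation $f_1,\dots,f_N,$ I would associate to it the convex body valued function obtained by pointwise taking the symmetric convex hull of the vectors $f_1(x),\dots,f_N(x),$ and a corresponding convex body built out of $M_{W}f_1(x),\dots,M_{W}f_N(x).$ The scalar Christ--Goldberg bound, applied uniformly to every complex combination of the $f_j,$ yields exactly the scalar hypothesis on pairs of convex body valued functions that is needed to feed the extrapolation theorem.

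Applying the Bownik--Cruz-Uribe result then produces the corresponding bound at an arbitrary exponent $q\in(1,\infty),$ with the quantitative dependence $[W]_{A_p}^{\max\{1/(p-1),1/(q-1)\}}$ emerging from the sharp Duoandikoetxea-style tracking of constants that is built into the extrapolation machinery. The desired $\ell^q$ estimate for the original sequence is then read off from the convex body valued conclusion by using the standard correspondence between the $\ell^q$ norm of a finite tuple of vectors and the gauge functional of the symmetric convex hull they generate, and finally letting $N\to\infty$ by monotone convergence.

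The only genuine obstacle in this scheme is the translation step: one must verify that the scalar Christ--Goldberg bound really does supply the pair hypothesis of convex body extrapolation with the correct quantitative dependence on $[W]_{A_p},$ uniformly across the finite truncations. Once this bookkeeping is in place, the remainder of the argument is essentially a matter of invoking the appendix as a black box and keeping careful track of the resulting exponent of $[W]_{A_p}.$
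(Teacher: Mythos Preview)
Your overall strategy — reduce to a single Christ--Goldberg bound and push it through convex body valued extrapolation — is exactly the paper's route. But the translation step you flag as ``bookkeeping'' is precisely where your proposal breaks down, and it is the genuine content of the argument.

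First, the symmetric convex hull of $f_1(x),\dots,f_N(x)$ is the wrong object: its norm is $\max_j |f_j(x)|$, not $(\sum_j |f_j(x)|^q)^{1/q}$, so there is no ``standard correspondence'' of the kind you invoke. To encode the $\ell^q$ sum one needs the $q$-adapted body
\[
\Sigma_q(\{K_j\})=\Bigl\{\textstyle\sum_j a_j v_j:\ v_j\in K_j,\ \Vert\{a_j\}\Vert_{\ell^{q'}}\le 1\Bigr\},
\]
together with the lemma that $|\Sigma_q(\{K_j\})|\sim_{d,q}(\sum_j|K_j|^q)^{1/q}$; this is the key technical device in the paper and is not a formality. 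Second, $M_W f_j(x)$ is a nonnegative scalar, not a vector in $\complex^d$, so ``a corresponding convex body built out of $M_W f_1(x),\dots,M_W f_N(x)$'' is ill-posed. More seriously, the extrapolation pairs must be weight-independent (the hypothesis is quantified over all $W\in A_{p_0}$), so $M_W$ cannot appear in the definition of the pair. The paper resolves both issues by taking pairs $(M^{\cK}F,F)$ for a \emph{single} $f$, where $M^{\cK}$ is the unweighted convex body maximal operator and $F(x)=\{\lambda f(x):|\lambda|\le 1\}$; the weight then enters only through $|W^{1/p}M^{\cK}F|\sim_d M_W f$. The Christ--Goldberg bound at exponent $q$ verifies the extrapolation hypothesis at $p_0=q$ for these pairs; one then forms $\Sigma_q$ of a sequence of such pairs and extrapolates once more from $q$ to $p$, which is what produces the exponent $\max\{1/(q-1),1/(p-1)\}$.
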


    We refer to Subsection~\ref{subsec:MatrixWeightedFeffermanStein} for a detailed explanation of the notation in \Cref{thm:FeffermanSteinPointwiseWeightedMaximal}. \Cref{thm:FeffermanSteinPointwiseWeightedMaximal} yields readily a similar estimate for the so called \emph{Christ--Goldberg auxiliary maximal operator}, as explained in \Cref{thm:FeffermanSteinReducingWeightedMaximal} below.
    
    We deduce \Cref{thm:FeffermanSteinPointwiseWeightedMaximal} from a general principle for establishing matrix weigh-ted bounds for vector valued extensions of operators acting on convex body valued functions, see \Cref{thm:VectorValuedEstimates} below. Our method for deducing such bounds is inspired from \cite{Book_Extrapolation}: we use an analog of the Rubio de Francia extrapolation theorem for matrix weights proved in \cite{Cruz_Uribe_Bownik_Extrapolation}, coupled with a trick of interpreting vector valued extensions of operators as operators whose values are convex body valued functions. It is worth noting that using the exact same method, the recent limited range extrapolation theorem for matrix weights proved in \cite{Kakaroumpas_Nguyen_Vardakis_2024} yields similar bounds as in \Cref{thm:VectorValuedEstimates} that are valid only for a limited range of exponents.
    
    Here it is important to remark one essential difference between the extrapolation theorem proved in~\cite{Cruz_Uribe_Bownik_Extrapolation} and what we actually use.
    The extrapolation for matrix weights and the other methods and techniques from \cite{Cruz_Uribe_Bownik_Extrapolation} (which also belong to the foundations of the work in \cite{Kakaroumpas_Nguyen_Vardakis_2024}) concern the setting of \emph{real} convex body valued functions. That is, functions taking values on the set of convex bodies in $\reals^d.$
    Our results and the techniques used to achieve them refer to \emph{complex} valued objects, namely $\complex^d$ vector valued functions and \emph{complex} convex body valued functions (taking values on the set of convex bodies in $\complex^d$).
    Most of the steps in translating the results from the real setting to the complex setting are immediate, thanks to the fact that $\complex^d$ and $\reals^{2d}$ share the same structure as topological spaces, metric spaces, measure spaces and real vector spaces.
    However, not all steps are covered by these considerations.
    In particular, the main difficulty is that it is not clear that,
    given a measurably parametrized family of norms over $\complex^d$
    (such as the ones defined by a matrix weight), there is a measurable choice of a reducing operator for such a family.
    This was already proved separately in~\cite{DKPS2024}.
    Nonetheless, in Appendix~\ref{s:appendix} we cover the proof of Theorem~\ref{thm:Extrapolation}.
    The more obvious steps are only mentioned together with the reason why they hold
    exactly as in the real variable case,
    while we devote more detail to the steps that are not immediate.

    An immediate application of \Cref{thm:FeffermanSteinPointwiseWeightedMaximal}
    and \Cref{thm:FeffermanSteinReducingWeightedMaximal}, which is the analogue for the Christ--Goldberg auxiliary maximal operator, is the following.
    In~\cite{DKPS2024}, the authors show matrix weighted $\Lp{p}$ bounds
    for Journé operators in the biparameter setting.
    However, the bounds in that article are only complete in the case $p = 2.$
    Even though that article also contains bounds for all $1 < p < \infty,$
    for $p \neq 2$ these only hold for the particular class of paraproduct free
    Journé operators.
    The general bounds can be obtained using extrapolation for biparameter matrix weights
    (see~\cite{Vuorinen2024}).
    Nonetheless, one can also apply \Cref{thm:FeffermanSteinPointwiseWeightedMaximal}
    and \Cref{thm:FeffermanSteinReducingWeightedMaximal}
    as it was outlined already in~\cite[Section~8]{DKPS2024} to get complete bounds
    for all biparameter Journé operators,
    avoiding in this way the use of extrapolation for biparameter matrix weights.
    
    In this paper we focus on the application of \Cref{thm:FeffermanSteinPointwiseWeightedMaximal} for setting up the foundations of a theory of two matrix weighted product BMO. Namely, let $\bfD=\cD^1\times\cD^2$ be any product dyadic grid in the product space $\reals^n\times\reals^m.$ Let $1<p<\infty,$ and let $U,V$ be biparameter $(d\times d)$ matrix $\bfD$-dyadic $A_p$ weights on $\reals^n\times\reals^m.$ Let $B=\{B_{R}^{\e}\}_{\substack{R\in\bfD\\\e\in\cE}}$ be any sequence in $\mathrm{M}_{d}(\complex).$ We define
    \begin{equation*}
        \Vert B\Vert_{\BMOprodD(U,V,p)}\coloneq \sup_{\Omega}\frac{1}{|\Omega|^{1/2}}\bigg(\sum_{\substack{R\in\bfD(\Omega)\\\e\in\cE}}|\cV_{R}B_{R}^{\e}\cU_{R}^{-1}|^2\bigg)^{1/2},
    \end{equation*}
    where the supremum ranges over all Lebesgue-measurable subsets $\Omega$ of $\reals^{n+m}$ of nonzero finite measure. This definition is an extension of one of the equivalent definitions for the space of two matrix weighted one-parameter BMO, whose study was initiated in \cite{Isralowitz_Kwon_Pott2017}, \cite{Isralowitz_2017} and culminated in \cite{Isralowitz_Pott_Treil_2022}. Moreover, for every sequence $\Phi=\{\Phi_{R}^{\e}\}_{\substack{R\in\bfD\\\e\in\cE}}$ in $\mathrm{M}_{d}(\complex),$ we define
    \begin{equation*}
        \Vert\Phi\Vert_{\mathrm{H}^1_{\bfD}(U,V,p)}\coloneq \bigg\Vert\bigg(\sum_{\substack{R\in\bfD\\\e\in\cE}}|V^{-1/p}\Phi_{R}^{\e}\cU_{R}|^2\frac{\1_{R}}{|R|}\bigg)^{1/2}\bigg\Vert_{\Lp{1}(\reals^{n+m})},
    \end{equation*}
    which is the direct biparameter analog of the one parameter two matrix weighted $\mathrm{H}^1$ norm from \cite{Isralowitz_2017}.
    In this context, we prove the following result.

    \begin{theorem}
        \label{thm:H1BMOduality}
        Let $U,V,p,\bfD$ be as above. For any $B\in\emph{BMO}_{\emph{prod},\bfD}(U,V,p),$ the linear functional $\ell_{B}:\mathrm{H}^1_{\bfD}(U,V,p)\rightarrow\complex$ given by
        \begin{equation*}
            \ell_{B}(\Phi)\coloneq \sum_{\substack{R\in\bfD\\\e\in\cE}}\mathrm{tr}((B_{R}^{\e})^{\ast}\Phi_{R}^{\e}),\quad \Phi\in \mathrm{H}^1_{\bfD}(U,V,p)
        \end{equation*}
        is well-defined and bounded with $\Vert\ell_{B}\Vert\sim\Vert B\Vert_{\emph{BMO}_{\emph{prod},\bfD}(U,V,p)},$ where the implicit constants depend only on $n,m,p,d$ and $[V]_{A_p,\bfD}.$ Conversely, for every bounded linear functional $\ell$ on $\mathrm{H}^1_{\bfD}(U,V,p)$ there is $B\in\emph{BMO}_{\emph{prod},\bfD}(U,V,p)$ with $\ell=\ell_{B}.$
    \end{theorem}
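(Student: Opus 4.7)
The proof naturally splits into the two implications. Throughout, we may (and do) take the reducing matrices $\mathcal{U}_R, \mathcal{V}_R$ Hermitian positive.

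\textbf{Boundedness of $\ell_B$.} Via cyclicity of the trace one has
$$\mathrm{tr}((B_R^\e)^* \Phi_R^\e) = \mathrm{tr}\big((\mathcal{V}_R B_R^\e \mathcal{U}_R^{-1})^*\, \mathcal{V}_R^{-1} \Phi_R^\e \mathcal{U}_R\big),$$
and Cauchy--Schwarz in the Hilbert--Schmidt inner product gives the pointwise bound $|\mathrm{tr}((B_R^\e)^* \Phi_R^\e)| \leq |\mathcal{V}_R B_R^\e \mathcal{U}_R^{-1}|\, |\mathcal{V}_R^{-1} \Phi_R^\e \mathcal{U}_R|$. Summation then reduces matters to a scalar Carleson-type estimate in which the BMO-side and $\mathrm{H}^1$-side coefficients have been cleanly separated. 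I would next run a biparameter Chang--Fefferman stopping time on the dyadic level sets of an appropriate averaged square function of $\Phi$, and invoke a Journé-type enlargement of each level set whose measure stays comparable thanks to matrix-weighted strong maximal function bounds. These maximal function bounds are in turn reduced, via iteration over each parameter, to the Fefferman--Stein inequalities of \Cref{thm:FeffermanSteinPointwiseWeightedMaximal}. On each level set the BMO condition supplies the desired $\ell^2$ control of the $B$-coefficients, while the $\mathrm{H}^1$ norm absorbs the summed measures of the level sets.

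\textbf{Representation of dual functionals.} The $\mathrm{H}^1_{\bfD}(U,V,p)$ norm is an $\mathrm{L}^1$-norm of the $\ell^2$ magnitude of the vector-valued function
$$x \longmapsto F_\Phi(x) \coloneq \Big( |R|^{-1/2}\1_R(x)\, V(x)^{-1/p}\, \Phi_R^\e\, \mathcal{U}_R \Big)_{R, \e},$$
so $\Phi \mapsto F_\Phi$ is an isometric embedding of $\mathrm{H}^1_{\bfD}(U,V,p)$ into $\Lp{1}(\reals^{n+m}; \lp{2}(\bfD\times\cE; \matrices{d}{\complex}))$. Given a bounded linear functional $\ell$ on $\mathrm{H}^1_{\bfD}(U,V,p),$ I would extend it by Hahn--Banach to the ambient $\Lp{1}(\lp{2})$ space without increasing the norm and use the classical $\Lp{1}(\lp{2})^* = \Lp{\infty}(\lp{2})$ duality to obtain a representing $\Lp{\infty}$-section. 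Testing $\ell$ against atoms $\Phi$ whose only non-zero entry is concentrated at a single $(R,\e)$ identifies matrix coefficients $\{B_R^\e\}$ such that $\ell=\ell_B$. Finally, using the local BMO testing condition together with the $\Lp{\infty}$ bound on the section, one recovers the required BMO estimate on $B$ by unwinding the embedding on each test set $\Omega$.

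\textbf{Main obstacle.} The delicate point is the stopping-time phase of the upper bound: producing biparameter enlargements of level sets of the square function whose measures remain comparable to the originals. This is precisely where \Cref{thm:FeffermanSteinPointwiseWeightedMaximal} is indispensable — the vector-valued matrix weighted Fefferman--Stein inequalities supply the strong maximal function control that replaces, in this setting, the scalar maximal function bound used in the classical Chang--Fefferman argument.
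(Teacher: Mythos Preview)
Your overall two-part structure is right, and the opening trace manipulation is essentially the same as the paper's (the paper passes to $\mathcal{V}_R'$ rather than $\mathcal{V}_R^{-1}$, but these are comparable by \Cref{l: replace inverse by prime}). However, you have misidentified the key ingredient in both directions, and in particular your ``Main obstacle'' paragraph is wrong: \Cref{thm:FeffermanSteinPointwiseWeightedMaximal} is \emph{not used at all} in the paper's proof of \Cref{thm:H1BMOduality}.

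\textbf{Upper bound.} The level-set decomposition is indeed Chang--Fefferman style, but the enlargement $\widetilde{\Omega}_k=\{M_{\bfD}(\1_{\Omega_k})>1/2\}$ uses only the \emph{unweighted} strong dyadic maximal function, so $|\widetilde{\Omega}_k|\sim|\Omega_k|$ is classical and no matrix weight enters there. The genuine matrix-weighted difficulty is elsewhere: after Cauchy--Schwarz you must bound $\sum_{R\in\cB_k,\e}|\cV'_R\Phi_R^\e\cU_R|^2$ by $2^{2k}|\widetilde{\Omega}_k|$. The $\mathrm{H}^1$ square function $F$ involves $V^{-1/p}$ pointwise, whereas the sum involves the reducing operator $\cV'_R$; to connect them one compares $|\cV'_R v|$ with the average of $|V^{-1/p}v|$ over $R\setminus\Omega_{k+1}$, a set carrying only half the measure of $R$. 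That comparison is where $[V]_{A_p,\bfD}$ enters, via the fact that the \emph{scalar} weight $w=|V^{-1/p}v|^{p'}$ lies in biparameter $A_{p'}$ uniformly in $v$, so $w^{1/p'}\in A_2$ gives the needed $A_\infty$-type estimate $w^{1/p'}(R\setminus\Omega_{k+1})\gtrsim w^{1/p'}(R)$. Your sketch does not address this step, which is the actual heart of the argument.

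\textbf{Representation.} Your Hahn--Banach route into $\Lp{1}(\lp{2})$ is plausible in outline but differs from the paper and is vague at the crucial point. The paper instead defines $B_R^\e$ directly by testing $\ell$ on $h_R^\e E_{ij}$, then proves the BMO bound in its John--Nirenberg $L^{p'}$ form via unweighted $\Lp{p}$--$\Lp{p'}$ duality and Littlewood--Paley, reducing to $\Vert\hat{A}\Vert_{\mathrm{H}^1_{\bfD}(U,V,p)}\lesssim|\Omega|^{1/p'}\Vert A\Vert_{\Lp{p}}$ for a transformed sequence $\hat{A}$. The matrix-weighted input here is control of $N_\Omega(x)=\sup_{R\in\bfD(\Omega)}|V(x)^{-1/p}\cV_R|\1_R(x)$ in $L^{p'}$, obtained from the \emph{strong dyadic Christ--Goldberg maximal function} bound for $V'$ (equation \eqref{strong dyadic Christ-Goldberg}), again not from \Cref{thm:FeffermanSteinPointwiseWeightedMaximal}. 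Your ``unwinding the embedding on each test set $\Omega$'' does not explain how an $\Lp{\infty}(\lp{2})$ bound on a general Hahn--Banach extension yields the specific $\ell^2$-over-$\bfD(\Omega)$ control with the reducing operators $\cV_R,\cU_R^{-1}$ required by the BMO norm.
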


    We note that our proof of Theorem \ref{thm:H1BMOduality} trivially works also in the one-parameter setting, thus answering to the positive the question posed in \cite{Isralowitz_2017} about the extension of the one parameter two matrix weighted $\mathrm{H}^1$-BMO duality proved there from $p=2$ to arbitrary exponents $1<p<\infty.$

    The duality result of Theorem \ref{thm:H1BMOduality} coupled with Theorem \ref{thm:FeffermanSteinPointwiseWeightedMaximal} yields readily two-matrix weighted bounds for biparameter paraproducts, see Proposition \ref{prop:pureparaproducts} and Proposition \ref{prop:mixedparaproducts} below. These in turn yield some two matrix weighted upper bounds for bicommutators, see Subsection~\ref{subsec:bicommutators} below.

    The remainder of this article is structured as follows.
    In Section~\ref{s:background}, we review the objects that we work with
    and their relevant properties for our results.
    Section~\ref{sec:extrapolation_vector_valued} is focused on vector valued estimates.
    There we state the relevant result of~\cite{Cruz_Uribe_Bownik_Extrapolation} in the complex setting,
    we outline very briefly its proof (deferring the more technical details to the Appendix),
    and then we use it to show vector valued estimates for general families of extrapolation pairs.
    This general result is followed by vector valued estimates for matrix weighted maximal operators.
    Namely, we show such estimates both for the Christ--Goldberg maximal operator
    and for the Christ--Goldberg auxiliary maximal operator.
    In Section~\ref{s:MatrixWeighedProductBMO} we develop a two-matrix weighted $\mathrm{BMO}$ theory.
    First, we define a two-matrix weighted matrix $\mathrm{BMO}$ space,
    and then we show that it is dual to a two-matrix weighted matrix $\mathrm{H}^1$
    space already considered in the literature.
    Next, we use the previously developed vector valued estimates to prove
    norm bounds for biparameter matrix paraproducts,
    relating them to the $\mathrm{BMO}$ norm of their symbol.
    We conclude that section with a sketch on how our results can be also
    applied to obtaining bounds for bicommutators with Haar multipliers and
    multiplication with a matrix symbol.
    Lastly, in Appendix~\ref{s:appendix} we include the more technical details
    on how to obtain the complex version of Bownik and Cruz-Uribe extrapolation
    for matrix weights found in~\cite{Cruz_Uribe_Bownik_Extrapolation}.

    \subsection*{Acknowledgements}
    The authors are grateful to S.~Treil for suggesting the interpretation of the $q$-function as an operator whose values are convex body valued functions for any $1<q<\infty,$ inspiring the interpretation of vector valued extensions of operators as operators whose values are convex body valued functions used in this paper.
    In addition, the authors are indebted to both Marcin Bownik and David Cruz-Uribe
    for carefully reading this article and for providing valuable feedback
    that improved the clarity and readability of this work.
    Finally, the authors are also thankful to the anonymous referee,
    who made very useful suggestions and comments to improve the clarity of this article.

    \section{Background}
    \label{s:background}

  
    \subsection{Notation for dyadic grids and Haar systems}
    We briefly review the notation that we use for product dyadic grids.
    Consider two different dyadic grids $\cD^1$ and $\cD^2$ on $\reals^n$ and $\reals^m$ respectively;
    that is, both $\cD^1$ and $\cD^2$ are collections of dyadic \emph{cubes}.
    Then we say that $\bfD$ is the \emph{product dyadic grid} given by $\cD^1$ and $\cD^2$ on $\reals^{n} \times \reals^m,$
    denoted by $\bfD = \cD^1 \times \cD^2,$
    if $\bfD$ is the collection of \emph{dyadic rectangles} of the form $R_1 \times R_2$ with $R_j \in \cD^j,$ $j = 1,2.$

    Given a dyadic interval $I$ on $\reals,$ we denote the cancellative and noncancelative Haar functions on $I$ by
    \begin{equation*}
        h^{0}_{I}\coloneq \frac{\1_{I_{+}}-\1_{I_{-}}}{\sqrt{|I|}},\qquad h^{1}_{I}\coloneq \frac{\1_{I}}{\sqrt{|I|}}.
    \end{equation*}
    Consider a dyadic grid $\cD$ on $\reals^n$ and let $\cE = \{0,1\}^n \setminus \{(1,\ldots,1)\}.$
    The Haar basis on $\reals^n$ adapted to $\cD$ is the set of functions indexed by $Q \in \cD$ and $\e \in \cE$ given by
    \begin{equation*}
        h^{\e}_{Q}(x) \coloneq  h^{\e_1}_{I_1}(x_1) \dots h^{\e_n}_{I_n}(x_n),
    \end{equation*}
    where $Q = I_1 \times \dots \times I_n$ ($\cE$ is called the signature set of signatures on $\reals^n$).
    Similarly, in the biparameter context, consider a product dyadic grid $\bfD = \cD^1 \times \cD^2$ and
    the set of biparameter signatures $\cE = \cE^1 \times \cE^2,$ where $\cE^1$ and $\cE^2$ are respectively the signature sets on $\reals^n$ and $\reals^m.$
    The biparameter Haar basis on $\reals^{n} \times \reals^m$ adapted to $\bfD$ is the set of functions
    \begin{equation*}
        h^{\e}_{R}(x) \coloneq  h^{\e_1}_{R_1}(x_1) \cdot h^{\e_2}_{R_2}(x_2),
    \end{equation*}
    with $R = R_1 \times R_2 \in \bfD$ and $\e = (\e_1,\e_2) \in \cE.$

    For a given locally integrable function $f,$ we will denote the Haar coefficient of $f$ corresponding to $h^{\e}_{R}$ by $f^{\e}_R$
    (with analogous notation for the one parameter setting).
    In other words, it is the coefficient defined by
    \begin{equation}
        \label{eq:HaarCoefDef}
        f^{\e}_R =  (f, h^{\e}_{R}).
    \end{equation}
    We will denote the average of $f$ on a rectangle $R$ by $\langle f \rangle_R.$

    \subsection{Reducing operators}
    \label{subsec:ReducingOperators}
    Let $1<p<\infty$. Let $E$ be a bounded measurable subset of $\reals^n$ of nonzero measure. Let $W$ be an integrable $\mathrm{M}_{d}(\complex)$-valued function on $E$ taking a.e.~positive-definite values.
    Here, by a positive-definite matrix $M \in \mathrm{M}_{d}(\complex)$
    we mean a matrix such that $\langle M e, e \rangle \geq 0$ for every $e \in \complex^d.$
    In particular, since we are focusing on the space $\complex^d$, note that this implies that a positive-definite matrix $M$ is also Hermitian.
    It is proved in \cite[Proposition 1.2]{Goldberg_2003} that there exists a (not necessarily unique) positive-definite matrix $\cW_{E}\in \mathrm{M}_{d}(\complex)$, called \emph{reducing operator} of $W$ over $E$ with respect to the exponent $p$, such that
    \begin{equation*}
        \langle|W^{1/p}e|\rangle_{E,p}\leq
        |\cW_{E}\, e| \leq
        \sqrt{d}\langle|W^{1/p}e|\rangle_{E,p}
    \end{equation*}
    for all $e \in \complex^d,$ where $\langle f\rangle_{E,p}$ denotes the $\mathrm{L}^{p}$ average of a scalar valued function $f$ over $E$. If the function $W' \coloneq W^{-1/(p-1)}$ is also integrable over $E,$ then we let $\cW'_{E}$ be the reducing matrix of $W'$ over $E$ corresponding to the exponent $p' \coloneq p/(p-1)$. For a detailed exposition of reducing operators we refer for example to \cite{DKPS2024}.
        

    
    Let $E,F$ be bounded measurable subsets of $\reals^n,\reals^m$ respectively of nonzero measure. Let $1<p<\infty.$ Let $W$ be an integrable $\mathrm{M}_{d}(\complex)$-valued function on $E\times F$ taking a.e.~positive definite values. For a.e.~$x_1\in E,$ denote by $\cW_{x_1,F}$ the reducing operator of $W_{x_1}(x_2)\coloneq W(x_1,x_2)$ over $F$ with respect to the exponent $p.$
    It is proved in \cite{DKPS2024} (see also \cite{Cruz_Uribe_Bownik_Extrapolation}) that one can choose the reducing operator $\cW_{x_1,F}$ in a way that is measurable in $x_1.$
    Moreover, it is shown in \cite{DKPS2024} that
    \begin{equation}
        \label{iterate reducing operators}
        |\cW_{F,E}e|\sim_{p,d}|\cW_{E\times F}e|,\qquad\forall e\in\complex^d,
    \end{equation}
    where $\cW_{F,E}$ is the reducing operator of $W_{F}(x_1)\coloneq \cW_{x_1,F}^{p}$ over $E$ with respect to the exponent $p,$ and $\cW_{E\times F}$ is the reducing operator of $W$ over $E\times F$ with respect to the exponent $p.$

    \subsection{Matrix \texorpdfstring{$A_p$}{Ap} weights}
    \label{sec:MatrixAp}

    A function $W$ on $\reals^n$ is said to be a \emph{$d\times d$-matrix valued weight,} or just a matrix weight, if
    it is a locally integrable $\mathrm{M}_{d}(\complex)$-valued function such that $W(x)$ is a positive-definite matrix for a.e.~$x\in\reals^n.$ Given $1 < p < \infty$ we define the norm
    \begin{equation*}
        \Vert F\Vert_{\Lp{p}(W)} \coloneq \Vert |W^{1/p}F|\Vert_{\mathrm{L}^{p}}
    \end{equation*}
    for all $\mathrm{M}_{d}(\complex)$-valued measurable functions $F$ on $\reals^n.$
    For $p = \infty,$ this definition needs a slight modification.
    Namely, we define the norm
    \begin{equation*}
        \Vert F\Vert_{\Lp{\infty}(W)} \coloneq \Vert |WF|\Vert_{\mathrm{L}^{\infty}}
    \end{equation*}
    for the same class of matrix valued functions as before.

    In this work we will be considering in particular \emph{one-parameter}, respectively \emph{biparameter ($d\times d$) matrix valued $A_p$ weights} on $\reals^n,$ respectively on $\reals^n\times\reals^m,$ for $1<p<\infty.$
    Following~\cite{Roudenko2002}, we say that a matrix weight on $\reals^n$ is a one-parameter $A_p$ weight if
    \begin{equation}
        \label{eq:RoudenkoCondition}
        \sup_{R}\frac{1}{|R|}\int_{R}\left(\frac{1}{|R|}\int_{R}|W(x)^{1/p}W(y)^{-1/p}|^{p'}\,\mathrm{d}y\right)^{p/p'}\,\mathrm{d}x
        < \infty,
    \end{equation}
    where the supremum ranges over all bounded cubes $R$ with sides parallel to the axes.
    We say that a matrix weight on $\reals^n \times \reals^m$ is a biparameter
    $A_p$ weight if the same condition holds
    when the supremum ranges over rectangles $R$  which are cartesian products
    of cubes on $\reals^n$ and on $\reals^m$ with sides parallel to the axes.
    For $p = 1,$ note that condition~\eqref{eq:RoudenkoCondition} becomes
    \begin{equation*}
        \sup_{R} \esssup_{x \in R} \frac{1}{|R|}\int_{R} |W(x)^{-1}W(y)|\,\mathrm{d}y
        < \infty,
    \end{equation*}
    with the same considerations as before distinguishing the one-parameter and biparameter cases.
    For $p = \infty,$ following~\cite{Cruz_Uribe_Bownik_Extrapolation},
    we say that a matrix weight is an $A_\infty$ weight if
    \begin{equation*}
        \sup_{R} \esssup_{x \in R} \frac{1}{|R|}\int_{R} |W(x)W(y)^{-1}|\,\mathrm{d}y
        < \infty,
    \end{equation*}
    with identical considerations distinguishing the one-parameter and biparameter cases.
    For a more detailed explanation for these definitions
    and their relation to $A_p$ norm functions, see~\cite[Section~6]{Cruz_Uribe_Bownik_Extrapolation}.
    Also, for a general overview on the topics of matrix weights,
    convex body valued functions, matrix extrapolation
    and their relation to the scalar theory, we refer the reader to
    \cite{CruzUribe2025}.
    
    In the second part of this work, dyadic versions of these weights adapted to dyadic grids in $\reals^n,$ respectively product dyadic grids in $\reals^n\times\reals^m,$ will also be of major importance. Since the relevant definitions are by now standard, we omit repeating them and instead refer the reader to the detailed exposition in \cite[Subsection 3.3]{DKPS2024}. Let us only stress for later convenience the following estimates from \cite{DKPS2024}. Let $\bfD=\cD^1\times\cD^2$ be a product dyadic grid in $\reals^n\times\reals^m$ and let $W$ be a $(d\times d)$ matrix $\bfD$-dyadic biparameter $A_p$ weight on $\reals^n\times\reals^m.$
    Then
    \begin{equation}
        \label{uniform slicing variable}
        [W_{x_1}]_{A_p,\cD^2}\lesssim_{p,d}[W]_{A_p,\bfD},\quad\text{for a.e. }x_1\in\reals^n.
    \end{equation}
    Moreover, if $Q$ is any cube in $\cD^2,$ then
    \begin{equation}
        \label{uniform domination of characteristics of averages}
        [W_{Q}]_{A_p,\cD^1}\lesssim_{p,d}[W]_{A_p,\bfD}.
    \end{equation}

    \subsection{Convex body valued functions}
    \label{sec:ConvexBodyValuedFunctions}
    Here we consider functions taking values in the collection of closed bounded symmetric convex sets of $\complex^d.$
    Take into account that by symmetric here we mean \emph{complex symmetric}.
    That is, a set $A \subseteq \complex^d$ is complex symmetric (or just symmetric in this article) if
    for every $u \in A$ and every $\lambda \in \complex$ with $|\lambda| = 1$ it is also the case that $\lambda u \in A.$
    Of course, if the set $A$ is convex in addition to symmetric, it will also be the case that,
    for every $u \in A$ and every $\lambda \in \complex$ with $|\lambda| \leq 1,$ also $\lambda u \in A.$ In other words, the ``symmetric convex sets'' as we have defined them are precisely the balanced convex sets.
    We will denote the set of closed subsets of $\complex^d$ by $\cK(\complex^d),$
    or just $\cK$ when the dimension of the ambient space is clear by the context.
    In addition, we define a \emph{convex body} to be a closed bounded convex and symmetric subset of $\complex^d.$
    We will use the symbol $\convex(\complex^d)$ to denote the set of convex bodies on $\complex^d$
    and, whenever the dimension of the ambient space is unambiguous, simply by $\convex.$
    Recall that if $K \in \convex(\complex^d)$ has nonempty interior,
    then there exists a unique ellipsoid $E \subseteq K$ with maximal volume.
    This ellipsoid is called the \emph{John ellipsoid} and it satisfies
    \begin{equation*}
        E \subseteq K \subseteq \sqrt{d} E.
    \end{equation*}
    
    We focus now on functions $F\colon \reals^n \rightarrow \convex(\complex^d)$
    and we gather the definitions and basic properties that we will need for our results.
    Some of these definitions can be found in more general forms in the texts that we cite;
    we will restrict though to the cases that are of interest to us to avoid an excess of concepts.
    For an introduction to such functions, their properties and how to define their integrals
    see~\cite{Cruz_Uribe_Bownik_Extrapolation, Cruz_Uribe_Extrapolation} and
    for a detailed exposition see~\cite{Aubin_2009}.

    Since throughout the article we will consider both functions taking values in $\complex^d$
    and functions taking values in $\convex,$
    we will use a typographic convention to avoid confusion.
    We will denote functions taking values in $\complex^d$ with lowercase letters $f,g,h,\ldots$
    On the other hand, we will use uppercase letters $F,G,H,\ldots$ to denote functions taking values in $\convex(\complex^d).$
    In any case, we will also explicitly state the target space of the functions we use.

    Given a set $K \subseteq \complex^d,$ let us define its norm by
    \begin{equation*}
        |K| \coloneq \sup \{|v|\colon v \in K\}.
    \end{equation*}
    For a matrix $A$ we will denote its usual norm (given by its largest singular value) by $|A|.$
    The action of matrix weights on convex body valued functions is given by the next definition.
    Given a convex body $K$ and a positive definite matrix $A,$ define the product
    \begin{equation*}
        A K \coloneq \{A u\colon u \in K\}
    \end{equation*}
    and observe that $AK$ will also be a convex body.

    We will say that a function $F\colon \reals^n \rightarrow \convex(\complex^d)$ is \emph{measurable}
    if for every open set $U \subseteq \complex^d$ it holds that the set
    \begin{equation*}
        F^{-1}(U) \coloneq \{x\in\reals^n\colon F(x) \cap U \neq \emptyset\}
    \end{equation*}
    is measurable (in the sense of Lebesgue).
    A convex body valued function $F$ is measurable if and only if there exists a sequence $\{f_k\}_{k \geq 1}$
    of measurable functions $f_k\colon \reals^n \rightarrow \complex^d$ such that
    \begin{equation}
        \label{eq:SelectionFunctionMeasurability}
        F(x) = \overline{\{f_k(x)\colon k \geq 1\}}
    \end{equation}
    for every $x \in \reals^n$ (see~\cite[Theorem~8.1.4]{Aubin_2009}).
    The functions that form such sequences will be called \emph{selection functions} for $F.$
    In addition, the set of all selection functions for $F$ is denoted by $S^0(F).$
    Observe that this is the same as the set of all measurable functions $f$ such that $f(x) \in F(x)$
    for every $x \in \reals^n.$
    Here we restrict to functions taking values on $\convex(\complex^d)$ due to the applications that we consider later.
    Nonetheless, the previous definitions and concepts apply verbatim to functions taking values on $\cK(\complex^d).$
    When one is interested in the norm of $F,$ it is possible to restrict to selection functions, as the following lemma shows
    (see~\cite[Lemma~3.9]{Cruz_Uribe_Bownik_Extrapolation}).
    We include its proof for completeness, although the arguments of Bownik and Cruz-Uribe
    are equally valid for complex convex bodies in this case.
    \begin{lemma}
        \label{lemma:ModulusSelectionFunction}
        Consider a measurable function $F\colon \reals^n \rightarrow \convex(\complex^d).$
        Then there exists $f \in S^0(F)$ such that
        \begin{equation*}
            |f(x)| = |F(x)|
        \end{equation*}
        for all $x \in \reals^n.$
    \end{lemma}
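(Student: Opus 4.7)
The plan is to reduce the claim to a standard measurable selection result applied to the ``argmax'' multifunction of $|\var|$ over $F(\var)$. Write $F(x)=\overline{\set{f_k(x)\colon k\geq 1}}$ for a sequence $\set{f_k}_{k\geq 1}$ of measurable $\complex^d$-valued selections of $F$, as given by \eqref{eq:SelectionFunctionMeasurability}. Since $|\var|$ is continuous on $\complex^d$, one has $|F(x)|=\sup_{k\geq 1}|f_k(x)|$, and hence the scalar function $x\mapsto |F(x)|$ is measurable as a countable supremum of measurable functions.

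Next I would produce measurable almost-maximizing selections. For each $m\geq 1$, set
\begin{equation*}
    N_m(x)\coloneq \min\set{k\geq 1\colon |f_k(x)|\geq |F(x)|-1/m},
\end{equation*}
which is a well-defined $\integer$-valued measurable function because $|F(x)|$ is the supremum of the $|f_k(x)|$. Setting $g_m(x)\coloneq f_{N_m(x)}(x)$ gives a measurable selection $g_m\in S^0(F)$ satisfying $|g_m(x)|\geq |F(x)|-1/m$ for every $x\in\reals^n$. Because $F(x)$ is closed and bounded in the finite-dimensional space $\complex^d$, it is compact, so $\set{g_m(x)}_{m\geq 1}$ has at least one cluster point in $F(x)$, and every such cluster point $v$ satisfies $|v|=|F(x)|$.

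To extract a single measurable selection $f$ with $|f(x)|=|F(x)|$ everywhere, I would consider the multifunction
\begin{equation*}
    G(x)\coloneq \set{v\in F(x)\colon |v|=|F(x)|}=\bigcap_{m=1}^{\infty}F(x)\cap\set{v\in\complex^d\colon |v|\geq |F(x)|-1/m},
\end{equation*}
which takes nonempty closed (in fact compact) values. The measurability of $F$ together with the measurability of the scalar $|F(\var)|$ shows that $G$ is itself a measurable multifunction, so a standard measurable selection theorem in the separable metric space $\complex^d$ (such as Kuratowski--Ryll-Nardzewski or Aumann's theorem, see \cite{Aubin_2009}) yields the desired measurable $f\in S^0(G)\subseteq S^0(F)$ with $|f(x)|=|F(x)|$.

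The main obstacle is the careful verification that the multifunction $G$ is measurable: one must transfer the measurability of $F$ and of the scalar function $|F(\var)|$ through the intersection above, which is standard but somewhat technical. Once this is in place, invoking the selection theorem is routine. As the authors emphasize, the argument of Bownik and Cruz-Uribe for real convex bodies carries over verbatim to the complex setting, since $\complex^d$ is isometric to $\reals^{2d}$ as a metric space and measurability of multifunctions is a purely metric notion.
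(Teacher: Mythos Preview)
Your argument is correct. Both you and the paper first establish that $x\mapsto|F(x)|$ is measurable and then form the argmax multifunction $G(x)=\set{v\in F(x)\colon |v|=|F(x)|}$, verifying its measurability via intersection results from \cite{Aubin_2009} (the paper writes this directly as $F(x)\cap |F(x)|\complexsphere{d}$ rather than as your countable intersection, but this is cosmetic). The routes diverge at the selection step: you invoke the Kuratowski--Ryll-Nardzewski theorem to extract a measurable selection of $G$, whereas the paper avoids any abstract selection theorem and instead performs an explicit iterated construction, successively intersecting $G(x)$ with hyperplanes where each complex coordinate has maximal modulus and lies on the positive real axis, until a singleton remains. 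Your approach is shorter and conceptually cleaner once one is willing to import KRN; the paper's approach is more elementary and self-contained. As a minor remark, your Step~2 (the almost-maximizing selections $g_m$) is superfluous: nonemptiness of $G(x)$ is immediate from the compactness of $F(x)\subset\complex^d$, and the $g_m$ play no role in the measurability verification.
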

    \begin{proof}
        Through this proof, we identify $\complex^d$ with $\reals^{2d}$
        in the usual way.
        Take the sequence $\{f_k\}$ of selection functions satisfying~\eqref{eq:SelectionFunctionMeasurability}.
        Then, the function given by
        \begin{equation*}
            g_0(x) = \sup\{|v|\colon v \in F(x)\} = \sup\{|f_k(x)|\colon k \geq 1\}
        \end{equation*}
        is also measurable.
        This allows us to define the function
        \begin{equation*}
            F_0(x) = \{v \in F(x)\colon |v| = g_0(x)\} = F(x) \cap (g_0(x)\complexsphere{d})
        \end{equation*}
        taking values on $\cK(\complex^d),$
        where $\complexsphere{d} = \{v \in \complex^d\colon |v| = 1\}$ denotes the complex
        $(d-1)$-dimensional sphere.
        Since both $F$ and $g_0 \complexsphere{d}$ are measurable, so is $F_0$
        (see~\cite[Theorems~8.2.2 and~8.2.4]{Aubin_2009}).

        It is only left to choose $v(x) \in F_0(x)$ in a measurable way.
        This is done by choosing maximal vectors $v$ in every other real coordinate iteratively (in the $\reals^{2d}$ sense).
        Let $P_1$ be the (continuous) projection onto the first complex coordinate and define
        \begin{equation*}
            g_1(x) = \sup_k |P_1(f_k(x))|.
        \end{equation*}
        The function $g_1\colon \Omega \rightarrow [0,\infty)$ is then measurable.
        Next, use this function and the symmetry of $F(x)$ to define $F_1\colon \Omega \rightarrow \cK(\complex^d)$ as
        \begin{equation*}
            F_1(x) = \{v \in F_0(x)\colon P_1(v) = g_1(x)\}
                   = F_0(x) \cap (\{g_1(x)\} \times \complex^{d-1}).
        \end{equation*}
        As before, $F_1$ is also a measurable function.
        Assume that we have defined $F_j$ measurable and taking values on $\cK(\complex^d).$
        Define $F_{j+1}\colon \Omega \rightarrow \cK(\complex^d)$ by taking the set of points
        of $F_j(x)$ with maximal modulus of the $j+1$ complex coordinate, positive real part and zero imaginary part for the same coordinate.
        Also $F_{j+1}$ will be measurable because of the same reasons as before.
        Eventually, we get the measurable function $F_d\colon \Omega \rightarrow \cK(\complex^d)$ and $F_d(x)$ must be a singleton for every $x \in \Omega,$
        this is $F_d(x) =\{v(x)\}$ for some $v(x) \in \complex^d,$
        because of the maximality of the modulus of each complex coordinate
        and the restriction of having each complex coordinate lying on the positive real axis.
    \end{proof}

    In order to define integrals of convex body valued functions, consider first the set
    \begin{equation*}
        S^1(F) \coloneq \{f \in S^0(F)\colon f \in \Lp{1}(\reals^n)\}
    \end{equation*}
    of integrable selection functions for $F.$
    The \emph{Aumann integral} of $F$ is then defined as
    \begin{equation*}
        \int_{\reals^n} F(x)\, \mathdm(x) = \left\{\int_{\reals^n} f(x)\, \mathdm(x)\colon f \in S^1(F)\right\}.
    \end{equation*}
    In this work, we will restrict to \emph{integrably bounded} convex body valued functions,
    that is to functions $F$ such that $|F(x)|$ is integrable.
    In particular, in this case $S^0(F) = S^1(F).$
    If $|F(x)|$ is only locally integrable, we will say that $F$ is \emph{locally integrably bounded}.
    For further convenience, given a cube $Q \subseteq \reals^n,$ we define the averaging operator $A_Q$ by
    \begin{equation*}
        A_QF(x) \coloneq \frac{\1_Q(x)}{|Q|} \int_{Q} F(y)\, \mathdm(y),
    \end{equation*}
    where $F$ is a locally integrably bounded convex body valued function.

    Given $1 \leq p < \infty,$ we define the Lebesgue space of convex body valued functions $\Lp{p}(\reals^n,\convex(\complex^d)),$
    or just $\Lp{p}$ when there is no ambiguity, as the set of functions $F\colon \reals^n \rightarrow \convex(\complex^d)$ such that
    \begin{equation*}
        \norm{F}{\Lp{p}} \coloneq \left(\int_{\reals^n}|F(x)|^p\, \mathdm(x)\right)^{1/p} < \infty.
    \end{equation*}
    The space of convex body valued functions $\Lp{\infty}(\reals^n,\convex(\complex^d)),$ or just $\Lp{\infty},$ is defined as
    the set of convex body valued functions $F$ for which
    \begin{equation*}
        \norm{F}{\Lp{\infty}} \coloneq \esssup \{|F(x)|\colon x \in \reals^n\} < \infty.
    \end{equation*}
    Given a $(d \times d)$-matrix weight $W,$ we define the weighted Lebesgue space $\Lp{p}(W)$ of convex body valued functions
    as the set of functions $F$ for which $W(x)^{1/p} F(x) \in \Lp{p}$ if $1\leq p<\infty$ and $W(x)F(x)\in\Lp{\infty}$ if $p=\infty$. Finally, we denote $\Vert F\Vert_{\Lp{p}(W)} := \Vert W^{1/p} F\Vert_{\Lp{p}}$ if $1\leq p<\infty$ and $\Vert F\Vert_{\Lp{\infty}(W)} := \Vert W F\Vert_{\Lp{\infty}}$ if $1\leq p<\infty$.

    \section{Vector valued extensions of operators on convex body valued functions}
    \label{sec:extrapolation_vector_valued}
    The aim of this section is to prove vector valued estimates for convex body valued functions with matrix weights,
    as well as to apply it to vector valued estimates for the matrix weighted maximal operators for vector valued functions
    (see Subsection~\ref{subsec:MatrixWeightedFeffermanStein} for the definitions and details).
    This will be a consequence of an extrapolation result on spaces of functions with values on $\convex$
    and it is a modification of the analogous result for matrix weights and vector valued functions due to
    Bownik and Cruz-Uribe (see~\cite[Section~9]{Cruz_Uribe_Bownik_Extrapolation}).
    Before stating the extrapolation theorem, we introduce the concept of families of extrapolation pairs
    following the convention in~\cite{Book_Extrapolation}.
    A family of extrapolation pairs $\cF$ is a collection of pairs $(F,G)$ of measurable functions
    taking values in $\convex$ such that neither $F$ nor $G$ is equal to $\{0\}$ almost everywhere.
    In addition, given such a family $\cF,$ we call each element $(F,G) \in \cF$ an extrapolation pair.
    We are interested in inequalities of the form
    \begin{equation*}
        \norm{F}{\Lp{p}(W)} \leq C \norm{G}{\Lp{p}(W)},\qquad (F,G) \in \cF,
    \end{equation*}
    in the sense that this holds for all pairs $(F,G) \in \cF$ for which the left-hand side is finite
    and with the constant $C$ depending on the characteristic $[W]_{A_p}$ but not on the particular weight $W.$

    \begin{lettertheorem}[Bownik,~Cruz-Uribe~\cite{Cruz_Uribe_Bownik_Extrapolation}]
        \label{thm:Extrapolation}
        Consider a family of extrapolation pairs $\cF.$
        Suppose that for some $p_0,$ $1 \leq p_0 \leq \infty,$ there exists an increasing function $C_{p_0}$
        such that for every matrix weight $W \in A_{p_0}$ it holds that
        \begin{equation}
            \label{eq:BootstrapInequalities}
            \norm{F}{\Lp{p_0}(W)} \leq C_{p_0}([W]_{A_{p_0}}) \norm{G}{\Lp{p_0}(W)},\qquad (F,G) \in \cF.
        \end{equation}
        Then, for all $1 < p < \infty$ and for every matrix weight $W \in A_p$ it holds that
        \begin{equation*}
            \norm{F}{\Lp{p}(W)} \leq C_{p}(p_0,n,d,[W]_{A_p}) \norm{G}{\Lp{p}(W)},\qquad (F,G) \in \cF,
        \end{equation*}
        where
        \begin{equation*}
            C_p(p_0,n,d,[W]_{A_p}) = C(p,p_0) C_{p_0}\left(C(n,d,p,p_0) [W]_{A_p}^{\max\left\{1,\frac{p_0-1}{p-1}\right\}}\right)
        \end{equation*}
        if $p_0<\infty,$ and
        \begin{equation*}
            C_p(p_0,n,d,[W]_{A_p}) = C(p,p_0) C_{p_0}\left(C(n,d,p,p_0) [W]_{A_p}^{\frac{1}{p-1}}\right)
        \end{equation*}
        if $p_0=\infty.$
    \end{lettertheorem}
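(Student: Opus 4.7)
The plan is to carry out the standard Rubio de Francia extrapolation machinery, transcribed to the matrix weighted convex body valued setting as in \cite[Section~9]{Cruz_Uribe_Bownik_Extrapolation}. Two preparatory steps enable this transcription: first, Lemma~\ref{lemma:ModulusSelectionFunction} lets us realize $|W^{1/p}F(x)|$ pointwise as the modulus of a measurable $\complex^d$-valued selection, so that the quantities $\norm{F}{\Lp{p}(W)}$ are bona fide scalar $\Lp{p}$ norms; second, the matrix weighted boundedness of the Christ--Goldberg maximal operator (with quantitative dependence on $[W]_{A_p}$) provides the iteration engine.

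After disposing of the trivial case $p = p_0,$ I would handle the two regimes $p > p_0$ and $p < p_0$ separately. For $p > p_0 \geq 1,$ fix $(F,G) \in \cF$ with finite left hand side and $W \in A_p.$ Scalar $\Lp{p/p_0}$ duality produces a nonnegative $h$ with $\norm{h}{\Lp{(p/p_0)'}} = 1$ such that $\norm{F}{\Lp{p}(W)}^{p_0}$ equals $\int_{\reals^n}|W^{1/p}F(x)|^{p_0} h(x)\mathdm(x).$ A Rubio de Francia iterate $\mathcal{R}h$ built from a scalar maximal operator bounded on $\Lp{(p/p_0)'}$ (weighted by an auxiliary scalar built out of $W$) then dominates $h$ pointwise, has comparable norm, and satisfies a scalar $A_1$ bound. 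Combining $W$ with the appropriate power of $\mathcal{R}h$ produces a matrix weight $W_0$ that lies in $A_{p_0}$ with $[W_0]_{A_{p_0}}$ bounded by a power of $[W]_{A_p}$ whose exponent matches $(p_0-1)/(p-1)$ if $p_0 < \infty$ and $1/(p-1)$ if $p_0 = \infty.$ Applying the hypothesis \eqref{eq:BootstrapInequalities} to $(F,G)$ with the weight $W_0$ and reversing the Hölder pairing then concludes the estimate.

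The case $1 < p < p_0 < \infty$ is handled symmetrically by running the same scheme on the dual side, using that $W' \coloneq W^{-1/(p-1)} \in A_{p'}$ with comparable characteristic and that $p' > p_0'.$ The factor $\max\{1,(p_0-1)/(p-1)\}$ in the statement records the outcome of tracking exponents through both regimes, while the endpoint exponent $1/(p-1)$ in the $p_0 = \infty$ case is the residue of the fact that only one of the two dualizations is available.

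The main obstacle is verifying that the scalar iterate $\mathcal{R}h,$ when coupled with the matrix weight $W,$ does produce a matrix $A_{p_0}$ weight $W_0$ with the stated quantitative control on its characteristic; this is where the matrix character of $W$ couples nontrivially with the scalar iteration. For real convex body valued functions this is carried out in \cite[Section~9]{Cruz_Uribe_Bownik_Extrapolation}, and the adaptation to the complex convex body setting requires only the modifications of \cite{DKPS2024} recorded in the appendix of the present paper.
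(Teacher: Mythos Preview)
Your proposal is correct and follows essentially the same approach as the paper, which does not give a self-contained proof but defers to \cite[Theorem~9.1]{Cruz_Uribe_Bownik_Extrapolation} (the Rubio de Francia iteration adapted to matrix weights via the Christ--Goldberg maximal function) together with the appendix for the passage from real to complex convex bodies. The only minor difference is that the paper emphasizes that one works directly with the convex body valued functions $F,G$ rather than specializing to $F(x)=\{\lambda f(x):|\lambda|\leq1\}$, but your outline already accommodates this.
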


    This statement is the complex variable version of~\cite[Theorem~9.1]{Cruz_Uribe_Bownik_Extrapolation}.
    Also, observe that~\cite[Theorem~9.1]{Cruz_Uribe_Bownik_Extrapolation}
    refers to real \emph{vector valued} functions, but Theorem~\ref{thm:Extrapolation}
    is stated directly for complex \emph{convex body} valued functions.
    The reason is that, in~\cite{Cruz_Uribe_Bownik_Extrapolation},
    given real vector valued functions $f$ and $g$ Bownik and Cruz-Uribe
    define auxiliary real convex body valued functions $F$ and $G$ by taking
    the segments with endpoints $\pm f$ and $\pm g$ respectively,
    and the whole proof is done for these convex body valued functions.
    For the sake of generality, we opted to use convex body valued functions since the beginning.
    The proof of Theorem~\ref{thm:Extrapolation} follows that
    of~\cite[Theorem~9.1]{Cruz_Uribe_Bownik_Extrapolation} almost verbatim
    and we do not include it here.
    However, there are some minor differences and a couple of technical details
    that require some care.
    The main difficulty in translating the real variable proof to the complex variable one is the following.
    In~\cite{Cruz_Uribe_Bownik_Extrapolation} Bownik and Cruz-Uribe consider a family of norms
    on $\reals^d$ parametrized in a measurable way,
    such as the one given by a real (almost everywhere positive definite) matrix weight,
    and they include a proof of the fact that one can choose
    a family of reducing operators for such a family of norms.
    The proof of the analogous fact in the complex setting appears already
    in~\cite[Section~9]{DKPS2024}.
    We defer the discussion on Theorem~\ref{thm:Extrapolation} to Appendix~\ref{s:appendix}.
    There we review the proof of~\cite[Theorem~9.1]{Cruz_Uribe_Bownik_Extrapolation}.
    Most of the steps are just commented briefly together with an explanation of why
    they hold all the same in the complex setting.
    We devote more detail though to those steps that are not immediate,
    namely the measurability of reducing operators.
    We also refer the reader to~\cite[Chapter~I.3]{Book_Extrapolation} for a detailed exposition on the topic in the classical setting.

    The main result of this section follows from Theorem~\ref{thm:Extrapolation}.
    This is a vector valued estimate for sequences $\{(F_n,G_n)\} \subseteq \cF$
    of a given family of extrapolation pairs $\cF.$

    \begin{theorem}[Vector valued estimates for families of extrapolation pairs]
        \label{thm:VectorValuedEstimates}
        Consider a family of extrapolation pairs $\cF.$
        Suppose that for some $1 \leq p_0 \leq \infty$ there exists an increasing function $C_{p_0}$
        such that for every matrix weight $W \in A_{p_0}$ inequalities~\eqref{eq:BootstrapInequalities} hold.
        Then, for any $p$ and $q,$ $1 < p,q < \infty,$ and for every matrix weight $W \in A_p$ it holds that
        \begin{equation*}
            \norm{\left(\sum_{n=1}^\infty |W^{1/p}F_n|^q\right)^{1/q}}{\Lp{p}}
            \leq C(n,d,p,p_0,q,[W]_{A_p}) \norm{\left(\sum_{n=1}^\infty |W^{1/p}G_n|^q\right)^{1/q}}{\Lp{p}},
        \end{equation*}
        where $\{(F_n,G_n)\} \subseteq \cF$ and
        \begin{align*}
            C(n,d,p_0,p,q,[W]_{A_p}) = &C(d,p,q)\\
                &C_{p_0}\left(C(n,d,p_0,p,q) [W]_{A_p}^{\left(\max\left\{1,\frac{p_0-1}{q-1}\right\}\max\left\{1,\frac{q-1}{p-1}\right\}\right)}\right)
        \end{align*}
        if $p_0 < \infty,$ and
        \begin{equation*}
            C(n,d,p_0,p,q,[W]_{A_p}) = C(d,p,q) C_{p_0}\left(C(n,d,p_0,p,q) [W]_{A_p}^{\left(\frac{1}{q-1}\max\left\{1,\frac{q-1}{p-1}\right\}\right)}\right)
        \end{equation*}
        if $p_0 = \infty.$
    \end{theorem}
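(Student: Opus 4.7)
The plan is to adapt the classical Rubio de Francia argument for deducing vector valued estimates from extrapolation hypotheses, as presented in \cite[Chapter~3]{Book_Extrapolation}, to our matrix weighted convex body setting. Concretely, I will invoke Theorem~\ref{thm:Extrapolation} twice, with a carefully chosen change of family in between; the crucial ingredient is the interpretation of the $\ell^q$-aggregation of a sequence of convex body valued functions as itself a convex body valued object (the idea for which is credited to S.~Treil in the acknowledgements).

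First, I apply Theorem~\ref{thm:Extrapolation} to $\cF$ at the exponent $p_0$ to obtain, for every $1<r<\infty$ and every matrix weight $V\in A_r$, the bootstrap inequality
\begin{equation*}
    \norm{F}{\Lp{r}(V)} \leq C_r([V]_{A_r})\,\norm{G}{\Lp{r}(V)},\qquad (F,G)\in\cF,
\end{equation*}
with $C_r$ given explicitly by Theorem~\ref{thm:Extrapolation}. Specializing to $r=q$, raising to the $q$-th power, and summing over any sequence $\{(F_n,G_n)\}_{n\ge1}\subseteq\cF$ yields, via Fubini,
\begin{equation*}
    \int_{\reals^n}\sum_n |V(x)^{1/q}F_n(x)|^q\mathdm(x) \leq C_q^q \int_{\reals^n}\sum_n |V(x)^{1/q}G_n(x)|^q\mathdm(x),
\end{equation*}
uniformly in $V\in A_q$.

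Next, I repackage the sequence into a single convex body valued pair. For each truncation $N\ge 1$, I construct a pair $(\widetilde F_N,\widetilde G_N)$ of convex body valued functions taking values in $\convex(\complex^{Nd})$, together with the $N$-fold block diagonal extension $\widetilde V$ of $V$ (which clearly satisfies $[\widetilde V]_{A_r}=[V]_{A_r}$), such that
\begin{equation*}
    |\widetilde V^{1/q}(x)\widetilde F_N(x)|^q \sim_{d,q} \sum_{n=1}^{N}|V(x)^{1/q}F_n(x)|^q
\end{equation*}
with constants uniform in $V\in A_q$ and in $N$, and analogously for $\widetilde G_N$. The displayed scalar inequality above then translates into a genuine bootstrap bound $\|\widetilde F_N\|_{\Lp{q}(\widetilde V)} \lesssim_{d,q} C_q \|\widetilde G_N\|_{\Lp{q}(\widetilde V)}$ on the new family $\widetilde\cF=\{(\widetilde F_N,\widetilde G_N)\}$ at $p_1=q$. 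A second application of Theorem~\ref{thm:Extrapolation} to $\widetilde\cF$ delivers, for any $1<p<\infty$ and $W\in A_p$, the corresponding $\Lp{p}(\widetilde W)$ estimate; unwinding the block diagonal structure and then letting $N\to\infty$ yields the desired vector valued inequality. Composing the constants from the two applications of Theorem~\ref{thm:Extrapolation} produces the stated $[W]_{A_p}$ exponent $\max\{1,(p_0-1)/(q-1)\}\max\{1,(q-1)/(p-1)\}$.

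The chief obstacle is precisely the construction of the convex body valued aggregation $(\widetilde F_N,\widetilde G_N)$, since the norm identity must hold with constants independent of the weight $V$. For $q=2$ the Cartesian product $\widetilde F_N(x)=F_1(x)\times\cdots\times F_N(x)\subseteq\complex^{Nd}$ suffices: the Euclidean norm on $\complex^{Nd}$ naturally realizes the $\ell^2$ sum, and the block diagonal action of $\widetilde V^{1/2}$ is fully compatible with this. For general $q\in(1,\infty)$ one must replace the Cartesian product by a more delicate geometric construction (the convex body valued analog of the scalar $\ell^q$-function in the sense of Treil), so that the Euclidean norm of the resulting convex body in $\complex^{Nd}$ encodes an $\ell^q$ rather than an $\ell^2$ sum; the measurability and maximal selection arguments of Section~\ref{sec:ConvexBodyValuedFunctions}, in particular Lemma~\ref{lemma:ModulusSelectionFunction}, together with the complex convex body adaptations of \cite{DKPS2024} documented in the appendix, guarantee that the real valued blueprint of \cite[Section~9]{Cruz_Uribe_Bownik_Extrapolation} transfers to our setting without new difficulty.
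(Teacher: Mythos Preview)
Your two-step strategy — first extrapolate $\cF$ from $p_0$ to $q$, then aggregate the sequence into a single convex body valued pair and extrapolate again from $q$ to $p$ — is exactly the paper's approach, and the composition of the two extrapolation constants is what produces the stated exponent on $[W]_{A_p}$.

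The gap is in your implementation of the aggregation step. You propose to embed $(F_1,\ldots,F_N)$ into $\convex(\complex^{Nd})$ and apply Theorem~\ref{thm:Extrapolation} there, with the block diagonal weight $\widetilde W$. While $[\widetilde W]_{A_p}=[W]_{A_p}$ is fine, the constant in Theorem~\ref{thm:Extrapolation} has the form $C(p,q)\,C_{q}\bigl(C(n,Nd,p,q)\,[\widetilde W]_{A_p}^{\cdots}\bigr)$: the dimensional factor $C(n,Nd,p,q)$ sits \emph{inside} the argument of the increasing function $C_q$, and in the Bownik--Cruz-Uribe proof this factor genuinely depends on the ambient dimension (through the John ellipsoid distortion $\sqrt{Nd}$, reducing operator comparisons, etc.). So the bound you obtain for each truncation blows up with $N$, and the limit $N\to\infty$ does not yield a finite constant. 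Your remark that the norm comparison $|\widetilde V^{1/q}\widetilde F_N|^q\sim\sum_{n\le N}|V^{1/q}F_n|^q$ holds uniformly in $N$ is correct for $q=2$ via the Cartesian product, but this does not address the separate $N$-dependence coming from the extrapolation theorem itself.

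The paper avoids this by keeping the aggregation inside $\complex^d$. It defines, for any sequence $\{K_m\}$ in $\convex(\complex^d)$ with $\sum_m|K_m|^q<\infty$, the convex body
\[
\Sigma_q(\{K_m\})=\Bigl\{\sum_m a_m v_m:\ v_m\in K_m,\ \|\{a_m\}\|_{\ell^{q'}}\le 1\Bigr\}\subseteq\complex^d,
\]
and proves (Lemma~\ref{lemma:LittleLqNormsConvexBody}) that $|A\,\Sigma_q(\{K_m\})|\sim_{d,q}\bigl(\sum_m|AK_m|^q\bigr)^{1/q}$ for every matrix $A$, with constants depending only on $d$ and $q$, not on the number of summands. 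Setting $F(x)=\Sigma_q(\{F_n(x)\})$, $G(x)=\Sigma_q(\{G_n(x)\})$ gives a pair in $\convex(\complex^d)$ to which Theorem~\ref{thm:Extrapolation} applies directly in dimension $d$. This is precisely the ``convex body valued analog of the scalar $\ell^q$-function in the sense of Treil'' that you gesture at — but the point is that it lives in $\complex^d$, not $\complex^{Nd}$, and that is what makes the argument close.
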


    Recall that one can define the Minkowski addition of $A,B \in \convex$ (and for subsets of $\complex^d$ in general)
    by
    \begin{equation*}
        A + B \coloneq \{a + b\colon a \in A, b \in B\}.
    \end{equation*}
    It is easy to check that if $A,B \in \convex,$ then also $A+B \in \convex.$
    One can extend the Minkowski addition of two convex body to that of $N$ convex bodies by induction, with $N \geq 2.$
    We can also define the scalar multiplication, given $A \in \convex$ and $\lambda \in \complex,$ by
    \begin{equation*}
        \lambda A \coloneq \{\lambda a\colon a \in A\},
    \end{equation*}
    and also $\lambda A \in \convex.$
    Keep in mind that these operations do not define a vector space structure, since the Minkowski addition has no inverse.
    
    Given $1 \leq q < \infty$ and a sequence $\{K_n\}_{n=1}^\infty \subseteq \convex$ such that
    \begin{equation}
        \label{eq:ConvexBodiesLittleLpSummability}
        \sum_{n=1}^\infty |K_n|^q < \infty,
    \end{equation}
    we define the infinite $\lp{q}$ Minkowski addition of $\{K_n\},$ denoted by $\Sigma_{q} (\{K_n\}),$ as
    \begin{equation}
        \label{eq:InfiniteLittleLpMinkowskiAddition}
        \Sigma_{q}(\{K_n\}) \coloneq
        \bigcup \left\{\sum_{n=1}^\infty a_n v_n\colon v_n \in K_n \text{ for } n \geq 1, \{a_n\} \in \lp{q'}, \norm{\{a_n\}}{\lp{q'}} \leq 1 \right\}.
    \end{equation}
    The sums in~\eqref{eq:InfiniteLittleLpMinkowskiAddition} are convergent due to
    the summability condition~\eqref{eq:ConvexBodiesLittleLpSummability} and Hölder's inequality.
    In particular, it holds that
    \begin{equation*}
        \left|\Sigma_{q} (\{K_n\})\right| \leq \left(\sum_{n=1}^\infty |K_n|^q\right)^{1/q}
    \end{equation*}
    for any $1 \leq q < \infty$ whenever the right-hand side of this expression is finite.
    Furthermore, given a $(d \times d)$ matrix $A,$ it also holds by linearity that
    \begin{equation*}
        A \Sigma_q (\{K_n\}) = \Sigma_q (\{A K_n\}).
    \end{equation*}
    For $1 \leq q < \infty$ it also happens that $\Sigma_{q} (\{K_n\})$ is a convex symmetric bounded closed set,
    which is proved in next lemma.
    
    \begin{lemma}
        \label{lemma:LittleLqNormsConvexBody}
        Let $1\leq q<\infty$ and let $\{ K_{m}\}^{\infty}_{m=1}$ be a sequence of sets in $\convex$ with
        \begin{equation*}
            \sum_{m=1}^{\infty}|K_m|^{q}<\infty.
        \end{equation*}
        Then $K = \Sigma_{q} (\{K_n\})$
        is a well-defined set in $\convex$ with
        \begin{equation*}
            |K|\sim_{d,q}\left(\sum_{m=1}^{\infty}|K_m|^{q}\right)^{1/q}.
        \end{equation*}
    \end{lemma}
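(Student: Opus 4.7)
My plan is to split the statement into two parts: first check that $K = \Sigma_q(\{K_n\})$ really belongs to $\convex(\complex^d)$ (i.e.\ it is closed, bounded, symmetric and convex), and then establish the two-sided estimate for $|K|$, of which the upper bound is automatic and the lower bound is the only substantial point.

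The upper bound $|K| \leq \bigl(\sum_{m} |K_m|^q\bigr)^{1/q}$ already follows from Hölder's inequality (exactly as is sketched in the excerpt right before the lemma), and this same Hölder estimate guarantees that for every admissible sequence $\{a_n\} \in \lp{q'}$ with $\|\{a_n\}\|_{\lp{q'}} \leq 1$ and $v_n \in K_n$, the series $\sum_n a_n v_n$ converges absolutely in $\complex^d$. Boundedness and the fact that $0 \in K$ are then immediate. Symmetry (balancedness) is trivial: if $|\lambda|\leq 1$, replacing $\{a_n\}$ by $\{\lambda a_n\}$ keeps the $\lp{q'}$-norm $\leq 1$. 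For convexity, given two elements $u = \sum a_n u_n$ and $v = \sum b_n v_n$ in $K$ and $t \in [0,1]$, I set $c_n = t|a_n| + (1-t)|b_n|$ and write $t a_n u_n + (1-t) b_n v_n = c_n w_n$ where $w_n$ is a balanced-convex combination of $u_n, v_n, 0 \in K_n$; since each $K_n$ is itself balanced and convex, $w_n \in K_n$, and $\|\{c_n\}\|_{\lp{q'}} \leq 1$ by Minkowski's inequality. Closedness I handle by a standard diagonal extraction: given $w^{(k)} = \sum_n a_n^{(k)} v_n^{(k)} \to w$, the compactness of each $K_n$ together with boundedness of $\{a^{(k)}\}$ in $\lp{q'}$ lets me pass to a subsequence with $a_n^{(k)} \to a_n$ and $v_n^{(k)} \to v_n \in K_n$ coordinatewise; Fatou gives $\|\{a_n\}\|_{\lp{q'}} \leq 1$, and the uniform tail bound $|\sum_{n>N} a_n^{(k)} v_n^{(k)}| \leq (\sum_{n>N} |K_n|^q)^{1/q}$ lets me identify $w = \sum_n a_n v_n \in K$.

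The main point is the lower bound $|K| \gtrsim_{d,q} (\sum_m |K_m|^q)^{1/q}$. Here the difficulty is that the vectors realizing $|K_m| = \sup_{v \in K_m} |v|$ may point in wildly different directions in $\complex^d$, so one cannot just add them up naively. My approach is a pigeonhole-plus-phase-alignment argument. For each $m$ I choose, via compactness of $K_m$, a vector $u_m \in K_m$ with $|u_m| = |K_m|$, and then some standard basis coordinate $i(m) \in \{1,\dots,d\}$ for which $|\langle u_m, e_{i(m)}\rangle| \geq |K_m|/\sqrt{d}$. By pigeonhole there is a fixed coordinate $i_0$ such that the set $S = \{m : i(m) = i_0\}$ satisfies $\sum_{m \in S} |K_m|^q \geq d^{-1} \sum_m |K_m|^q$. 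Writing $\langle u_m, e_{i_0}\rangle = |\langle u_m, e_{i_0}\rangle| e^{i\theta_m}$ for $m \in S$, I now choose the optimal Hölder-extremizing sequence with conjugate phases, namely
\begin{equation*}
    a_m \coloneq \frac{|K_m|^{q-1} e^{-i\theta_m}}{\bigl(\sum_{n \in S} |K_n|^q\bigr)^{1/q'}} \quad (m \in S), \qquad a_m \coloneq 0 \quad (m \notin S),
\end{equation*}
which satisfies $\|\{a_m\}\|_{\lp{q'}} = 1$. Then $w \coloneq \sum_m a_m u_m \in K$, and by construction its $i_0$-th coordinate is a sum of nonnegative reals bounded below by $d^{-1/2} (\sum_{m \in S} |K_m|^q)^{1/q}$. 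Combining with the pigeonhole bound for $S$ yields $|w| \geq d^{-1/2} d^{-1/q} (\sum_m |K_m|^q)^{1/q}$, proving the lower bound with constant depending only on $d$ and $q$. The only slightly delicate step in the whole proof is making the convexity check for $K$ work without a linear structure on $\convex$, but the balanced property of each $K_n$ is exactly what makes the combination $c_n w_n$ land back in $K_n$.
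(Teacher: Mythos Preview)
Your proof is correct and follows essentially the same route as the paper's: the upper bound via H\"older, convexity via rewriting $ta_nu_n+(1-t)b_nv_n$ with coefficients $c_n=t|a_n|+(1-t)|b_n|$ and a balanced-convex combination in $K_n$, closedness via the compactness/diagonal argument the paper summarizes as ``a standard weak star compactness argument,'' and the lower bound via choosing near-maximizers in each $K_m$, pigeonholing onto a single coordinate, and phase-aligning with the $\ell^{q'}$-extremizing sequence. The only cosmetic differences are that the paper first truncates to finitely many terms and takes $|v_m|\geq\tfrac12|K_m|$ rather than exact maximizers, but the mechanism is identical.
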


    \begin{proof}
        First of all, for all sequences $\{ v_{m}\}_{m=1}^{\infty}$ with $v_m\in K_m,$ for all $m=1,2,\ldots$ and for all complex numbers $a_1,a_2,\ldots$ with $\norm{\{a_m\}}{q'}\leq1$ we have by H\"older's inequality
        \begin{equation*}
            \sum_{m=1}^{\infty}|a_mv_m|\leq\norm{\{a_m\}}{q'}\left(\sum_{m=1}^{\infty}|v_m|^{q}\right)^{1/q}\leq\left(\sum_{m=1}^{\infty}|K_m|^{q}\right)^{1/q}<\infty.
        \end{equation*}
        This shows that $K$ is a well-defined bounded set with $|K|\leq \left(\sum_{m=1}^{\infty}|K_m|^{q}\right)^{1/q}.$

        It is obvious that $K$ is symmetric. In addition, a direct computation shows that $K$ is convex.


        Let us now show that $K$ is closed. In fact, we will show directly that $K$ is compact. This follows from a standard weak star compactness argument, whose details we present for the reader's convenience.

        Since $\complex^{d}$ is a separable metric space, it suffices to show that $K$ is sequentially compact. Let $\{x_{r}\}^{\infty}_{r=1}$ be a sequence of elements of $K$. Then, for all $r=1,2,\ldots$ we can write
        \begin{equation*}
            x_{r} = \sum_{m=1}^{\infty}a^{r}_{m}v_{m}^{r},
        \end{equation*}
        where $a^{r}=\{a^{r}_m\}_{m=1}^{\infty}\in\ell^{q'}$, $\Vert a^{r}\Vert_{\ell^{q'}}\leq1$, and $v_{m}^{r}\in K_m$, for all $m=1,2,\ldots$.

        It is well-known that $\ell^{q'}$ is isometrically isomorphic to the dual $(\ell^{q})^{*}$ of $\ell^{q}$ under the natural pairing, since $q<\infty$ (see for instance \cite[Chapter III, Examples 5.9 and 5.10]{Conway2007}). Moreover, it is clear that $\ell^{q}$ is a a separable Banach space. Therefore, by the Banach--Alaoglou theorem (\cite[Chapter V, Theorems 3.1 and 5.1]{Conway2007}) we deduce that the closed unit ball of $\ell^{q'}$ is sequentially compact under the weak star topology on $\ell^{q'}$. Clearly, $\{a^{r}\}_{r=1}^{\infty}$ is a sequence in the closed unit ball of $\ell^{q'}$. We deduce that after passing to a subsequence of $\{a^r\}^{\infty}_{r=1}$, one can find $a=\{a_m\}_{m=1}^{\infty}\in\ell^{q'}$, $\Vert a\Vert_{\ell^{q'}}\leq1$ with $a^{r}\to a$ in the weak star topology of $\ell^{q'}$ as $r\to\infty$. Notice that to ease notation, we have not introduced a new notation for this subsequence of $\{a^{r}\}_{r=1}^{\infty}$.

        Notice that
        \begin{equation*}
            \Vert \{|a^{r}_m-a_{m}|\}_{m=1}^{\infty}\Vert_{\ell^{q'}} \leq 2,\quad\forall r=1,2,\ldots.
        \end{equation*}
        Clearly, a dilation of a compact set in a topological vector space remains a compact set. Thus, by the same token as before, after passing to a further subsequence we can find $b\in\ell^{q'}$ with $\Vert b\Vert_{\ell^{q'}}\leq2$ such that
        \begin{equation*}
            \{|a^{r}_m-a_{m}|\}_{m=1}^{\infty}\to b
        \end{equation*}
        in the weak star topology of $\ell^{q'}$ as $r\to\infty$. Since convergence in the weak star topology implies pointwise convergence on $\mathbb{N}$ we deduce
        \begin{equation*}
            a^{r}\to a
        \end{equation*}
        as well as
        \begin{equation*}
            \{|a^{r}_m-a_{m}|\}_{m=1}^{\infty}\to b
        \end{equation*}
        pointwise on $\mathbb{N}$ as $r\to\infty$, showing that $b$ must be the zero sequence.
        
        We now claim that
        \begin{equation}
            \label{eq:step1}
            \sum_{m=1}^{\infty}a^{r}_{m}v_{m}^{r} - \sum_{m=1}^{\infty}a_{m}v_{m}^{r}\to 0
        \end{equation}
        in $\complex^d$ as $r\to\infty$. Indeed, we compute
        \begin{equation*}
            \left|\sum_{m=1}^{\infty}a^{r}_{m}v_{m}^{r} - \sum_{m=1}^{\infty}a_{m}v_{m}^{r}\right|
            \leq\sum_{m=1}^{\infty}|a^{r}_{m}-a_{m}|\cdot|v_{m}^{r}|
            \leq\sum_{m=1}^{\infty}|a^{r}_{m}-a_{m}|\cdot|K_{m}|.
        \end{equation*}
        Observe that $\{|K_{m}|\}_{m=1}^{\infty}$ is a fixed sequence in $\ell^{q}$. Therefore, the weak star convergence of $\{|a^{r}_m-a_{m}|\}_{m=1}^{\infty}$ against the zero sequence in $\ell^{q'}$ proves~\eqref{eq:step1}.

        Now, for each $m=1,2,\ldots$, $K_m$ is a compact metric space, therefore the cartesian product $\prod_{m=1}^{\infty}K_m$ can also be made into a compact metric space. Thus, after passing to a further subsequence, one can find a sequence $\{v_m\}_{m=1}^{\infty}\in \prod_{m=1}^{\infty}K_m$ such that
        \begin{equation*}
            v_{m}^{r}\to v_{m}
        \end{equation*}
        in $K_m$, that is in $\complex^{d}$ as $r\to\infty$.

        Lastly, we show that
        \begin{equation}
            \label{eq:step2}
            \sum_{m=1}^{\infty}a_{m}v_{m}^{r} \to \sum_{m=1}^{\infty}a_{m}v_{m}
        \end{equation}
        in $\complex^{d}$ as $r\to\infty$. This will show that the sequence $\{x_{r}\}^{\infty}_{r=1}$ with which we began has a subsequence that converges to the element $x:=\sum_{m=1}^{\infty}a_{m}v_{m}$ of $K$.

        Consider the counting measure $\mu$ on $\mathbb{N}$. Observe that
        \begin{equation*}
            \{a_{m}v_{m}^{r}\}_{m=1}^{\infty}\to\{a_{m}v_{m}\}_{m=1}^{\infty}
        \end{equation*}
        pointwise on $\mathbb{N}$. Furthermore, notice that
        \begin{equation*}
            |a_{m}v_{m}^{r}|\leq |a_{m}|\cdot|K_{m}|,\quad\forall r=1,2,\ldots
        \end{equation*}
        and that $\{|a_m|\cdot|K_{m}|\}_{m=1}^{\infty}\in L^1(\mathbb{N},\mu)$. Therefore, an application of the Dominated Convergence Theorem yields immediately~\eqref{eq:step2}.
        
        We will now show that
        \begin{equation*}
            |K|\gtrsim_{d,q}\frac{1}{\sqrt{d}}\left(\sum_{m=1}^{\infty}|K_m|^{q}\right)^{1/q}.
        \end{equation*}
        We can pick a sequence $\{ v_m\}_{m=1}^{\infty}$ with $v_m\in K_m$ and $|v_m|\geq\frac{1}{2}|K_m|,$ for all $m=1,2,\ldots.$ It is clear that there exists a positive integer $N$ with
        \begin{equation*}
            \sum_{m=1}^{N}|v_m|^{q}\geq\frac{1}{2}\sum_{m=1}^{\infty}|v_m|^{q}.
        \end{equation*}
        For any vector $x\in\complex^d,$ we denote by $x^{1},\ldots,x^{d}$ its coordinates. Notice that
        \begin{equation*}
            \sum_{m=1}^{N}|v_m|^{q}\sim_{d,q}\sum_{m=1}^{N}\sum_{j=1}^{d}|v_m^{j}|^q=\sum_{j=1}^{d}\sum_{m=1}^{N}|v_m^{j}|^q,
        \end{equation*}
        therefore there exists $j\in\{1,\ldots,d\}$ with
        \begin{equation*}
            \sum_{m=1}^{N}|v_m^{j}|^q\gtrsim_{d,q}\sum_{m=1}^{N}|v_m|^{q}.
        \end{equation*}
        Clearly, one can find complex numbers $a_1,\ldots,a_{N}$ with $\norm{\{a_m\}_{m=1}^N}{q'}\leq1$ such that
        \begin{equation*}
            \sum_{m=1}^{N}a_mv_m^{j}=\left(\sum_{m=1}^{N}|v_m^{j}|^q\right)^{1/q}.
        \end{equation*}
        It follows that
        \begin{align*}
            \left|\sum_{m=1}^{N}a_mv_m\right|&\geq\left|\sum_{m=1}^{N}a_mv_m^{j}\right|=\left(\sum_{m=1}^{N}|v_m^{j}|^q\right)^{1/q}\\
            &\gtrsim_{d,q}\left(\sum_{m=1}^{N}|v_m|^{q}\right)^{1/q}\gtrsim_{q}\left(\sum_{m=1}^{\infty}|K_m|^{q}\right)^{1/q}.
        \end{align*}
        Since $\sum_{m=1}^{N}a_mv_m=\sum_{m=1}^{N}a_mv_m+\sum_{m=N+1}^{\infty}0\cdot 0\in K,$ the proof is complete.
    \end{proof}

    We are now ready to prove Theorem~\ref{thm:VectorValuedEstimates}.
    \begin{proof}[Proof~of~Theorem~\ref{thm:VectorValuedEstimates}]
        Fix first $1 < q < \infty.$
        We construct a new family $\cF_q$ of convex body valued functions as follows.
        For each sequence $\{(F_m,G_m)\}^{\infty}_{m=1} \subseteq \cF$ such that
        \begin{equation}
            \label{eq:ConvexBodySummability}
            \sum_{m=1}^\infty |F_m(x)|^q < \infty\quad \text{ and }\quad \sum_{m=1}^\infty |G_m(x)|^q < \infty
        \end{equation}
        at almost every $x \in \reals^n,$
        we define
        \begin{equation*}
            F(x) = \Sigma_q (\{F_m(x)\}^{\infty}_{m=1}),\qquad G(x) = \Sigma_q (\{G_m(x)\}^{\infty}_{m=1}).
        \end{equation*}
        Observe that by Lemma~\ref{lemma:LittleLqNormsConvexBody}, both $F$ and $G$ are convex body valued functions, $F,G:\reals^n\to\mathcal{K}_{\mathrm{bcs}}$ with
        \begin{equation*}
            |W(x)^{1/p}F(x)| \sim_{d,p} \left(\sum_{m=1}^\infty |W(x)^{1/p}F_m(x)|^q\right)^{1/q}
        \end{equation*}
        and
        \begin{equation*}
            |W(x)^{1/p}G(x)| \sim_{d,p} \left(\sum_{m=1}^\infty |W(x)^{1/p}G_m(x)|^q\right)^{1/q}
        \end{equation*}
        at almost every $x \in \reals^n,$ for any $1<p<\infty.$
        
        Let us prove that the convex body valued functions $F,G$ are measurable. For each $m=1,2,\ldots$ define the convex body valued function $F^{m}:\reals^n\to\mathcal{K}_{\mathrm{bcs}}$ by
        \begin{equation*}
            F^{m}(x) := \Sigma_{q}(\{F_t(x)\}^{m}_{t=1}),\quad x\in\reals^{n}.
        \end{equation*}
        Fix $m$. For each $t=1,\ldots,m$, by using the characterization \eqref{eq:SelectionFunctionMeasurability} of measurability of convex body valued functions we can find a sequence $\{f_k^{t}\}_{k=1}^{\infty}$ of measurable selections functions of $F_t$ such that
        \begin{equation*}
            F_{t}(x) = \overline{\{f_k^{t}(x):~k=1,2,\ldots\}}.
        \end{equation*}
        Then, it is easy to see that
        \begin{equation*}
            F^{m}(x) = \overline{\left\{\sum_{t=1}^{m}a_t f^t_{k_{t}}(x)\colon k_t\in\mathbb{N},~a_t\in\mathbb{Q}\text{ for } t=1,\ldots,m\text{ with }\sum_{t=1}^{m}|a_t|^{q'}\leq1\right\}}.
        \end{equation*}
        Thus, by the characterization \eqref{eq:SelectionFunctionMeasurability} of measurability of convex body valued functions coupled with the fact that $\mathbb{N}^{m}$ is still a countable set, it follows that each $F^{m}$ is measurable.
        
        Moreover, it is clear that
        \begin{equation*}
            F(x) = \overline{\bigcup_{m=1}^{\infty}F_m(x)}
        \end{equation*}
        and therefore also
        \begin{equation*}
            F(x) = \overline{\mathrm{conv}\left(\bigcup_{m=1}^{\infty}F_{m}(x)\right)},
        \end{equation*}
        since $F(x)$ is already known to be a closed convex set. Thus by \cite[Theorem 8.2.2]{Aubin_2009} we deduce that $F$ is measurable. Similarly, $G$ is also measurable.

        Next, note that the family of extrapolation pairs $\cF_q$ that we have just constructed
        satisfies inequalities~\eqref{eq:BootstrapInequalities} with exponent $q$ for any matrix weight $W \in A_q.$
        Indeed, for any given matrix weight $W \in A_q,$ due to Lemma~\ref{lemma:LittleLqNormsConvexBody} it holds that
        \begin{equation*}
            \begin{split}
                \norm{F}{\Lp{q}(W)}^q &= \int_{\reals^n} |W^{1/q}F(x)|^q\, \mathdm(x)\\
                &= \int_{\reals^n} \left|\Sigma_q (\{W(x)^{1/q} F_n(x)\})\right|^q\, \mathdm(x)\\
                &\leq C(d,q) \sum_{n=1}^\infty \int_{\reals^n} |W(x)^{1/q} F_n(x)|^q\, \mathdm(x)\\
                &\leq C_{q}([W]_q)^q \sum_{n=1}^\infty \int_{\reals^n} |W(x)^{1/q} G_n(x)|^q\, \mathdm(x)\\
                &\leq C(d,q) C_{q}([W]_q)^q \int_{\reals^n} \left|\Sigma_q (\{W(x)^{1/q} G_n(x)\})\right|^q\, \mathdm(x)\\
                &= C_{q}([W]_q)^q \norm{G}{\Lp{q}(W)}^q.
            \end{split}
        \end{equation*}
        Here we have used Theorem~\ref{thm:Extrapolation} with the extrapolation pairs $(F_n,G_n) \in \cF$
        and with exponent $q,$ so that we have
        \begin{equation*}
            C_q([W]_{A_q}) = C(d,q,p_0) C_{p_0}\left(C(n,d,q,p_0) [W]_{A_q}^{\max\left\{1,\frac{p_0-1}{q-1}\right\}}\right).
        \end{equation*}
        We have just seen that the family of extrapolation pairs $\cF_q$ satisfies~\eqref{eq:BootstrapInequalities},
        therefore we can apply Theorem~\ref{thm:Extrapolation} and Lemma~\ref{lemma:LittleLqNormsConvexBody}
        to get that for any $1 < p < \infty$ and any matrix weight $W \in A_p$ it holds that
        \begin{equation*}
            \begin{split}
                \norm{\left(\sum_{n=1}^\infty |W(x)^{1/p}F_n(x)|^q\right)^{1/q}}{\Lp{p}}
                &\leq C(d,p) \norm{W^{1/p}F}{\Lp{p}}\\
                \leq  C_{p,q}(&[W]_{A_p}) \norm{W^{1/p}G}{\Lp{p}}\\
                \leq C_{p,q}(&[W]_{A_p}) \norm{\left(\sum_{n=1}^\infty |W(x)^{1/p}G_n(x)|^q\right)^{1/q}}{\Lp{p}}
            \end{split}
        \end{equation*}
        for every sequence $\{(F_n,G_n)\} \subseteq \cF$ satisfying~\eqref{eq:ConvexBodySummability}
        and where
        \begin{equation*}
            \begin{split}
                C_{p,q}([W]_{A_p}) &= C(d,p) C_{p,q}\left(C(n,d,p,q) [W]_{A_p}^{\max\left\{1,\frac{q-1}{p-1}\right\}}\right)\\
                &= C(d,p,q) C_{p_0}\left(C(n,d,p_0,p,q) [W]_{A_p}^{\left(\max\left\{1,\frac{p_0-1}{q-1}\right\}\max\left\{1,\frac{q-1}{p-1}\right\}\right)}\right)
            \end{split}
        \end{equation*}
        if $p_0<\infty,$ and
        \begin{equation*}
            \begin{split}
                C_{p,q}([W]_{A_p}) &= C(d,p) C_{p,q}\left(C(n,d,p,q) [W]_{A_p}^{\max\left\{1,\frac{q-1}{p-1}\right\}}\right)\\
                &= C(d,p,q) C_{p_0}\left(C(n,d,p_0,p,q) [W]_{A_p}^{\left(\frac{1}{q-1}\max\left\{1,\frac{q-1}{p-1}\right\}\right)}\right)
            \end{split}
        \end{equation*}
        if $p_0=\infty,$ with the additional factor $C(d,p,q)$ being in both cases due to the various applications of Lemma~\ref{lemma:LittleLqNormsConvexBody}.
    \end{proof}

    \subsection{Fefferman--Stein vector valued inequalities for weighted maximal operators}
    \label{subsec:MatrixWeightedFeffermanStein}
    Next, we give an application of Theorem~\ref{thm:VectorValuedEstimates}.
    Fix $1 < p < \infty.$
    Given a $(d \times d)$ matrix valued weight $W$ on $\reals^n,$
    one can define the pointwise matrix weighted maximal operator for vector valued functions by
    \begin{equation*}
        M_W f(x) \coloneq \sup_Q \frac{1}{|Q|} \int_Q |W(x)^{1/p}f(y)|\, \mathdm(y) \1_Q(x),
    \end{equation*}
    where $f$ is a locally integrable function taking values on $\complex^d$
    and where the supremum is taken over all cubes with sides parallel to the coordinate axes.
    The maximal operator $M_W$ is called Christ--Goldberg maximal operator,
    since it was introduced and studied by these two authors (see~\cite{Christ_Goldberg_2001} and~\cite{Goldberg_2003}).
    One can also define a weighted maximal operator with reducing operators as
    \begin{equation*}
        \widetilde{M}_W f(x) \coloneq \sup_Q \int_Q |\cW_Q f(y)|\, \mathdm(y) \1_Q(x),
    \end{equation*}
    where $f$ is a locally integrable function with values on $\complex^d,$
    the supremum is taken over bounded cubes with sides parallel to the axes
    and where $\cW_Q$ is the reducing operator of $W$ over $Q$ with exponent $p.$
    We do not make the dependence on the exponent $p$ explicit since it will always be clear from the context.
    
    Both weighted maximal operators play an important role in the theory of matrix weighted norm inequalities.
    In addition, whenever $W$ is a matrix $A_p$ weight, both maximal operators are bounded
    from weighted $\Lp{p}(W)$ to unweighted $\Lp{p}.$
    Isralowitz and Moen~\cite{Isralowitz_Moen_2019} proved that
    \begin{equation}
        \label{eq:PWMaximalOperatorBound}
        \norm{M_W}{\Lp{p}(W)\rightarrow\Lp{p}} \lesssim_{n,d,p} [W]_{A_p}^{1/(p-1)},
    \end{equation}
    while Isralowitz, Kwon and Pott~\cite{Isralowitz_Kwon_Pott2017} showed that
    \begin{equation*}
        \norm{\widetilde{M}_W}{\Lp{p}(W)\rightarrow\Lp{p}} \lesssim_{n,d,p} [W]_{A_p}^{1/(p-1)}.
    \end{equation*}

    We will use a trick due to Bownik and Cruz-Uribe~\cite{Cruz_Uribe_Bownik_Extrapolation} so as to use
    Theorem~\ref{thm:VectorValuedEstimates} (which is a statement about convex body valued functions)
    to show statements about $\complex^d$ vector valued functions.
    In order to apply the previous results for convex body valued operators to these weighted maximal operators,
    we will also need to define the convex body valued analogue of $M_W.$
    Both definitions will be related by the following correspondence between vector valued and convex body valued functions.
    Given a vector valued function $f,$ that is taking values on $\complex^d,$
    we define the convex body valued function $F$ by
    \begin{equation}
        \label{eq:VectorToConvexBody}
        F(x) \coloneq \{\lambda f(x)\colon |\lambda| \leq 1\}.
    \end{equation}
    By construction, $F$ takes values on $\convex(\complex^d)$ and also $|F(x)| = |f(x)|.$
    Thus, any estimate for operators depending on $|f(x)|$ can be studied through
    estimates for an analogous operator acting on $F(x)$ and depending as well on $|F(x)|$ uniquely.
    It is straightforward to check that, for a locally integrable vector valued function $f$ and a cube $Q,$
    it holds that the averaging operator $A_Q$ applied to $F$ given by~\eqref{eq:VectorToConvexBody} can be computed as
    \begin{equation}
        \label{eq:VectorAveragingOperator}
        A_Q F(x) = \left\{\frac{1}{|Q|} \int_Q k(y)f(y)\, \mathdm(y)\colon k \in \Lp{\infty}, \norm{k}{\Lp{\infty}} \leq 1\right\} \1_Q(x).
    \end{equation}
    For this reason, we will abuse notation and denote $A_Q f(x) = A_Q F(x).$
    Also note that, by an appropriate choice of the function $k$ in~\eqref{eq:VectorAveragingOperator}, one can see that
    \begin{equation}
        \label{eq:AveragingModulusEquivalence}
        |A_Q f(x)| \sim_d \frac{1}{|Q|} \int_Q |f(y)|\, \mathdm(y),
    \end{equation}
    and more generally
    \begin{equation*}
        |A_Q F(x)| \sim_d \frac{1}{|Q|} \int_Q |F(y)|\, \mathdm(y)
    \end{equation*}
    for convex body valued functions in general, which can be seen by choosing an appropriate selection function.
    For fixed $1 < p < \infty$ and given a $(d \times d)$-matrix weight $W,$
    the convex body valued analogue of $M_W$ acting on a locally integrably bounded convex body valued function $F$
    is defined by
    \begin{equation*}
        M_W^\cK F(x) \coloneq \overline{\mathrm{conv}\left(\bigcup_Q A_Q(W(x)^{1/p}F(x))\right)},
    \end{equation*}
    where the union is taken over all bounded cubes with sides parallel to the axes.
    Moreover, given a locally integrable vector valued function $f,$ we denote $M_W^\cK f(x) = M_W^\cK F(x),$
    where $F$ is the convex body valued function given by~\eqref{eq:VectorToConvexBody}.
    In addition, as proved in \cite[Lemma 3.4]{Vuorinen2024}, if $W$ is a matrix weight and $f$ a vector valued function,
    linearity and~\eqref{eq:AveragingModulusEquivalence} yield that
    \begin{equation}
        \label{eq:MaximalOperatorsEquivalences}
        |M_W^\cK f(x)| = |W(x)^{1/p} M^\cK f(x)| \sim_d M_W f(x),
    \end{equation}
    where $M^\cK$ denotes the maximal operator weighted by the $(d \times d)$-identity matrix.

    \theoremstyle{plain}
    \newtheorem*{ThmMaximalPointwise}{Theorem~\ref{thm:FeffermanSteinPointwiseWeightedMaximal}}
    \begin{ThmMaximalPointwise}
        Consider a $(d \times d)$ matrix weight $W$ and a sequence of vector valued functions $\{f_n\}.$
        Then, for each $1 < p,q < \infty$ it holds that
        \begin{equation*}
            \norm{\left(\sum_{n=1}^\infty |M_W f_n|^q\right)^{1/q}}{\Lp{p}}
            \leq C(n,d,p,q,[W]_{A_p}) \norm{\left(\sum_{n=1}^\infty |W(x)^{1/p} f_n|^q\right)^{1/q}}{\Lp{p}},
        \end{equation*}
        where
        \begin{equation*}
            C(n,d,p,q,[W]_{A_p}) = C(n,d,p,q) [W]_{A_p}^{\max\left\{\frac{1}{q-1},\frac{1}{p-1}\right\}}.
        \end{equation*}
    \end{ThmMaximalPointwise}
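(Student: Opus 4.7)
The plan is to recognize the Isralowitz--Moen bound~\eqref{eq:PWMaximalOperatorBound} as the bootstrap hypothesis of \Cref{thm:VectorValuedEstimates} applied to a \emph{weight-free} family of extrapolation pairs built from the unweighted convex body maximal operator. Concretely, I would define
\[
\cF \coloneq \bigl\{(M^\cK f, F_f) : f \in \Lp{1}_{\mathrm{loc}}(\reals^n; \complex^d),\, f \not\equiv 0 \bigr\},
\]
where $F_f(x) \coloneq \{\lambda f(x) : |\lambda| \leq 1\}$ is the convex body valued function associated to $f$ via~\eqref{eq:VectorToConvexBody} and $M^\cK$ is the convex body maximal operator with the $(d \times d)$ identity matrix as weight. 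The key point is that neither component of the pair involves any matrix weight, so \Cref{thm:VectorValuedEstimates} is applicable.

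Fix any $1 < q < \infty$ and choose the bootstrap exponent $p_0 = q$. To verify~\eqref{eq:BootstrapInequalities} at this $p_0$, observe that since $W(x)^{1/q}$ is independent of the integration variable $y$, it can be pulled inside the Aumann averages to give $W(x)^{1/q} M^\cK f(x) = M^\cK_W f(x)$, where $M^\cK_W$ corresponds to exponent $q$. Combining this with~\eqref{eq:MaximalOperatorsEquivalences} and~\eqref{eq:PWMaximalOperatorBound} yields, for every $W \in A_q$,
\[
\norm{M^\cK f}{\Lp{q}(W)} = \norm{W^{1/q} M^\cK f}{\Lp{q}} \sim_d \norm{M_W f}{\Lp{q}} \lesssim_{n,d,q} [W]_{A_q}^{1/(q-1)}\, \norm{W^{1/q} f}{\Lp{q}},
\]
and since $|W^{1/q} F_f(x)| = |W^{1/q} f(x)|$, the last norm equals $\norm{F_f}{\Lp{q}(W)}$. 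Hence the bootstrap holds with $C_q(t) \lesssim_{n,d,q} t^{1/(q-1)}$.

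Applying \Cref{thm:VectorValuedEstimates} to $\cF$ with this choice $p_0 = q$ then gives, for any $1 < p < \infty$ and $W \in A_p$,
\[
\norm{\Bigl(\sum_{n=1}^\infty |W^{1/p} M^\cK f_n|^q\Bigr)^{1/q}}{\Lp{p}} \lesssim [W]_{A_p}^{\alpha}\, \norm{\Bigl(\sum_{n=1}^\infty |W^{1/p} F_{f_n}|^q\Bigr)^{1/q}}{\Lp{p}},
\]
where substituting $p_0 = q$ and $C_q(t) \sim t^{1/(q-1)}$ into the constant from \Cref{thm:VectorValuedEstimates} makes the composite exponent collapse to $\alpha = \tfrac{1}{q-1}\max\{1, \tfrac{q-1}{p-1}\} = \max\{\tfrac{1}{q-1}, \tfrac{1}{p-1}\}$. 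Finally, converting both sides back to vector valued quantities through $|W^{1/p} M^\cK f_n| = |M^\cK_W f_n| \sim_d M_W f_n$ (with $M_W$ now corresponding to exponent $p$) and $|W^{1/p} F_{f_n}| = |W^{1/p} f_n|$ delivers the claimed inequality.

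The only real subtlety I foresee is the choice $p_0 = q$, which must be allowed to vary with the target exponent $q$. Any fixed choice of $p_0$ would produce a power of $[W]_{A_p}$ strictly larger than $\max\{1/(q-1), 1/(p-1)\}$; it is precisely the flexibility of the Isralowitz--Moen bound across \emph{all} $p_0 \in (1,\infty)$ that makes the sharp exponent recoverable from the extrapolation machinery.
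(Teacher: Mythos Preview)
Your proposal is correct and follows essentially the same approach as the paper: build the weight-free family $(M^\cK f, F_f)$, take $p_0=q$, feed the Isralowitz--Moen bound~\eqref{eq:PWMaximalOperatorBound} into \Cref{thm:VectorValuedEstimates}, and read off the exponent $\max\{1/(q-1),1/(p-1)\}$. The only cosmetic difference is that the paper restricts the family to $f\in \Lp{\infty}_{\mathrm{c}}(\reals^n;\complex^d)$ rather than $\Lp{1}_{\mathrm{loc}}$, so that the left-hand side of the bootstrap inequality is automatically finite and the Isralowitz--Moen estimate applies without caveat, then recovers general $f$ by density; your formulation works too under the standard extrapolation convention that~\eqref{eq:BootstrapInequalities} is only required when the left-hand side is finite.
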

    \begin{proof}
        In order to apply Theorem~\ref{thm:VectorValuedEstimates},
        we need to construct a family $\cF$ of extrapolation pairs for which
        the left-hand side of~\eqref{eq:BootstrapInequalities} is finite.
        To this end, we will restrict ourselves to vector valued functions $f \in \Lp{\infty}_\mathrm{c}(\reals^n; \complex^d)$
        (compactly supported essentially bounded functions),
        and a density argument will yield the conclusion for $f \in \Lp{p}(\reals^n; \complex^d).$
        For each vector valued function $f \in \Lp{\infty}_\mathrm{c},$ we consider the extrapolation pair $(M^\cK F,F),$
        where $F$ is the convex body valued function defined by~\eqref{eq:VectorToConvexBody}.

        Consider the given $1 < q < \infty.$
        It is clear that the left-hand side of~\eqref{eq:BootstrapInequalities} is finite for every pair $(M^\cK F,F) \in \cF$ for $p_0 = q,$
        and that~\eqref{eq:BootstrapInequalities} is satisfied if we take $C_{p_0}(t) = C_q(t) = C(n,d,q) t^{\frac{1}{q-1}}.$
        Indeed, because of~\eqref{eq:MaximalOperatorsEquivalences}, for any matrix $A_q$ weight $W$ we have that
        \begin{equation*}
                \norm{M^\cK F}{\Lp{q}(W)} \sim_d \norm{M_W f}{\Lp{q}}
                \leq C(n,d,q) [W]_{A_q}^{1/(q-1)} \norm{f}{\Lp{q}(W)} < \infty,
        \end{equation*}
        where we have used estimate~\eqref{eq:PWMaximalOperatorBound} and
        that $\Lp{\infty}_\mathrm{c}(\reals^n; \complex^d) \subseteq \Lp{q}(\reals^n; \complex^d).$

        Now, Theorem~\ref{thm:VectorValuedEstimates} gives that
        \begin{equation}
            \label{eq:FeffermanSteinPWMOConvexBodies}
            \norm{\left(\sum_{n=1}^\infty |W^{1/p} M^\cK F_n|^{q_1}\right)^{1/{q_1}}}{\Lp{p}}
            \leq C(n,d,p_0,p,q_1,[W]_{A_p}) \norm{\left(\sum_{n=1}^\infty |W^{1/p} F_n|^{q_1}\right)^{1/{q_1}}}{\Lp{p}}
        \end{equation}
        for any $1 < p,q_1 < \infty$ (with $q_1$ possibly different if $q = p_0$) and any matrix weight $W \in A_p,$
        where $\{(M^\cK F_n, F_n)\}$ is a sequence in $\cF$ and where
        \begin{align*}
            &C(n,d,p_0,p,q_1,[W]_{A_p})\\
            &= C(d,p) C_{p_0}\left(C(n,d,p_0,p,q_1) [W]_{A_p}^{\left(\max\left\{1,\frac{p_0-1}{q_1-1}\right\}\max\left\{1,\frac{q_1-1}{p-1}\right\}\right)}\right).
        \end{align*}
        In particular, if we restrict our attention to $q_1 = p_0 = q,$ we can take
        \begin{equation*}
            C(n,d,q,p,[W]_{A_p}) = C(n,d,q,p) [W]_{A_p}^{\max\left\{\frac{1}{q-1},\frac{1}{p-1}\right\}}.
        \end{equation*}
        Also observe that for each sequence $\{(M^\cK F_n, F_n)\} \subseteq \cF,$
        we can choose a sequence $\{f_n\} \subseteq \Lp{\infty}_\mathrm{c} (\reals^n; \complex^d)$ such that
        $F_n$ is obtained from $f_n$ using~\eqref{eq:VectorToConvexBody} for every $n \geq 1.$
        On one hand observe that using that $|W^{1/p} F_n| = |W^{1/p} f_n|$
        for the sequence $\{f_n\} \subseteq \Lp{\infty}_\mathrm{c} (\reals^n; \complex^d)$ that we chose previously,
        we get that
        \begin{equation}
            \label{eq:RightConvexToVectorEquivalence}
            \norm{\left(\sum_{n=1}^\infty |W^{1/p} F_n|^q\right)^{1/q}}{\Lp{p}}
            = \norm{\left(\sum_{n=1}^\infty |W^{1/p} f_n|^q\right)^{1/q}}{\Lp{p}}.
        \end{equation}
        On the other hand, using~\eqref{eq:MaximalOperatorsEquivalences}, we see that
        \begin{equation}
            \label{eq:LeftConvexToVectorEquivalences}
            \norm{\left(\sum_{n=1}^\infty |W^{1/p} M^\cK F_n|^q\right)^{1/q}}{\Lp{p}}
            \sim_{d} \norm{\left(\sum_{n=1}^\infty |M_W f_n|^q\right)^{1/q}}{\Lp{p}}.
        \end{equation}
        Finally, replacing~\eqref{eq:RightConvexToVectorEquivalence} and~\eqref{eq:LeftConvexToVectorEquivalences}
        into~\eqref{eq:FeffermanSteinPWMOConvexBodies}, we get the desired vector valued estimates
        for the Christ--Goldberg maximal operator, as we wanted to show.
    \end{proof}
    
    The analogous result for the operator $\widetilde{M}_W$ is a consequence of Theorem~\ref{thm:FeffermanSteinPointwiseWeightedMaximal}.
    Note here that the exponent that we get for $[W]_{A_p}$ in this case
    is larger than that of Theorem~\ref{thm:FeffermanSteinPointwiseWeightedMaximal}.
    This is different from what is known in the scalar context,
    where the sharp exponent for the Christ--Goldberg maximal auxiliary operator
    is the same as for the Christ--Goldberg maximal operator
    (see~\cite[Lemma~3.5]{Isralowitz_Moen_2019}).
    The reason for this larger exponent in our case is that first we bound
    $\widetilde{M}_W f$ by the Hardy--Littlewood maximal function $M(M_W f)$
    of the Christ--Goldberg maximal operator,
    and then we apply Theorem~\ref{thm:FeffermanSteinPointwiseWeightedMaximal}.
    In the scalar setting, the proof of the analogous result relies on
    the reverse Hölder inequality for $A_p$ weights.
    We do not know if the exponent appearing in
    Theorem~\ref{thm:FeffermanSteinReducingWeightedMaximal} can be improved.

    \begin{theorem}
        \label{thm:FeffermanSteinReducingWeightedMaximal}
        Consider a $(d \times d)$ matrix weight $W$ and a sequence of vector valued functions $\{f_n\}.$
        Then, for each $1 < p,q < \infty$ it holds that
        \begin{equation*}
            \norm{\left(\sum_{n=1}^\infty |\widetilde{M}_W f_n|^q\right)^{1/q}}{\Lp{p}}
            \leq C(n,d,p,q,[W]_{A_p}) \norm{\left(\sum_{n=1}^\infty |W(x)^{1/p} f_n|^q\right)^{1/q}}{\Lp{p}},
        \end{equation*}
        where
        \begin{equation*}
            C(n,d,p,q,[W]_{A_p}) = C(n,d,p,q) [W]_{A_p}^{\frac{1}{p} + \max\left\{\frac{1}{q-1},\frac{1}{p-1}\right\}}.
        \end{equation*}
    \end{theorem}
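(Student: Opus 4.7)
The plan is to reduce Theorem~\ref{thm:FeffermanSteinReducingWeightedMaximal} to Theorem~\ref{thm:FeffermanSteinPointwiseWeightedMaximal} via a pointwise comparison between $\widetilde{M}_W f$ and the usual $M_W f$, losing only a factor of $[W]_{A_p}^{1/p}$. Concretely, I would first establish the pointwise estimate
\begin{equation*}
    \widetilde{M}_W f(x) \lesssim_{p,d} [W]_{A_p}^{1/p}\, M(M_W f)(x),
\end{equation*}
where $M$ is the classical (unweighted) Hardy--Littlewood maximal operator. To prove it, fix a cube $Q \ni x$ and apply Lemma~\ref{l: replace inverse by prime}(1) with $e = f(y)$; since the local quantity $C_Q$ is bounded by $[W]_{A_p}$ uniformly in $Q$, this yields
\begin{equation*}
    |\cW_Q f(y)| \lesssim_{p,d} [W]_{A_p}^{1/p}\, \frac{1}{|Q|}\int_Q |W(z)^{1/p} f(y)|\, \mathdm(z).
\end{equation*}
Averaging in $y\in Q$ and swapping the order of integration via Fubini, the inner average $\frac{1}{|Q|}\int_Q |W(z)^{1/p} f(y)|\, \mathdm(y)$ is, for $z \in Q$, dominated by $M_W f(z)$ (by testing against the cube $Q$ itself). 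Averaging the resulting bound in $z\in Q$ and taking the supremum over $Q \ni x$ produces $M(M_W f)(x)$ on the right-hand side.

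Next, I would apply this pointwise bound termwise to each $f_n$ and take the $\ell^q$ norm and $\Lp{p}$ norm:
\begin{equation*}
    \left\|\left(\sum_{n=1}^\infty |\widetilde{M}_W f_n|^q\right)^{1/q}\right\|_{\Lp{p}}
    \lesssim_{p,d} [W]_{A_p}^{1/p}\, \left\|\left(\sum_{n=1}^\infty |M(M_W f_n)|^q\right)^{1/q}\right\|_{\Lp{p}}.
\end{equation*}
The classical (unweighted) Fefferman--Stein vector valued inequality \eqref{classical Fefferman Stein} applied to the sequence $\{M_W f_n\}$ controls the right-hand side by a constant $C(n,p,q)$ times $\|(\sum_n |M_W f_n|^q)^{1/q}\|_{\Lp{p}}$. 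A direct application of Theorem~\ref{thm:FeffermanSteinPointwiseWeightedMaximal} then bounds this last quantity by
\begin{equation*}
    C(n,d,p,q)\, [W]_{A_p}^{\max\{1/(q-1),\,1/(p-1)\}}\, \left\|\left(\sum_{n=1}^\infty |W^{1/p} f_n|^q\right)^{1/q}\right\|_{\Lp{p}}.
\end{equation*}
Combining the two powers of $[W]_{A_p}$ gives exactly the exponent $\frac{1}{p} + \max\{\frac{1}{q-1},\frac{1}{p-1}\}$ claimed in the statement.

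The main (and essentially only) nontrivial step is the pointwise comparison $\widetilde{M}_W f \lesssim [W]_{A_p}^{1/p} M(M_W f)$, but this is a routine application of Lemma~\ref{l: replace inverse by prime}(1) together with Fubini; no extrapolation or convex body machinery is needed, since all of that work has already been absorbed into Theorem~\ref{thm:FeffermanSteinPointwiseWeightedMaximal}. In particular, the factor $[W]_{A_p}^{1/p}$ appearing in the final exponent comes \emph{only} from passing from $\cW_Q f(y)$ to its average bound via the $A_p$ constant; the rest of the $[W]_{A_p}$ dependence is inherited directly from the pointwise maximal operator bound already proved.
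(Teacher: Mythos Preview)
Your proposal is correct and follows essentially the same route as the paper: establish the pointwise bound $\widetilde{M}_W f \lesssim_{p,d} [W]_{A_p}^{1/p} M(M_W f)$ via Lemma~\ref{l: replace inverse by prime}(1) and Fubini, then apply the classical Fefferman--Stein inequality followed by Theorem~\ref{thm:FeffermanSteinPointwiseWeightedMaximal}. The paper's version differs only cosmetically, writing out the intermediate $L^p$-average step explicitly before invoking the $A_p$ bound.
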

    \begin{proof}
        Just observe that
        \begin{equation*}
            \begin{split}
                \widetilde{M}_W f(x) &= \sup_{Q\ni x} \frac{1}{|Q|} \int_Q |\cW_Q f(y)|\, \mathdm(y)\\
                &\sim_d \sup_{Q\ni x} \frac{1}{|Q|} \int_Q \left( \frac{1}{|Q|} \int_Q |W(z)^{1/p} f(y)|^p\, \mathdm(z)\right)^{1/p}\, \mathdm(y)\\
                &\leq C(d,p)[W]_{A_p}^{1/p} \sup_{Q\ni x} \frac{1}{|Q|} \int_Q \frac{1}{|Q|} \int_Q |W(z)^{1/p} f(y)|\, \mathdm(z)\, \mathdm(y)\\
                &\leq C(d,p)[W]_{A_p}^{1/p} \sup_{Q\ni x} \frac{1}{|Q|} \int_Q M_W f(z)\, \mathdm(z)\\
                &= C(d,p)[W]_{A_p}^{1/p} M(M_W f) (x).
            \end{split}
        \end{equation*}
        Therefore, we get that
        \begin{equation*}
                \norm{\left(\sum_{n=1}^\infty |\widetilde{M}_W f_n|^q\right)^{1/q}}{\Lp{p}}
            \leq C(d,p) [W]_{A_p}^{1/p} \norm{\left(\sum_{n=1}^\infty |M(M_W f_n)|^q\right)^{1/q}}{\Lp{p}}.
        \end{equation*}
        An application of the classical Fefferman--Stein vector valued inequalities for the maximal operator
        followed by the use of Theorem~\ref{thm:FeffermanSteinPointwiseWeightedMaximal}
        yields the desired result.
    \end{proof}

    \section{Two matrix weighted biparameter product BMO}
    \label{s:MatrixWeighedProductBMO}

    In this second main part of the present work dyadic grids of cubes in $\reals^n$, product dyadic grids in $\reals^n\times\reals^m$ and the associated Haar systems play a fundamental role. See Section~\ref{s:background} for the notation.

    Fix a product dyadic grid $\bfD=\cD^1\times\cD^2$ in $\reals^n\times\reals^m.$ Let $1<p<\infty,$ and let $U,V$ be biparameter $(d\times d)$ matrix $\bfD$-dyadic $A_p$ weights on $\reals^n\times\reals^m.$

    Denote by $B$ any sequence $\{B_{R}^{\e}\}_{\substack{R\in\bfD\\\e\in\cE}}$ in $\mathrm{M}_{d}(\complex).$ We emphasize that $\cE$ stands for the set of biparameter signatures, $\cE=\cE^1\times\cE^2,$ where $\cE^1=\{0,1\}^{n}\setminus\{(1,\ldots,1)\}$ and $\cE^2=\{0,1\}^m\setminus\{(1,\ldots,1)\}.$ We define
    \begin{equation*}
        \Vert B\Vert_{\BMOprodD(U,V,p)}\coloneq \sup_{\Omega}\frac{1}{|\Omega|^{1/2}}\bigg(\sum_{\substack{R\in\bfD(\Omega)\\\e\in\cE}}|\cV_{R}B_{R}^{\e}\cU_{R}^{-1}|^2\bigg)^{1/2},
    \end{equation*}
    where the supremum ranges over all Lebesgue-measurable subsets $\Omega$ of $\reals^{n+m}$ of nonzero finite measure, and all reducing operators are taken with respect to exponent $p.$ Note that
    \begin{equation*}
        |\cV_{R}P\cU^{-1}_{R}|=|\cU^{-1}_{R}P^{\ast}\cV_{R}|\sim_{p,d}|\cU'_{R}P^{\ast}(\cV'_{R})^{-1}|,\qquad\forall R\in\bfD,~\forall P\in \mathrm{M}_{d}(\complex).
    \end{equation*}
    Therefore
    \begin{equation*}
        \Vert B\Vert_{\BMOprodD(U,V,p)}\sim_{p,d}\Vert B^{\ast}\Vert_{\BMOprodD(V',U',p')}.
    \end{equation*}

    \subsection{\texorpdfstring{$\mathrm{H}^1$-BMO duality}{H1-BMO duality}} The main goal of this subsection is to prove Theorem \ref{thm:H1BMOduality}. We split the proof in Propositions~\ref{prop:BMOtofunctional} and~\ref{prop:functionaltoBMO} below, each treating one of the bounds in the equivalence stated in Theorem~\ref{thm:H1BMOduality}.

    We define $\mathrm{H}^1_{\bfD}(U,V,p)$ as the set of all sequences $\Phi=\{\Phi_{R}^{\e}\}_{\substack{R\in\bfD\\\e\in\cE}}$ in $\mathrm{M}_{d}(\complex)$ such that
    \begin{equation*}
        \Vert\Phi\Vert_{\mathrm{H}^1_{\bfD}(U,V,p)}\coloneq \bigg\Vert\bigg(\sum_{\substack{R\in\bfD\\\e\in\cE}}|V^{-1/p}\Phi_{R}^{\e}\cU_{R}|^2\frac{\1_{R}}{|R|}\bigg)^{1/2}\bigg\Vert_{\Lp{1}(\reals^{n+m})}<\infty.
    \end{equation*}
    This is the direct biparameter analog of the one-parameter two matrix weighted $\mathrm{H}^1$ norm defined in \cite{Isralowitz_2017}. It is easy to check that $(\mathrm{H}^1_{\bfD}(U,V,p),\Vert\var\Vert_{\mathrm{H}^1_{\bfD}(U,V,p)})$ is a Banach space.

    \begin{proposition}
        \label{prop:BMOtofunctional}
    Let $B\in\emph{BMO}_{\emph{prod},\bfD}(U,V,p).$ Then, the linear functional $\ell_{B}:\mathrm{H}^1_{\bfD}(U,V,p)\rightarrow\complex$ given by
    \begin{equation*}
        \ell_{B}(\Phi)\coloneq \sum_{\substack{R\in\bfD\\\e\in\cE}}\mathrm{tr}((B_{R}^{\e})^{\ast}\Phi_{R}^{\e}),\quad\Phi\in \mathrm{H}^1_{\bfD}(U,V,p)
    \end{equation*}
    is well-defined and bounded with $\Vert\ell_{B}\Vert\lesssim_{p,d}[V]_{A_p,\bfD}^{2/p}\Vert B\Vert_{\emph{BMO}_{\emph{prod},\bfD}(U,V,p)}.$
    \end{proposition}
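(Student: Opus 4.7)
The plan is to estimate $\ell_B(\Phi)$ via matrix-trace Cauchy--Schwarz after inserting reducing operators, then use a pointwise insertion of $V^{1/p}(x)V^{-1/p}(x)=I$, and finally bound the BMO-side square function in $\Lp{\infty}$ using \Cref{thm:FeffermanSteinReducingWeightedMaximal} and a biparameter Carleson embedding. Absolute summability of the bounding sum will also give well-definedness of $\ell_B$.

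Setting $\beta_R^\e:=\cV_R B_R^\e \cU_R^{-1}$, the quantity appearing in $\|B\|_{\BMOprodD(U,V,p)}$, cyclicity of the trace writes $\mathrm{tr}((B_R^\e)^*\Phi_R^\e) = \mathrm{tr}(\beta_R^{\e,*}\cV_R^{-1}\Phi_R^\e\cU_R)$, and inserting $V^{1/p}(x)V^{-1/p}(x)=I$ for $x\in R$ together with the self-adjointness of $V^{1/p}(x)$ and $\cV_R^{-1}$ yields
\begin{equation*}
|\mathrm{tr}((B_R^\e)^*\Phi_R^\e)| \le d\,|V^{1/p}(x)\cV_R^{-1}\beta_R^\e|\cdot|V^{-1/p}(x)\Phi_R^\e\cU_R|,\quad x\in R,
\end{equation*}
via the matrix-trace inequality $|\mathrm{tr}(A^*B)|\le d|A||B|$. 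Averaging over $x\in R$, turning the sum over $(R,\e)$ into an integral using $1=\frac{1}{|R|}\int_R\1_R\,\mathdm$, and applying pointwise Cauchy--Schwarz in $(R,\e)$ gives
\begin{equation*}
|\ell_B(\Phi)|\le d\int T_\beta(x)\,S_\Phi(x)\,\mathdm(x),
\end{equation*}
where $S_\Phi(x)^2 := \sum_{R,\e}|V^{-1/p}(x)\Phi_R^\e\cU_R|^2\,\1_R(x)/|R|$, so that $\|S_\Phi\|_{\Lp{1}}=\|\Phi\|_{\mathrm{H}^1_\bfD(U,V,p)}$, and $T_\beta(x)^2:=\sum_{R,\e}|V^{1/p}(x)\cV_R^{-1}\beta_R^\e|^2\,\1_R(x)/|R|$.

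The crux is to prove the $\Lp{\infty}$ bound $\|T_\beta\|_{\Lp{\infty}}\lesssim_{p,d}[V]_{A_p,\bfD}^{2/p}\|B\|_{\BMOprodD(U,V,p)}$. My plan is to factor $|V^{1/p}(x)\cV_R^{-1}\beta_R^\e|\le|V^{1/p}(x)\cV_R^{-1}||\beta_R^\e|$, decompose column-wise via $|A|^2\sim_d\sum_k|Ae_k|^2$, and use the reducing-operator comparison of \Cref{l: replace inverse by prime} to relate $|V^{1/p}(x)\cV_R^{-1}e_k|$ to an averaged form over $R$, losing one factor of $[V]_{A_p,\bfD}^{1/p}$. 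The resulting expression should then be controlled by the reducing-operator Fefferman--Stein inequality \Cref{thm:FeffermanSteinReducingWeightedMaximal} in its biparameter tensor-product form, picking up a second $[V]_{A_p,\bfD}^{1/p}$, and combined with the Carleson condition $\sum_{R\subseteq\Omega,\e}|\beta_R^\e|^2\le\|B\|_{\BMOprodD(U,V,p)}^2|\Omega|$ via a biparameter Chang--Fefferman Carleson embedding. A final Hölder estimate $|\ell_B(\Phi)|\le d\|T_\beta\|_{\Lp{\infty}}\|S_\Phi\|_{\Lp{1}}$ closes the argument.

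The hard part will be this $\Lp{\infty}$ bound: a naive pointwise estimate $\sum_{R\ni x,\e}|\beta_R^\e|^2/|R|\le\|B\|_{\BMOprodD(U,V,p)}^2$ fails in the product setting, as it would force $B$ into the strictly smaller dyadic rectangular BMO. One must genuinely exploit the Chang--Fefferman Carleson condition over \emph{arbitrary} measurable subsets $\Omega\subseteq\reals^{n+m}$, which is precisely where the matrix-weighted vector-valued Fefferman--Stein inequalities developed in \Cref{sec:extrapolation_vector_valued} become indispensable: they permit the column-by-column reduction to essentially scalar-type Carleson estimates while carrying the correct matrix-$A_p$ dependence.
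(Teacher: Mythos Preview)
Your proposal has a genuine gap: the $\Lp{\infty}$ bound on $T_\beta$ that you aim for is simply false, even in the scalar unweighted one-parameter case. If $d=1$, $U=V\equiv 1$, and $p=2$, then $T_\beta(x)^2=\sum_{R\ni x,\e}|b_R^\e|^2/|R|$ is exactly the dyadic square function of $b$ at $x$. A BMO function need not have bounded square function (take $b=\log|\cdot|$ on $\reals$), so $T_\beta\notin\Lp{\infty}$ in general. Your own diagnosis that the naive pointwise estimate forces rectangular BMO is correct, but the proposed remedy---Carleson embedding plus the matrix Fefferman--Stein inequalities of \Cref{sec:extrapolation_vector_valued}---produces $\Lp{q}$ bounds for finite $q$, never $\Lp{\infty}$. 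No amount of column-by-column reduction or use of \Cref{l: replace inverse by prime} will change this, because the obstruction is already present in the scalar model.

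The paper's proof avoids this entirely by \emph{not} separating into an $\Lp{\infty}\times\Lp{1}$ pairing. Instead it decomposes the sum over rectangles according to level sets $\Omega_k=\{S_\Phi>2^k\}$ of the $\mathrm{H}^1$ square function, sorting each $R$ into a collection $\cB_k$ by the fraction $|R\cap\Omega_k|/|R|$. On each $\cB_k$ one applies Cauchy--Schwarz and then the Carleson condition from the BMO norm (using that $\mathrm{sh}(\cB_k)\subseteq\widetilde{\Omega}_k$), while the $\Phi$-side sum over $\cB_k$ is controlled by integrating $S_\Phi^2$ over $\widetilde{\Omega}_k\setminus\Omega_{k+1}$ and using a scalar $A_2$ argument on the weight $|V^{-1/p}v|$ to pass from $R\setminus\Omega_{k+1}$ back to $R$. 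This level-set decomposition is the standard mechanism by which the Chang--Fefferman Carleson condition over arbitrary open sets is actually deployed in $\mathrm{H}^1$--BMO duality; your approach bypasses it and therefore cannot close.
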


    \begin{proof}
        The proof of this result is essentially the same as in
        the one-parameter case for $p = 2,$
        which corresponds to the first half of the proof of \cite[Theorem 1.3]{Isralowitz_2017}.
        The necessary tools for the argument have been developed previously in other papers.
        We include the full proof for the reader's convenience,
        indicating were to find the necessary facts to complete the arguments
        in this more general context.
        
        Let $\Phi\in \mathrm{H}^1_{\bfD}(U,V,p)$ be arbitrary. We show that the sum
        \begin{equation*}
            \ell_{B}(\Phi)\coloneq \sum_{\substack{R\in\bfD\\\e\in\cE}}\tr((B_{R}^{\e})^{\ast}\Phi_{R}^{\e})
        \end{equation*}
        converges absolutely with
        \begin{equation*}
            |\ell_{B}(\Phi)|\lesssim_{p,d}[V]_{A_p,\bfD}^{2/p}\Vert B\Vert_{\BMOprodD(U,V,p)}\Vert\Phi\Vert_{\mathrm{H}^1_{\bfD}(U,V,p)}.
        \end{equation*}
        We have
        \begin{align*}
            &\sum_{\substack{R\in\bfD\\\e\in\cE}}|\tr((B_{R}^{\e})^{\ast}\Phi_{R}^{\e})|
            =\sum_{\substack{R\in\bfD\\\e\in\cE}}|\tr(\cV_{R}^{-1}\Phi_{R}^{\e}\cU_{R}\cU_{R}^{-1}(B_{R}^{\e})^{\ast}\cV_{R})|\\
            &\lesssim_{d}\sum_{\substack{R\in\bfD\\\e\in\cE}}|\cV_{R}^{-1}\Phi_{R}^{\e}\cU_{R}\cU_{R}^{-1}(B_{R}^{\e})^{\ast}\cV_{R}|
            \leq\sum_{\substack{R\in\bfD\\\e\in\cE}}|\cV_{R}^{-1}\Phi_{R}^{\e}\cU_{R}|\cdot|\cU_{R}^{-1}(B_{R}^{\e})^{\ast}\cV_{R}|\\
            &\lesssim_{d}\sum_{\substack{R\in\bfD\\\e\in\cE}}|\cV'_{R}\Phi_{R}^{\e}\cU_{R}|\cdot|\cV_{R}B_{R}^{\e}\cU_{R}^{-1}|.
        \end{align*}
        The passage from the second line to the third one can be done using that
        $|\cV^{-1}_R e| \leq  |\cV'_R e|$ for any $e \in \complex^d.$
        In particular, if this fact is applied to the matrix norm $|\cV_{R}^{-1}\Phi_{R}^{\e}\cU_{R}|,$ one gets the stated inequality with an implicit constant depending on $d.$
        Set now
        \begin{equation*}
            F\coloneq \bigg(\sum_{\substack{R\in\bfD\\\e\in\cE}}|V^{-1/p}\Phi_{R}^{\e}\cU_{R}|^2\frac{\1_{R}}{|R|}\bigg)^{1/2}
        \end{equation*}
        and define
        \begin{equation*}
            \Omega_k\coloneq \{ F>2^k\},\qquad k\in\integer,
        \end{equation*}
        \begin{equation*}
            \cB_k\coloneq \left\{ R\in\bfD:~|R\cap\Omega_{k+1}|\leq\frac{1}{2}|R|<|R\cap\Omega_{k}|\right\},\qquad k\in\integer,
        \end{equation*}
        \begin{equation*}
            \widetilde{\Omega}_k\coloneq \left\{ M_{\bfD}(\1_{\Omega_k})>\frac{1}{2}\right\},\qquad k\in\integer.
        \end{equation*}
        Clearly $\Omega_k\subseteq\widetilde{\Omega}_k$ up to a set of zero measure, and in fact $|\widetilde{\Omega}_k|\sim|\Omega_k|,$ for all $k\in\integer.$ It is also obvious that $R\subseteq\widetilde{\Omega}_k,$ for all $R\in\cB_k,$ for all $k\in\integer.$        
        Using the fact that $F\in\Lp{1}(\reals^{n+m}),$ it is easy to see that that for every $R\in\bfD$ with $\Phi_{R}^{\e}\neq0$ for some $\e\in\cE$ there exists $k\in\integer$ such that $R\in\cB_k.$ Thus, we have
        \begin{align*}
            |\ell_{B}(\Phi)|
            &\lesssim_{p,d} \sum_{k\in\integer}\sum_{\substack{R\in\cB_k\\\e\in\cE}}|\cV'_{R}\Phi_{R}^{\e}\cU_{R}|\cdot|\cV_{R}B_{R}^{\e}\cU_{R}^{-1}|\\
            &\leq\sum_{k\in\integer}\bigg(\sum_{\substack{R\in\cB_k\\\e\in\cE}}|\cV'_{R}\Phi_{R}^{\e}\cU_{R}|^2\bigg)^{1/2}\bigg(\sum_{\substack{R\in\cB_k\\\e\in\cE}}|\cV_{R}B_{R}^{\e}\cU_{R}^{-1}|^2\bigg)^{1/2}\\
            &\leq\Vert B\Vert_{\BMOprodD(U,V,p)}\sum_{k\in\integer}\bigg(\sum_{\substack{R\in\cB_k\\\e\in\cE}}|\cV'_{R}\Phi_{R}^{\e}\cU_{R}|^2\bigg)^{1/2}|\text{sh}(\cB_k)|^{1/2}\\
            &\lesssim\Vert B\Vert_{\BMOprodD(U,V,p)}\sum_{k\in\integer}\bigg(\sum_{\substack{R\in\cB_k\\\e\in\cE}}|\cV'_{R}\Phi_{R}^{\e}\cU_{R}|^2\bigg)^{1/2}|\widetilde{\Omega}_k|^{1/2}.
        \end{align*}
        We will now prove that
        \begin{equation}
            \label{goal}
            \sum_{\substack{R\in\cB_k\\\e\in\cE}}|\cV'_{R}\Phi_{R}^{\e}\cU_{R}|^2\lesssim_{p,d}[V]_{A_p,\bfD}^{4/p}2^{2k}|\widetilde{\Omega}_{k}|,\qquad\forall k\in\integer.
        \end{equation}
        This will be enough to conclude the proof, because assuming it we will get that the sum over $k$ of the terms in the left-hand side of~\eqref{goal} will be bounded by
        \begin{equation*}
            [V]_{A_p,\bfD}^{2/p}\sum_{k\in\integer}2^{k}|\widetilde{\Omega}_{k}|
            \sim[V]_{A_p,\bfD}^{2/p}\Vert F\Vert_{\Lp{1}(\reals^{n+m})}.
        \end{equation*}

        Fix now $k\in\integer.$ We begin by noticing that
        \begin{equation*}
            \int_{\widetilde{\Omega}_k\setminus\Omega_{k+1}}F(x)^2\mathdm(x)\leq 2^{2k+2}|\widetilde{\Omega}_k\setminus\Omega_{k+1}|\leq 2^{2k+2}|\widetilde{\Omega}_k|,
        \end{equation*}
        by the definition of $\Omega_{k+1}.$ Moreover, denoting by $e_1,\ldots,e_d$ the standard basis vectors in $\complex^{d},$ we have
        \begin{align}
            \label{FSquaredLower}
            &\int_{\widetilde{\Omega}_k\setminus\Omega_{k+1}}F(x)^2\mathdm(x)\geq
            \int_{\widetilde{\Omega}_k\setminus\Omega_{k+1}}\sum_{\substack{R\in\cB_k\\\e\in\cE}}|V(x)^{-1/p}\Phi_{R}^{\e}\cU_{R}|^2\frac{\1_{R}(x)}{|R|}\mathdm(x)\\
            &=\sum_{\substack{R\in\cB_k\\\e\in\cE}}\frac{1}{|R|}\int_{R\setminus\Omega_{k+1}}|V(x)^{-1/p}\Phi_{R}^{\e}\cU_{R}|^2\mathdm(x)\\
            &\sim_{d}\sum_{j=1}^{d}\sum_{\substack{R\in\cB_k\\\e\in\cE}}\frac{1}{|R|}\int_{R\setminus\Omega_{k+1}}|V(x)^{-1/p}\Phi_{R}^{\e}\cU_{R}e_{j}|^2\mathdm(x)\\
            &\geq\sum_{j=1}^{d}\sum_{\substack{R\in\cB_k\\\e\in\cE}}\frac{|R\setminus\Omega_{k+1}|}{|R|}\left(\frac{1}{|R\setminus\Omega_{k+1}|}\int_{R\setminus\Omega_{k+1}}|V(x)^{-1/p}\Phi_{R}^{\e}\cU_{R}e_{j}|\mathdm(x)\right)^2\\
            &\sim\sum_{j=1}^{d}\sum_{\substack{R\in\cB_k\\\e\in\cE}}\left(\frac{1}{|R\setminus\Omega_{k+1}|}\int_{R\setminus\Omega_{k+1}}|V(x)^{-1/p}\Phi_{R}^{\e}\cU_{R}e_{j}|\mathdm(x)\right)^2\\
            &\sim\sum_{j=1}^{d}\sum_{\substack{R\in\cB_k\\\e\in\cE}}\left(\frac{1}{|R|}\int_{R\setminus\Omega_{k+1}}|V(x)^{-1/p}\Phi_{R}^{\e}\cU_{R}e_{j}|\mathdm(x)\right)^2.
        \end{align}
        Let $v\in\complex^d$ be arbitrary. Consider the function
        \begin{equation*}
            w\coloneq |V^{-1/p}v|^{p'}.
        \end{equation*}
        Then, $w$ is a scalar $\bfD$-dyadic biparameter $A_{p'}$ weight on $\reals^n\times\reals^m$ with
        \begin{equation*}
            [w]_{A_{p'},\bfD}\lesssim_{p,d}[V']_{A_{p'},\bfD}\sim_{p,d}[V]_{A_p,\bfD}^{p'/p},
        \end{equation*}
        because $V^{-1/p}$ is a $(d\times d)$ matrix $\bfD$-dyadic biparameter $A_p$ weight (see for example Lemma 3.2 in \cite{DKPS2024}). It is then well-known that
        \begin{equation*}
            [w^{1/p'}]_{A_2,\bfD}\leq[w]_{A_{p'},\bfD}^{1/p'}
        \end{equation*}
        and
        \begin{equation*}
            \langle w\rangle_{R}^{1/p'}\leq[w]_{A_{p'},\bfD}^{1/p'}\langle w^{1/p'}\rangle_{R},\quad\forall R\in\bfD
        \end{equation*}
        (see for example Subsection 2.3.3 in \cite{Kakaroumpas_Soler_2022}). Using Jensen's inequality and the definition of the $A_2$ characteristic, it follows that
        \begin{equation*}
            \frac{w^{1/p'}(R\setminus\Omega_{k+1})}{w^{1/p'}(R)}\geq[w^{1/p'}]_{A_2,\bfD}^{-1}\frac{|R\setminus\Omega_{k+1}|}{|R|}\geq[w^{1/p'}]_{A_2,\bfD}^{-1}\cdot\frac{1}{2},
        \end{equation*}
        so
        \begin{equation*}
            \int_{R\setminus\Omega_{k+1}}|V(x)^{-1/p}v|\mathdm(x)\gtrsim_{p,d} [V]_{A_p,\bfD}^{-1/p}\int_{R}|V(x)^{-1/p}v|\mathdm(x).
        \end{equation*}
        Thus, the last part of~\eqref{FSquaredLower} has the lower bound
        \begin{align*}
            &[V]_{A_p,\bfD}^{-2/p}\sum_{j=1}^{d}\sum_{\substack{R\in\cB_k\\\e\in\cE}}\left(\frac{1}{|R|}\int_{R}|V(x)^{-1/p}\Phi_{R}^{\e}\cU_{R}e_{j}|\mathdm(x)\right)^2\\
            &\gtrsim_{p,d}[V]_{A_p,\bfD}^{-4/p}\sum_{j=1}^{d}\sum_{\substack{R\in\cB_k\\\e\in\cE}}\left(\frac{1}{|R|}\int_{R}|V(x)^{-1/p}\Phi_{R}^{\e}\cU_{R}e_{j}|^{p'}\mathdm(x)\right)^{2/p'}\\
            &\sim_{p,d}[V]_{A_p,\bfD}^{-4/p}\sum_{j=1}^{d}\sum_{\substack{R\in\cB_k\\\e\in\cE}}|\cV'_{R}\Phi_{R}^{\e}\cU_{R}e_{j}|^2\sim_{d}[V]_{A_p,\bfD}^{-4/p}\sum_{\substack{R\in\cB_k\\\e\in\cE}}|\cV'_{R}\Phi_{R}^{\e}\cU_{R}|^2,
        \end{align*}
        concluding the proof.
    \end{proof}

    Before we proceed to the second half of Theorem~\ref{thm:H1BMOduality},
    recall that the \emph{dyadic biparameter Christ--Goldberg maximal operator} corresponding to a weight $W$ on $\reals^{n+m}$ (and exponent $p$) is defined as
        \begin{equation*}
            M_{\bfD,W}f(x) \coloneq \sup_{R\in\bfD}\langle|W(x)^{1/p}f|\rangle_{R}\1_{R}(x),\qquad x\in\reals^{n+m},~f\in \mathrm{L}^1_{\mathrm{loc}}(\reals^{n+m};\complex^d),
        \end{equation*}
    and the \emph{dyadic biparameter Christ--Goldberg auxiliary maximal operator} as
        \begin{equation*}
             \widetilde{M}_{\bfD,W}f\coloneq \sup_{R\in\bfD}\langle|\cW_{R}f|\rangle_{R}\1_{R},\qquad x\in\reals^{n+m},~f\in \mathrm{L}^1_{\mathrm{loc}}(\reals^{n+m};\complex^d).
        \end{equation*}
    Because of~\cite[Theorem~1.3]{Vuorinen2024} we have that the operator $M_{\bfD,W}$ is bounded when acting on $\Lp{p}(W) \rightarrow \Lp{p}(\reals^{n+m})$
    (note that the target space is unweighted) provided that $[W]_{A_p,\bfD}<\infty,$ specifically one has the bound
    \begin{equation}
        \label{strong dyadic Christ-Goldberg}
        \Vert M_{\bfD,W}\Vert_{\Lp{p}(W)\rightarrow \Lp{p}(\reals^{n+m})}\lesssim_{n,m,p,d}[W]_{A_p,\bfD}^{2/(p-1)},
    \end{equation}
    for $1<p<\infty.$
    From \cite[Proposition 4.1]{DKPS2024} we also have
        \begin{equation*}
            \Vert \widetilde{M}_{\bfD,U}\Vert_{\Lp{p}(U)\rightarrow \Lp{p}(\reals^{n+m})}\lesssim_{n,m,p,d}[W]_{A_p,\bfD}^{(p+1)/(p(p-1))}.
        \end{equation*}

    \begin{proposition}
        \label{prop:functionaltoBMO}
        Let $\ell$ be any bounded linear functional on $\mathrm{H}^1_{\bfD}(U,V,p).$ Then, there exists a unique $B\in\emph{BMO}_{\emph{prod},\bfD}(U,V,p)$ with
        \begin{equation}
            \label{eq:FunctionalExpression}
            \ell(\Phi)=\sum_{\substack{R\in\bfD\\\e\in\cE}}\mathrm{tr}((B_{R}^{\e})^{\ast}\Phi_{R}^{\e}),\quad\forall\Phi\in \mathrm{H}^1_{\bfD}(U,V,p).
        \end{equation}
        Moreover, there holds
        \begin{equation*}
            \Vert B\Vert_{\emph{BMO}_{\emph{prod},\bfD}(U,V,p)}\lesssim_{n,m,p,d}[V]_{A_{p},\bfD}^{2+1/p}\Vert\ell\Vert.
        \end{equation*}
    \end{proposition}

    \begin{proof}
        We partially adapt the second half of the proof of \cite[Theorem 1.3]{Isralowitz_2017}.
        The main difference is that we need to introduce an auxiliary function
        $N_\Omega$ and the properties of scalar weights of the form $|V^{1/p}A|^{p}$
        with positive definite matrices $A$ in order to get the necessary bounds. There is also an extra twist given in \eqref{eq:BMOp} below.
        
        We denote by $\cH$ the set of all $\mathrm{M}_{d}(\complex)$-valued $\Lp{2}$ functions on $\reals^{n+m}$ with finitely many nonzero biparameter cancellative Haar coefficients, and we identify such functions with finitely supported sequences $\Phi=\{\Phi_{R}^{\e}\}_{\substack{R\in\bfD\e\in\cE}}$ in $\mathrm{M}_{d}(\complex)$ in the obvious way.
        In addition, given a matrix $A$ we denote here its $(i,j)$ entry by $A(i,j).$
        Then, for all $\Phi\in\cH,$ we have
        \begin{equation*}
            \Phi=\sum_{\substack{R\in\bfD\\\e\in\cE}}h_{R}^{\e}\Phi_{R}^{\e}
        \end{equation*}
        and therefore
        \begin{equation}
            \label{PhiBPairing}
            \ell(\Phi)=\sum_{\substack{R\in\bfD\\\e\in\cE}}\ell(h_{R}^{\e}\Phi_{R}^{\e})=\sum_{\substack{R\in\bfD\\\e\in\cE}}\sum_{i,j=1}^{d}\ell(h_{R}^{\e}\Phi_{R}^{\e}(i,j)E_{ij})
            =\sum_{\substack{R\in\bfD\\\e\in\cE}}\sum_{i,j=1}^{d}h_{R}^{\e}\Phi_{R}^{\e}(i,j)\ell(E_{ij})
        \end{equation}
        where $E_{ij}$ is the $d\times d$-matrix with 1 at the $(i,j)$-entry and 0 at all other entries.
        Thus, if we take $B=\{B_{R}^{\e}\}_{\substack{R\in\bfD\\\e\in\cE}}$ given by
        \begin{equation*}
            B_{R}^{\e}(i,j)\coloneq \overline{\ell(h_{R}^{\e}E_{ji})},\qquad\forall i,j=1,\ldots,d,~\forall R\in\bfD,~\forall\e\in\cE,
        \end{equation*}
        the last expression of~\eqref{PhiBPairing} is exactly~\eqref{eq:FunctionalExpression}. 
        By Proposition \ref{prop:BMOtofunctional} and since $\cH$ is dense in $\mathrm{H}^1_{\bfD}(U,V,p),$ it suffices only to prove that sequence $B$ is in $\BMOprodD(U,V,p)$ with
        \begin{equation}
            \label{BMOestimate}
            \Vert B\Vert_{\BMOprodD(U,V,p)}\lesssim_{n,m,p,d}[V]_{A_{p},\bfD}^{2}\Vert\ell\Vert.
        \end{equation}

        Note that by the scalar, unweighted BMO equivalences we have
        \begin{align}
            \label{eq:BMOp}
            \nonumber&\Vert B\Vert_{\BMOprodD(U,V,p)}\\
            &\sim_{n,m,p,d}\sup_{\Omega}\frac{1}{|\Omega|^{1/p'}}\bigg\Vert\bigg(\sum_{\substack{R\in\bfD(\Omega)\\\e\in\cE}}|\cV_{R}B_{R}^{\e}\cU_{R}^{-1}|^2\frac{\1_{R}}{|R|}\bigg)^{1/2}\bigg\Vert_{\Lp{p'}(\reals^{n+m})}=:\cC,
        \end{align}
        so it suffices only to prove that $\cC\lesssim_{n,m,p,d}\Vert\ell\Vert.$ By the Monotone Convergence Theorem, we can without loss of generality assume that $B$ has only finitely many nonzero terms.

        Let us denote by $\langle\var,\var\rangle$ the Hermitian product on $\mathrm{M}_{d}(\complex)$ given by
        \begin{equation*}
            \langle A,B\rangle\coloneq \tr(B^{\ast}A).
        \end{equation*}
        Then, it is elementary to see that
        \begin{align}
            \label{Riesz representation}
            \nonumber&\Vert F\Vert_{\Lp{p}(\reals^{n+m};\mathrm{M}_{d}(\complex))}\\
            \nonumber&\sim_{n,m,p,d}\left\{\left|\int_{\reals^{n+m}}\langle F(x),G(x)\rangle\mathdm(x)\right|:~ G\in\cH,~\Vert G\Vert_{\Lp{p'}(\reals^{n+m};\mathrm{M}_{d}(\complex))}=1\right\}\\
            &=\bigg\{\bigg|\sum_{\substack{R\in\bfD\\\e\in\cE}}\tr((G^{\e}_{R})^{\ast}F_{R}^{\e})\bigg|:~ G\in\cH,~\Vert G\Vert_{\Lp{p'}(\reals^{n+m};\mathrm{M}_{d}(\complex))}=1\bigg\},
        \end{align}
        for all $F\in\cH.$ We recall also the usual (unweighted) dyadic Littlewood--Paley estimates
        \begin{align}
            \label{Littlewood-Paley}
            \nonumber&\Vert F\Vert_{\Lp{p}(\reals^{n+m};\mathrm{M}_{d}(\complex))}\\
            &\sim_{n,m,p,d}\bigg(\int_{\reals^{n+m}}\bigg(\sum_{\substack{R\in\bfD\\\e\in\cE}}|F_{R}^{\e}|^2\frac{\1_{R}(x)}{|R|}\bigg)^{p/2}dx\bigg)^{1/p},\qquad\forall F\in\cH.
        \end{align}
        Fix now any Lebesgue-measurable subset $\Omega$ of $\reals^{n+m}$ with finite nonzero measure. Then, we have
        \begin{align*}
            &\bigg(\int_{\Omega}\bigg(\sum_{\substack{R\in\bfD(\Omega)\\\e\in\cE}}|\cV_{R}B_{R}^{\e}\cU_{R}^{-1}|^2\frac{\1_{R}(x)}{|R|}\bigg)^{p'/2}\mathdm(x)\bigg)^{1/p'}\\
            &\overset{\eqref{Littlewood-Paley}}{\sim}_{n,m,p,d}\bigg\Vert\sum_{\substack{R\in\bfD(\Omega)\\\e\in\cE}}h_{R}^{\e}\cV_{R}B_{R}^{\e}\cU_{R}^{-1}\bigg\Vert_{\Lp{p'}(\reals^{n+m};\mathrm{M}_{d}(\complex))}\\
            &\overset{\eqref{Riesz representation}}{\sim}_{n,m,p,d}\sup_{A\in\cH\setminus\{0\}}
            \frac{1}{\Vert A\Vert_{\Lp{p}(\reals^{n+m};\mathrm{M}_{d}(\complex))}}\bigg|\sum_{\substack{R\in\bfD(\Omega)\\\e\in\cE}}\tr\left((A_{R}^{\e})^{\ast}\cV_{R}B_{R}^{\e}\cU_{R}^{-1}\right)\bigg|\\
            &=\sup_{A\in\cH\setminus\{0\}}
            \frac{1}{\Vert A\Vert_{\Lp{p}(\reals^{n+m};\mathrm{M}_{d}(\complex))}}\bigg|\sum_{\substack{R\in\bfD(\Omega)\\\e\in\cE}}\tr\left((B_{R}^{\e})^{\ast}\cV_{R}A_{R}^{\e}\cU_{R}^{-1}\right)\bigg|\\
            &=\sup_{A\in\cH\setminus\{0\}}\frac{1}{\Vert A\Vert_{\Lp{p}(\reals^2;\mathrm{M}_{d}(\complex))}}|(\hat{A},B)|=\sup_{A\in\cH\setminus\{0\}}\frac{1}{\Vert A\Vert_{\Lp{p}(\reals^2;\mathrm{M}_{d}(\complex))}}|\ell(\hat{A})|\\
            &\leq\Vert\ell\Vert\sup_{A\in\cH\setminus\{0\}}\frac{1}{\Vert A\Vert_{\Lp{p}(\reals^2;\mathrm{M}_{d}(\complex))}}\Vert\hat{A}\Vert_{\mathrm{H}^1_{\bfD}(U,V,p)},
        \end{align*}
        where
        \begin{equation*}
            \hat{A}^{\e}_{R}\coloneq 
            \begin{cases}
                \cV_{R}A_{R}^{\e}\cU_{R}^{-1},\text{ if } R\in\bfD(\Omega)\\\\
                0,\text{ otherwise}
            \end{cases}
            ,\quad R\in\bfD,~\e\in\cE.
        \end{equation*}
        It suffices now to prove that
        \begin{equation*}
            \Vert\hat{A}\Vert_{\mathrm{H}^1_{\bfD}(U,V,p)}\lesssim_{n,m,p,d}[V]_{A_{p},\bfD}^{2}|\Omega|^{1/p'}\Vert A\Vert_{\Lp{p}(\reals^2;\mathrm{M}_{d}(\complex))},
        \end{equation*}
        for all $A\in\cH.$ We set
        \begin{equation*}
            N_{\Omega}(x)\coloneq \sup_{R\in\bfD(\Omega)}|V(x)^{-1/p}\cV_{R}|\1_{R}(x),\quad x\in\reals^{n+m}.
        \end{equation*}
        We define now functions $\widetilde{N}_\Omega^{(k)}$ by applying
        the dyadic Christ--Goldberg biparameter maximal operator $M_{\bfD,V'}$ with respect to the biparameter $(d\times d)$ matrix valued $\bfD$-dyadic $A_{p'}$ weight $V'$ on the function $V^{1/p}\1_{\Omega}e_k$ for each $k=1,\ldots,d$ (where we recall that $\{e_1,\ldots,e_d\}$ is the standard basis of $\complex^d$).
        That is, we define
        \begin{equation*}
            \widetilde{N}_\Omega^{(k)}(x) \coloneq \sup_{R\in\bfD} \langle|V(x)^{-1/p}V^{1/p}\1_\Omega e_k|\rangle_R \1_R(x).
        \end{equation*}
        If we consider $\widetilde{N}_\Omega \coloneq \sup_{k=1,\ldots,d} \widetilde{N}_\Omega^{(k)},$ this satisfies
        \begin{equation*}
            \widetilde{N}_\Omega(x) \gtrsim_{n,m,d,p} [V]_{A_p,\bfD}^{-1/p} N_\Omega(x).
        \end{equation*}
        Indeed, just observe that
        \begin{align*}
            N_{\Omega}(x)&=\sup_{R\in\bfD(\Omega)}|V(x)^{-1/p}\cV_{R}|\1_{R}(x)=\sup_{R}|\cV_{R}V(x)^{-1/p}|\1_{R}(x)\\
            &\sim_{d}\sup_{R\in\bfD(\Omega)}\langle|V^{1/p}V(x)^{-1/p}|^{p}\rangle^{1/p}\1_{R}(x)\\
            &\lesssim_{n,m,d,p}[V]_{A_p,\bfD}^{1/p} \sup_{R\in\bfD(\Omega)} \langle|V^{1/p}V(x)^{-1/p}|\rangle_R \1_R(x)\\
            &=[V]_{A_p,\bfD}^{1/p} \sup_{R\in\bfD(\Omega)} \langle|V(x)^{-1/p}V^{1/p}|\rangle_R \1_R(x),
        \end{align*}
        because the scalar weight $|V^{1/p}M|^{p}$ is uniformly in the biparameter $\bfD$-dyadic Muckenhoupt $A_p$ class for every positive definite matrix $M$ (see for example \cite[Lemma 3.4]{DKPS2024}).
        Next, using the comparability between the matrix norm and the supremum of norms of matrix columns, we get that the previous expression is bounded above by
        \begin{equation*}
            [V]_{A_p,\bfD}^{1/p}
            \sup_{R\in\bfD(\Omega)} \sup_{k=1,\ldots,d} \langle|V(x)^{-1/p}V^{1/p} e_k|\rangle_R \1_R(x)
            \leq [V]_{A_p,\bfD}^{1/p}\widetilde{N}_{\Omega}(x).
        \end{equation*}
        Finally, using the boundedness \eqref{strong dyadic Christ-Goldberg} of the dyadic Christ--Goldberg biparameter maximal operator,
        we obtain
        \begin{equation*}
            \Vert N_{\Omega}\Vert_{\Lp{p'}(\reals^2)}\lesssim_{n,m,p,d}[V]_{A_p,\bfD}^{1/p}[V']_{A_{p'},\bfD}^{2/(p'-1)}|\Omega|^{1/p'}\sim_{p,d}[V]_{A_{p},\bfD}^{2+1/p}|\Omega|^{1/p'}.
        \end{equation*}
        Thus, for all $A\in\cH$ we have
        \begin{align*}
            \Vert\hat{A}\Vert_{\mathrm{H}^1_{\bfD}(U,V,p)}&=\int_{\Omega}\bigg(\sum_{\substack{R\in\bfD(\Omega)\\\e\in\cE}}|V(x)^{-1/p}\cV_{R}A_{R}^{\e}\cU_{R}^{-1}\cU_{R}|^2\frac{\1_{R}(x)}{|R|}\bigg)^{1/2}\mathdm(x)\\
            &=\int_{\Omega}\bigg(\sum_{\substack{R\in\bfD(\Omega)\\\e\in\cE}}|V(x)^{-1/p}\cV_{R}A_{R}^{\e}|^2\frac{\1_{R}(x)}{|R|}\bigg)^{1/2}\mathdm(x)\\
            &\leq\int_{\Omega}N_{\Omega}(x)\bigg(\sum_{\substack{R\in\bfD(\Omega)\\\e\in\cE}}|A_{R}^{\e}|^2\frac{\1_{R}(x)}{|R|}\bigg)^{1/2}\mathdm(x)\\
            &\lesssim_{n,m,p,d}\Vert N_{\Omega}\Vert_{\Lp{p'}(\reals^2)}\Vert A\Vert_{\Lp{p}(\reals^{n+m};\mathrm{M}_{d}(\complex))}\\
            &\lesssim_{n,m,p,d}[V]_{A_{p},\bfD}^{2+1/p}|\Omega|^{1/p'}\Vert A\Vert_{\Lp{p}(\reals^{n+m};\mathrm{M}_{d}(\complex))},
        \end{align*}
        concluding the proof of \eqref{BMOestimate}.

        Uniqueness of $B$ follows immediately by testing $\ell$ on sequences in $\cH.$
    \end{proof}

    \begin{remark}
    The proofs of Propositions \ref{prop:BMOtofunctional}, \ref{prop:functionaltoBMO} work also in the one-parameter setting.
    \end{remark}



    \subsection{Two-matrix weighted bounds for paraproducts}
    For a given locally integrable function $B:\reals^{n+m}\to \mathrm{M}_{d}(\complex),$ we define its $\Vert B\Vert_{\BMOprodD(U,V,p)}$ norm
    as the norm of the sequence $\{B_{R}^{\e}\}$ of its biparameter Haar coefficients. Following the terminology of Holmes--Petermichl--Wick \cite[Subsection 6.1]{Holmes_Petermichl_Wick_2018} we define the following so-called ``pure'' biparameter paraproducts acting on (suitable) $\complex^d$-valued functions $f$ on $\reals^{n+m}$:
    \begin{equation*}
        \Pi_{\bfD,B}^{(11)}f\coloneq \sum_{\substack{R\in\bfD\\\e\in\cE}}h^{\e}_{R}B_{R}^{\e}\langle f\rangle_{R},\quad \Pi_{\bfD,B}^{(00)}f\coloneq \sum_{\substack{R\in\bfD\\\e\in\cE}}\frac{\1_{R}}{|R|}B^{\e}_{R}f^{\e}_{R},
    \end{equation*}
    \begin{equation*}
        \Gamma_{\bfD,B}f\coloneq \sum_{\substack{R\in\bfD\\\e,\delta\in\cE\\\e_i\neq\delta_i,~i=1,2}}\frac{1}{\sqrt{|R|}}h_{R}^{1\oplus\e\oplus\delta}B_{R}^{\e}f_{R}^{\delta}.
    \end{equation*}
    Here we denote
    \begin{equation*}
        1\oplus1=0\oplus0=0,\quad 1\oplus0=0\oplus1=1
    \end{equation*}
    and extend these operations in the obvious entrywise fashion to $\cE^1,$ $\cE^2$ and $\cE.$ Moreover, abusing notation we denote $(1,\ldots,1)$ (where the number of entries is always clear from the context) by just $1.$ Notice that $1\oplus\e_i\oplus\delta_i\neq 1,$ for all $\e,\delta\in\cE$ with $\e_i\neq\delta_i,$ $i=1,2.$

    Clearly $(\Pi_{\bfD,B}^{(00)})^{\ast}=\Pi_{\bfD,B^{\ast}}^{(11)}$ in the unweighted $\Lp{2}(\reals^{n+m};\complex^d)$ sense. Observe also that a change of summation variables yields
    \begin{equation*}
        \Gamma_{\bfD,B}f=\sum_{\substack{R\in\bfD\\\e,\delta\in\cE\\\e_i\neq\delta_i,~i=1,2}}\frac{1}{\sqrt{|R|}}h_{R}^{\e}B_{R}^{1\oplus\e\oplus\delta}f_{R}^{\delta},
    \end{equation*}
    therefore $(\Gamma_{\bfD,B})^{\ast}=\Gamma_{\bfD,B^{\ast}}$ in the unweighted $\Lp{2}(\reals^{n+m};\complex^d)$ sense.

    \begin{proposition}
        \label{prop:pureparaproducts}
        Let $d,p,U,V$ and $B$ be as above.
        \begin{enumerate}
            \item[(a)] There holds
            \begin{equation*}
                \Vert\Pi_{\bfD,B}^{(11)}\Vert_{\Lp{p}(U)\rightarrow \Lp{p}(V)}\sim\Vert\Pi_{\bfD,B}^{(00)}f\Vert_{\Lp{p}(U)\rightarrow \Lp{p}(V)}\sim\Vert B\Vert_{\emph{BMO}_{\emph{prod},\bfD}(U,V,p)},
            \end{equation*}
    where all implied constants depend only on $n,m,d,p,[U]_{A_p,\bfD}$ and $[V]_{A_p,\bfD}.$

            \item[(b)] There holds
            \begin{equation*}
                \Vert\Gamma_{\bfD,B}\Vert_{\Lp{p}(U)\rightarrow \Lp{p}(V)}\lesssim\Vert B\Vert_{\emph{BMO}_{\emph{prod},\bfD}(U,V,p)},
            \end{equation*}
        where all implied constants depend only on $n,m,d,p,[U]_{A_p,\bfD}$ and $[V]_{A_p,\bfD}.$
        \end{enumerate}
    \end{proposition}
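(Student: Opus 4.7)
The plan is to reduce both parts to the $\mathrm{H}^1$--BMO duality of Theorem~\ref{thm:H1BMOduality}, coupled with the $\Lp{p}(U)\to \Lp{p}$ bound for the modified Christ--Goldberg maximal operator $\widetilde{M}_U$ from Subsection~\ref{subsec:MatrixWeightedFeffermanStein}, together with the matrix-weighted dyadic Littlewood--Paley square function equivalence for $\complex^d$-valued functions available in the matrix $A_{p'}$ theory.

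\emph{Upper bounds in part (a).} For $\Pi_{\bfD,B}^{(11)}$ I dualize against $g\in\Lp{p'}(V')$ and compute
\[
\langle\Pi_{\bfD,B}^{(11)}f,g\rangle = \sum_{\substack{R\in\bfD\\\e\in\cE}}\langle B_R^\e\langle f\rangle_R,g_R^\e\rangle = \overline{\ell_B(\Phi)},
\]
with $\Phi_R^\e \coloneq g_R^\e(\langle f\rangle_R)^*$. Theorem~\ref{thm:H1BMOduality} reduces the required upper bound to
\[
\|\Phi\|_{\mathrm{H}^1_{\bfD}(U,V,p)}\lesssim \|f\|_{\Lp{p}(U)}\|g\|_{\Lp{p'}(V')}.
\]
Using the rank-one identity $|V^{-1/p}\Phi_R^\e\cU_R| = |V^{-1/p}g_R^\e|\cdot|\cU_R\langle f\rangle_R|$, the pointwise bound $|\cU_R\langle f\rangle_R|\leq \widetilde{M}_U f(x)$ valid for $x\in R$, Cauchy--Schwarz over $(R,\e)$, and H\"older's inequality, the last display factors as a product of $\|\widetilde{M}_U f\|_{\Lp{p}}$ and the $\Lp{p'}$ norm of the matrix-weighted dyadic square function of $g$ with weight $V'$ (using $V^{-1/p}=(V')^{1/p'}$). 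Those two factors are then controlled by $\|f\|_{\Lp{p}(U)}$ and $\|g\|_{\Lp{p'}(V')}$ respectively, via the $\Lp{p}(U)\to\Lp{p}$ bound for $\widetilde{M}_U$ and the matrix-weighted Littlewood--Paley inequality for the $A_{p'}$ weight $V'$. The bound for $\Pi_{\bfD,B}^{(00)}$ is then derived from the adjoint identity $(\Pi_{\bfD,B}^{(00)})^* = \Pi_{\bfD,B^*}^{(11)}$ in the unweighted $\Lp{2}$ pairing, the duality $(\Lp{p}(U))^*\simeq \Lp{p'}(U')$, and the symmetry $\|B\|_{\BMOprodD(U,V,p)}\sim_{p,d}\|B^*\|_{\BMOprodD(V',U',p')}$ noted at the start of this section.

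\emph{Lower bound in part (a): the expected main obstacle.} Here one wants $\|B\|_{\BMOprodD(U,V,p)}\lesssim \|\Pi_{\bfD,B}^{(11)}\|_{\Lp{p}(U)\to \Lp{p}(V)}$. By Theorem~\ref{thm:H1BMOduality} it suffices to produce the bound $\|\ell_B\|_{(\mathrm{H}^1_{\bfD}(U,V,p))^*}\lesssim \|\Pi_{\bfD,B}^{(11)}\|_{\Lp{p}(U)\to \Lp{p}(V)}$. The difficulty is that a generic $\Phi\in \mathrm{H}^1_{\bfD}(U,V,p)$ is not of the rank-one outer-product form $g_R^\e(\langle f\rangle_R)^*$ used for the upper bound, and hence cannot be directly realized as a bilinear pairing of $\Pi_{\bfD,B}^{(11)}$. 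I plan to circumvent this by decomposing each matrix coefficient in columns, $\Phi_R^\e = \sum_{i=1}^d(\Phi_R^\e e_i)e_i^*$, so that $\ell_B(\Phi)$ splits into $d$ scalar-valued pieces each of which can be compared against a bilinear pairing of $\Pi_{\bfD,B}^{(11)}$ with suitable vector valued test functions built from Haar-expansion identities. Controlling the $\Lp{p}(U)$ and $\Lp{p'}(V')$ norms of these test functions by $\|\Phi\|_{\mathrm{H}^1_{\bfD}(U,V,p)}$ is the technical step, and I expect it to require the vector-valued matrix-weighted Fefferman--Stein inequality of Theorem~\ref{thm:FeffermanSteinReducingWeightedMaximal}.

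\emph{Part (b).} The argument for $\Gamma_{\bfD,B}$ is a variant of the upper bound in (a). Computing $\langle\Gamma_{\bfD,B}f,g\rangle$ and identifying the pairing as $\overline{\ell_B(\Phi)}$ with
\[
\Phi_R^\e \coloneq \frac{1}{\sqrt{|R|}}\sum_{\substack{\delta\in\cE\\\delta_i\neq \e_i,~i=1,2}}g_R^{1\oplus\e\oplus\delta}(f_R^\delta)^*,
\]
Theorem~\ref{thm:H1BMOduality} reduces the claim to $\|\Phi\|_{\mathrm{H}^1_{\bfD}(U,V,p)}\lesssim \|f\|_{\Lp{p}(U)}\|g\|_{\Lp{p'}(V')}$. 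Each $\Phi_R^\e$ is a finite sum of rank-one outer products; Cauchy--Schwarz in $\delta$ together with the rank-one identity $|uv^*|=|u||v|$ yields the pointwise estimate
\[
|V^{-1/p}\Phi_R^\e\cU_R|\lesssim |R|^{-1/2}\Bigl(\sum_{\delta}|V^{-1/p}g_R^{1\oplus\e\oplus\delta}|^2\Bigr)^{1/2}\Bigl(\sum_{\delta}|\cU_R f_R^\delta|^2\Bigr)^{1/2},
\]
which after re-indexing reduces the $\mathrm{H}^1$-norm estimate to a product of two matrix-weighted dyadic square function norms, controlled by H\"older's inequality and the matrix-weighted Littlewood--Paley inequality applied to the weights $V'$ and $U$ respectively.
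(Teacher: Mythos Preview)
Your upper bound for $\Pi_{\bfD,B}^{(11)}$ in part (a) and your argument for part (b) are correct and essentially match the paper's route, which also goes through the $\mathrm{H}^1$--BMO pairing of Theorem~\ref{thm:H1BMOduality} for part (b). For the upper bound in (a) the paper takes a slightly different path: instead of dualizing and invoking Theorem~\ref{thm:H1BMOduality}, it applies the matrix-weighted Littlewood--Paley estimate \cite[Lemma~5.3]{DKPS2024} directly to $\Pi_{\bfD,B}^{(11)}f$, inserts $\cU_R^{-1}\cU_R$, and reduces to the \emph{scalar} unweighted paraproduct with symbol $b_R^\e=|\cV_R B_R^\e\cU_R^{-1}|$ acting on $\widetilde{M}_{\bfD,U}f$. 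Both arguments are fine; yours has the advantage of being uniform with part (b), the paper's avoids invoking the duality theorem.

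The genuine gap is in your lower bound for part (a). Your plan is to bound $\|\ell_B\|_{(\mathrm{H}^1_{\bfD}(U,V,p))^*}$ by the operator norm of $\Pi_{\bfD,B}^{(11)}$ via a column decomposition $\Phi_R^\e=\sum_i(\Phi_R^\e e_i)e_i^*$. But each resulting piece has the form $\sum_{R,\e}\langle\Phi_R^\e e_i,B_R^\e e_i\rangle$, and to realize this as $\langle\Pi_{\bfD,B}^{(11)}f,g\rangle=\sum_{R,\e}\langle B_R^\e\langle f\rangle_R,g_R^\e\rangle$ you would need $\langle f\rangle_R=e_i$ simultaneously for all $R$ in the support of $\Phi$, which forces $f$ to be a constant vector and hence not in $\Lp{p}(U)$. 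There is no evident way to build admissible test functions here, and invoking Theorem~\ref{thm:FeffermanSteinReducingWeightedMaximal} does not address this structural obstruction. The paper's approach is far more direct: it tests $\Pi_{\bfD,B}^{(11)}$ on $f=\1_\Omega U^{-1/p}e$ for a fixed measurable $\Omega$ and $e\in\complex^d$, uses the lower Littlewood--Paley bound from \cite[Lemma~5.3]{DKPS2024}, and then applies part~(2) of Lemma~\ref{l: replace inverse by prime} to convert $\langle U^{-1/p}\rangle_R$ into $\cU_R^{-1}$. Combined with the John--Nirenberg equivalence between the $\Lp{2}$ and $\Lp{p'}$ forms of the product BMO norm, this recovers $\|B\|_{\BMOprodD(U,V,p)}$ after summing over a basis of $\complex^d$. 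You should replace your lower-bound plan with this testing argument.
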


    \begin{proof}
        Throughout the proof $\lesssim,\gtrsim,\sim$ mean that all implied inequality constants depend only on $n,m,p,d,[U]_{A_p,\bfD}$ and $[V]_{A_p,\bfD},$ unless otherwise specified.

        \item[(a)] We adapt part of the proof of \cite[Theorem 2.2]{Isralowitz_2017}. Note that by the John--Nirenberg inequalities for (unweighted) scalar dyadic product BMO we have
        \begin{align*}
            &\Vert B\Vert_{\BMOprodD(U,V,p)}\\
            &\sim_{p,d,n,m}\sup_{\Omega}\frac{1}{|\Omega|^{1/p}}\bigg\Vert\bigg(\sum_{\substack{R\in\bfD(\Omega)\\\e\in\cE}}|\cV_{R}B_{R}^{\e}\cU_{R}^{-1}|^2\frac{\1_{R}}{|R|}\bigg)^{1/2}\bigg\Vert_{\Lp{p}(\reals^{n+m})}\coloneq \cC,
        \end{align*}
        where the supremum is taken over all Lebesgue-measurable subsets $\Omega$ of $\reals^2$ of nonzero finite measure. Therefore, it suffices to prove that
        \begin{equation*}
            \Vert\Pi_{\bfD,B}^{(11)}\Vert_{\Lp{p}(U)\rightarrow \Lp{p}(\reals^{n+m};\complex^d)}\gtrsim\cC
        \end{equation*}
        and
        \begin{equation*}
            \Vert\Pi_{\bfD,B}^{(11)}\Vert_{\Lp{p}(U)\rightarrow \Lp{p}(\reals^{n+m};\complex^d)}\lesssim\Vert B\Vert_{\BMOprodD(U,V,p)}.
        \end{equation*}

        Let us first see that $\cC\lesssim\Vert\Pi_{\bfD,B}^{(11)}\Vert_{\Lp{p}(U)\rightarrow \Lp{p}(\reals^{n+m};\complex^d)}.$ Let $\Omega$ be any Lebesgue-measurable subset of $\reals^2$ of nonzero finite measure. Let also $e\in\complex^d\setminus\{0\}$ be arbitrary. We test $\Pi_{\bfD,B}^{(11)}$ on the function $f\coloneq \1_{\Omega}U^{-1/p}e.$ Using~\cite[Lemma~5.3]{DKPS2024} we obtain
        \begin{align*}
            \Vert\Pi_{\bfD,B}^{(11)}f\Vert_{\Lp{p}(\reals^{n+m};\complex^d)}
            &\gtrsim\bigg(\int_{\reals^{n+m}}\bigg(\sum_{\substack{R\in\bfD\\\e\in\cE}}|\cV_{R}B_{R}^{\e}\langle f\rangle_{R}|^2\frac{\1_{R}(x)}{|R|}\bigg)^{p/2}\mathdm(x)\bigg)^{1/p}\\
            &\geq\bigg(\int_{\reals^{n+m}}\bigg(\sum_{\substack{R\in\bfD(\Omega)\\\e\in\cE}}|\cV_{R}B_{R}^{\e}\langle U^{-1/p}\rangle_{R}e|^2\frac{\1_{R}(x)}{|R|}\bigg)^{p/2}\mathdm(x)\bigg)^{1/p}.
        \end{align*}
        Note also that $\Vert f\Vert_{\Lp{p}(U)}=|\Omega|^{1/p}\cdot|e|.$ Therefore, the previous quantity is bounded above by
        $\Vert\Pi_{\bfD,B}^{(11)}\Vert_{\Lp{p}(U)\rightarrow \Lp{p}(\reals^{n+m};\complex^d)} |\Omega|^{1/p} |e|$ for all $e\in\complex^d.$
        Using repeatedly the elementary estimates (3.1) and (3.3) in \cite{DKPS2024} we conclude
        \begin{align*}
            &\frac{1}{|\Omega|^{1/p}}\bigg(\int_{\reals^{n+m}}\bigg(\sum_{\substack{R\in\bfD(\Omega)\\\e\in\cE}}|\cV_{R}B_{R}^{\e}\langle U^{-1/p}\rangle_{R}|^2\frac{\1_{R}(x)}{|R|}\bigg)^{p/2}\mathdm(x)\bigg)^{1/p}\\
            &\lesssim\Vert\Pi_{\bfD,B}^{(11)}\Vert_{\Lp{p}(U)\rightarrow \Lp{p}(\reals^{n+m};\complex^d)}.
        \end{align*}
        To continue the computation, we need the following fact proved in \cite[Lemma 2.2]{Isralowitz_Kwon_Pott2017}: If $\cW_{E}$ is the reducing operator of a matrix weight $W$ over a set $E$ with respect to exponent $p,$ and $W^{-1/(p-1)}$ is also integrable over $E$ with reducing exponent $\cW'_{E}$ with respect to exponent $p'$, then
        \begin{equation}
            \label{replace inverse by prime_2}
            |{\langle W^{1/p}\rangle}_{E} e|\leq
            |\cW_{E} e|\lesssim_{p,d}|\cW_{E}\cW_{E}'|^{d}{\langle W^{1/p}\rangle}_{E} e|,\quad\forall e\in\complex^{d}.
        \end{equation}
        Using \eqref{replace inverse by prime_2} we have
        \begin{equation*}
            |P\langle U^{-1/p}\rangle_{R}|=|\langle U^{-1/p}\rangle_{R}P^{\ast}|\sim|\cU'_{R}P^{*}|\sim|\cU^{-1}_{R}P^{*}|=|P\cU^{-1}_{R}|,\quad\forall P\in \mathrm{M}_{d}(\complex),
        \end{equation*}
        yielding the desired result.

        We now prove that $\Vert\Pi_{\bfD,B}^{(11)}\Vert_{\Lp{p}(U)\rightarrow \Lp{p}(\reals^{n+m};\complex^d)}\lesssim\Vert B\Vert_{\BMOprodD(U,V,p)}.$ Using \cite[Lemma~5.3]{DKPS2024} we have
        \begin{align*}
            \Vert\Pi_{\bfD,B}^{(11)}f\Vert_{\Lp{p}(\reals^{n+m};\complex^d)}&\sim\bigg(\int_{\reals^{n+m}}\bigg(\sum_{\substack{R\in\bfD\\\e\in\cE}}|\cV_{R}B_{R}^{\e}\langle f\rangle_{R}|^2\frac{\1_{R}(x)}{|R|}\bigg)^{p/2}\mathdm(x)\bigg)^{1/p}\\
            &\leq\bigg(\int_{\reals^{n+m}}\bigg(\sum_{\substack{R\in\bfD\\\e\in\cE}}|\cV_{R}B_{R}^{\e}\cU_{R}^{-1}|^2|\cU_{R}\langle f\rangle_{R}|^2\frac{\1_{R}(x)}{|R|}\bigg)^{p/2}\mathdm(x)\bigg)^{1/p}\\
            &\leq\bigg(\int_{\reals^{n+m}}\bigg(\sum_{R\in\bfD}|\cV_{R}B_{R}^{\e}\cU_{R}^{-1}|^2(\langle\widetilde{M}_{\bfD,U}f\rangle_{R})^2\frac{\1_{R}(x)}{|R|}\bigg)^{p/2}\mathdm(x)\bigg)^{1/p}\\
            &\sim_{p,n,m}\Vert\Pi_{\bfD,b}^{(11)}(\widetilde{M}_{\bfD,U}f)\Vert_{\Lp{p}(\reals^{n+m})},
        \end{align*}
        where $b=(b_{R}^{\e})_{\substack{R\in\bfD(\Omega)\\\e\in\cE}}$ is the sequence given by
        \begin{equation*}
            b_{R}^{\e}\coloneq |\cV_{R}B_{R}^{\e}\cU_{R}^{-1}|,\quad R\in\bfD,~\e\in\cE.
        \end{equation*}
        By the well-known unweighted bounds for paraproducts in the scalar setting (see e.~g.~\cite{Blasco_Pott_2005}) we have
        \begin{equation*}
            \Vert\Pi_{\bfD,b}^{(11)}\Vert_{\Lp{p}(\reals^{n+m})\rightarrow \Lp{p}(\reals^{n+m})}\lesssim_{p,n,m}\Vert b\Vert_{\BMOprodD}.
        \end{equation*}
        By definition, $\Vert b\Vert_{\BMOprodD}=\Vert B\Vert_{\BMOprodD(U,V,p)}.$
        Putting these facts together and the bound for the dyadic auxiliary biparameter maximal operator, we get
        \begin{equation*}
            \Vert\Pi_{\bfD,B}^{(11)}f\Vert_{\Lp{p}(\reals^{n+m};\complex^d)} \lesssim \Vert B\Vert_{\BMOprodD(U,V,p)}\Vert f\Vert_{\Lp{p}(U)}.
        \end{equation*}

        Finally, by duality we obtain
        \begin{equation*}
            \Vert \Pi_{\bfD,B}^{(00)}\Vert_{\Lp{p}(U)\rightarrow \Lp{p}(V)}=\Vert\Pi_{\bfD,B^{\ast}}^{(11)}\Vert_{\Lp{p'}(V')\rightarrow \Lp{p'}(U')}\sim_{p,d}\Vert B\Vert_{\BMOprodD(U,V,p)}.
        \end{equation*}

        \item[(b)] We adapt the factorization trick from the proof of \cite[Proposition 6.1]{Holmes_Petermichl_Wick_2018}. We have
        \begin{align*}
            (\Gamma_{\bfD,B}f,g)=\sum_{\substack{R\in\bfD\\\e,\delta\in\cE\\\e_i\neq\delta_i,~i=1,2}}\langle B_{R}^{1\oplus\e\oplus\delta}f_{R}^{\e},g_{R}^{\delta}\rangle.
        \end{align*}
        Observe that if $A\in \mathrm{M}_{d}(\complex)$ and $x,y\in\complex^d,$ then
        \begin{align*}
            \langle Ax,y\rangle&=x^{T}A^{T}\bar{y}=\tr(x^{T}A^{T}\bar{y})=\overline{\tr(A^{\ast}y\bar{x}^{T})}.
        \end{align*}
        It follows that
        \begin{align*}
            |(\Gamma_{\bfD,B}f,g)|&
            =\bigg|\sum_{\substack{R\in\bfD\\\e,\delta\in\cE\\\e_i\neq\delta_i,~i=1,2}}\frac{1}{\sqrt{|R|}}\text{tr}((B_{R}^{\e})^{\ast}g_{R}^{1\oplus\e\oplus\delta}\overline{f_{R}^{\delta}}^{T})\bigg|\\
            &\lesssim\Vert B\Vert_{\BMOprodD(U,V,p)}\Vert\Phi\Vert_{\mathrm{H}^1_{\bfD}(U,V,p)},
        \end{align*}
        where
        \begin{equation*}
            \Phi_{R}^{\e}\coloneq \frac{1}{\sqrt{|R|}}\sum_{\substack{\delta\in\cE\\\delta_i\neq\e_i,~i=1,2}}
            g_{R}^{1\oplus\e\oplus\delta}\overline{f_{R}^{\delta}}^{T},\quad R\in\bfD,~\e\in\cE.
        \end{equation*}
        It suffices now to prove that
        \begin{equation*}
            \Vert\Phi\Vert_{\mathrm{H}^1_{\bfD}(U,V,p)}\lesssim\Vert f\Vert_{\Lp{p}(U)}\Vert g\Vert_{\Lp{p'}(V')}.
        \end{equation*}
        We have that $\Vert\Phi\Vert_{\mathrm{H}^1_{\bfD}(U,V,p)}$ is bounded by
        \begin{align*}
            & 2^{n+m}\sum_{\substack{R\in\bfD\\\e,\delta\in\cE\\\e_i\neq\delta_i,~i=1,2}}|V(x)^{-1/p} g_{R}^{1\oplus\e\oplus\delta}\overline{f_{R}^{\delta}}^{T}\cU_{R}|^2\frac{\1_{R}(x)}{|R|^2}\\
            &\leq 2^{n+m}\sum_{\substack{R\in\bfD\\\e,\delta\in\cE\\\e_i\neq\delta_i,~i=1,2}}|V(x)^{-1/p} g_{R}^{1\oplus\e\oplus\delta}|^2\cdot|\overline{f_{R}^{\delta}}^{T}\cU_{R}|^2\frac{\1_{R}(x)}{|R|^2}\\
            &=2^{n+m}\sum_{\substack{R\in\bfD\\\e,\delta\in\cE\\\e_i\neq\delta_i,~i=1,2}}|V(x)^{-1/p} g_{R}^{\e}|^2\frac{\1_{R}(x)}{|R|}\cdot|\cU_{R}f_{R}^{\delta}|^2\frac{\1_{R}(x)}{|R|}\\
            &\leq2^{n+m}S_{\bfD,V'}g(x)\widetilde{S}_{\bfD,U}f(x),
        \end{align*}
        for all $x\in\reals^{n+m},$ where
        \begin{equation*}
            S_{\bfD,V'}g\coloneq \bigg(\sum_{\substack{R\in\bfD\\\e\in\cE}}|V(x)^{-1/p}g_{R}^{\e}|^2\frac{\1_{R}}{|R|}\bigg)^{1/2},\quad\widetilde{S}_{\bfD,U}f\coloneq \bigg(\sum_{\substack{R\in\bfD\\\delta\in\cE}}|\cU_{R}f_{R}^{\delta}|^2\frac{\1_{R}}{|R|}\bigg)^{1/2}.
        \end{equation*}
        Therefore
        \begin{equation*}
            \Vert\Phi\Vert_{\mathrm{H}^1_{\bfD}(U,V,p)}\leq 2^{n+m}\Vert S_{\bfD,V'}g\Vert_{\Lp{p'}(\reals^{n+m})}\Vert\widetilde{S}_{\bfD,U}f\Vert_{\Lp{p}(\reals^{n+m})}.
        \end{equation*}
        It is proved in \cite[Lemma 5.2 and Corollary 5.4]{DKPS2024} that
        \begin{equation*}
            \Vert\widetilde{S}_{\bfD,U}f\Vert_{\Lp{p}(\reals^{n+m})}\lesssim\Vert f\Vert_{\Lp{p}(U)},\qquad \Vert S_{\bfD,V'}g\Vert_{\Lp{p'}(\reals^{n+m})}\lesssim\Vert g\Vert_{\Lp{p'}(V')},
        \end{equation*}
        yielding the desired bound.
    \end{proof}

    Two-matrix weighted bounds for ``mixed'' paraproducts can be easily deduced from two-matrix weighted bounds for the mixed type operators considered in \cite[Section 8]{DKPS2024}.
    Although at the time of \cite{DKPS2024} only a rather incomplete treatment of the latter bounds was possible, they all follow now readily from \Cref{thm:FeffermanSteinPointwiseWeightedMaximal} and \Cref{thm:FeffermanSteinReducingWeightedMaximal}.
    To show how to apply our results, we give only one example below, the proof for the other operators being similar.
    Note that the parts of the proof that do not rely on the matrix weighted extension of the Fefferman--Stein vector valued inequalities were already carried out in \cite[Section 8]{DKPS2024}.
    Nevertheless, for the reader's convenience we include full details.
    
    As our example, we focus on the mixed paraproduct
    \begin{equation*}
        \Gamma_{\bfD,B}^{(10)}f\coloneq \sum_{\substack{R\in\bfD\\\e\in\cE,~\delta_2\in\cE^2\\\delta_2\neq\e_2}}\frac{1}{\sqrt{|R_2|}}h_{R}^{\e_1,1\oplus\e_2\oplus\delta_2}B_{R}^{\e}\langle f^{\delta_2,2}_{R_2}\rangle_{R_1},
    \end{equation*}
    acting on suitable $\complex^d$-valued functions $f$ on $\reals^{n+m}.$
    The notation and definitions for the other possible paraproducts can be found in the work of Holmes--Petermichl--Wick \cite[Subsection 6.1]{Holmes_Petermichl_Wick_2018},
    together with their duality relations.
    
    \begin{lemma}
        \label{lemma:auxiliary_mixed_operator}
        Let $1<p<\infty$ and let $W$ be a $(d\times d)$ matrix $\bfD$-dyadic biparameter $A_p$ weight on $\reals^n\times\reals^m.$ For (suitable) functions $f:\reals^{n+m}\to\complex,$ let
        \begin{equation*}
            [\widetilde{M}\widetilde{S}]_{\bfD,U}f(x)\coloneq \bigg(\sum_{\substack{R_2\in\cD^2\\\e_2\in\cE^2}}(\sup_{R_1\in\cD^1}|\cW_{R}\langle f^{\e_2,2}_{R_2}\rangle_{R_1}|\1_{R_1}(x_1))^2\frac{\1_{R_2}(x_2)}{|R_2|}\bigg)^{1/2},
        \end{equation*}
        for all $x=(x_1,x_2)\in\reals^{n}\times\reals^{m}.$ Then, we have
        \begin{equation*}
            \Vert[\widetilde{M}\widetilde{S}]_{\bfD,W}f\Vert_{\Lp{p}(\reals^{n+m})}\lesssim_{n,m,p,d}[W]_{A_p,\bfD}^{\beta}\Vert f\Vert_{\Lp{p}(W)},
        \end{equation*}
        where
        \begin{equation*}
            \beta=
            \begin{cases}
                1+\frac{2}{p}+\frac{1}{p-1},\text{ if }p\leq 2\\\\
                \frac{1}{2}+\frac{1}{p}+\frac{2}{p-1},\text{ if }p>2
            \end{cases}
            .
        \end{equation*}
    \end{lemma}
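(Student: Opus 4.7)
The plan is to reduce the biparameter operator to a one-parameter weighted maximal function in the first variable, combine the classical scalar Fefferman--Stein vector valued inequality with the matrix-weighted bounds of \Cref{thm:FeffermanSteinReducingWeightedMaximal}, and finally reassemble the second variable via the Muckenhoupt $A_p$ property of the scalar weights $|W^{1/p} e_j|^p$ together with Littlewood--Paley. The central difficulty is that the natural one-parameter reduction produces a matrix weight $W_{R_2}$ on $\reals^n$ that depends on the outer summation index $R_2 \in \cD^2$; this dependence must be \emph{decoupled} before any Fefferman--Stein argument can be applied uniformly over $R_2$.

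First I would use the column-norm equivalence~\eqref{equivalence_matrix_norm_columns} together with the iteration of reducing operators~\eqref{iterate reducing operators} to obtain the pointwise bound
\begin{equation*}
    [\widetilde{M}\widetilde{S}]_{\bfD, W} f(x) \lesssim_{p, d} \sum_{j=1}^{d} \bigg( \sum_{\substack{R_2 \in \cD^2 \\ \e_2 \in \cE^2}} |\widetilde{M}_{\cD^1, W_{R_2}}(f^{\e_2, 2}_{R_2} e_j)(x_1)|^2 \frac{\1_{R_2}(x_2)}{|R_2|} \bigg)^{1/2},
\end{equation*}
where $W_{R_2}(x_1) \coloneq \cW_{x_1, R_2}^p$ is the one-parameter matrix $\bfD$-dyadic $A_p$ weight on $\reals^n$ with $[W_{R_2}]_{A_p, \cD^1} \lesssim_{p, d} [W]_{A_p, \bfD}$ uniformly in $R_2$ by~\eqref{uniform domination of characteristics of averages}. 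To decouple the $R_2$-dependence I would invoke the pointwise bound $\widetilde{M}_{\cD^1, W_{R_2}} g(z_1) \lesssim_{n, d, p} [W_{R_2}]_{A_p, \cD^1}^{1/p} M_{\cD^1}(M_{W_{R_2}, \cD^1} g)(z_1)$ used in the proof of \Cref{thm:FeffermanSteinReducingWeightedMaximal}. Since $f^{\e_2, 2}_{R_2}$ is scalar, $M_{W_{R_2}, \cD^1}(f^{\e_2, 2}_{R_2} e_j)(z_1)$ factors cleanly as $|\cW_{z_1, R_2} e_j| \cdot M_{\cD^1}(|f^{\e_2, 2}_{R_2}|)(z_1)$, so the outer $M_{\cD^1}$ is the unweighted dyadic Hardy--Littlewood maximal function, which is now uniform in $R_2$.

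At this point the classical scalar Fefferman--Stein inequality for $M_{\cD^1}$ with $q = 2$ applies in the first variable at fixed $x_2$, reducing matters (modulo a factor of $[W]_{A_p, \bfD}^{1/p}$) to bounding in $L^p(\reals^{n+m})$ the expression
\begin{equation*}
    \bigg( \sum_{R_2, \e_2} |\cW_{x_1, R_2} e_j|^2 M_{\cD^1}(|f^{\e_2, 2}_{R_2}|)(x_1)^2 \frac{\1_{R_2}(x_2)}{|R_2|} \bigg)^{1/2}.
\end{equation*}
For the remaining $R_2$-dependence through $|\cW_{x_1, R_2} e_j|$, I would use the comparability $|\cW_{x_1, R_2} e_j|^p \sim_{p, d} \langle |W(x_1, \cdot)^{1/p} e_j|^p \rangle_{R_2}$ together with the fact that $w_j \coloneq |W^{1/p} e_j|^p$ is a scalar biparameter $\bfD$-dyadic $A_p$ weight with $[w_j]_{A_p, \bfD} \lesssim_{p, d} [W]_{A_p, \bfD}$, so that via the scalar $A_p$ reverse-Jensen estimate $\langle w_j \rangle_{R_2}^{1/p} \lesssim [w_j]_{A_p, \bfD}^{1/p} \langle w_j^{1/p} \rangle_{R_2}$, I dominate $|\cW_{x_1, R_2} e_j|$ by $[W]_{A_p, \bfD}^{1/p}$ times a strong dyadic maximal function of $|W^{1/p} e_j|$ at $(x_1, x_2)$. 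A final application of the Vuorinen strong dyadic Christ--Goldberg estimate~\eqref{strong dyadic Christ-Goldberg} in the second variable, combined with Littlewood--Paley in $x_2$ to repackage the Haar coefficients $\{f^{\e_2, 2}_{R_2}\}$ back into $f$, should recover the target $L^p(W)$ norm.

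The main obstacle will be the careful bookkeeping of the powers of $[W]_{A_p, \bfD}$ accumulating through each step, and in particular the case split $p \leq 2$ versus $p > 2$ in $\beta$. The latter should emerge from the exponent $1/p + \max\{1/(q-1), 1/(p-1)\}$ in \Cref{thm:FeffermanSteinReducingWeightedMaximal} applied with $q = 2$, which equals $1/p + 1/(p-1)$ when $p \leq 2$ and $1/p + 1$ when $p > 2$; this is then compounded by the $1/p$ factor from the scalar $A_p$ reverse-Jensen step and the $2/(p-1)$ factor from the Vuorinen bound~\eqref{strong dyadic Christ-Goldberg}, reproducing $\beta = 1 + 2/p + 1/(p-1)$ when $p \leq 2$ and $\beta = 1/2 + 1/p + 2/(p-1)$ when $p > 2$.
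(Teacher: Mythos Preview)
Your decoupling strategy hinges on the factorization $M_{W_{R_2},\cD^1}(f^{\e_2,2}_{R_2}e_j)(z_1)=|\cW_{z_1,R_2}e_j|\cdot M_{\cD^1}(|f^{\e_2,2}_{R_2}|)(z_1)$, which is valid only when $f^{\e_2,2}_{R_2}$ is scalar. The statement's ``$f:\reals^{n+m}\to\complex$'' is a typo: both the paper's own proof (which terminates at $\int|W(x)^{1/p}f(x)|^p\mathdm(x)$ and applies $\widetilde{M}_{W_{x_2},\cD^1}$ to $f^{\e_2,2}_{R_2}$) and the sole application in \Cref{prop:mixedparaproducts} require $f$ to be $\complex^d$-valued. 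For vector-valued $f$ your factorization fails, and with it your mechanism for stripping the $R_2$-dependence off the weight $W_{R_2}$. Even granting scalar $f$, the final step is not actually carried out: once the weight factor $M_{\cD^2}(|W(x_1,\cdot)^{1/p}e_j|)(x_2)$ has been completely separated from the unweighted square function in $f$, there is no evident way to recombine them into $\Vert f\Vert_{\Lp{p}(W)}$, and the Vuorinen bound~\eqref{strong dyadic Christ-Goldberg} does not apply to this product structure. Your exponent arithmetic also does not sum to $\beta$ in either regime.

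The paper achieves the decoupling by iterating reducing operators in the \emph{opposite} order. One writes $|\cW_{R_1\times R_2}v|\sim_{p,d}|\cW_{R_1,R_2}v|$ where $\cW_{R_1,R_2}$ is the reducing operator over $R_2$ of $W_{R_1}(x_2)\coloneq\cW_{x_2,R_1}^p$, applies part~(1) of \Cref{l: replace inverse by prime} to bound this by $[W]_{A_p,\bfD}^{1/p}\langle|W_{R_1}^{1/p}v|\rangle_{R_2}$, pulls the supremum over $R_1$ inside the $R_2$-average, and then removes that average via the auxiliary \Cref{lemma:technical_observation}. The crucial observation is that $W_{R_1}^{1/p}(x_2)=\cW_{x_2,R_1}$, so the resulting inner quantity is exactly $\widetilde{M}_{W_{x_2},\cD^1}(f^{\e_2,2}_{R_2})(x_1)$ with $W_{x_2}=W(\cdot,x_2)$ the \emph{point-slice} weight, which depends on the integration variable $x_2$ but \emph{not} on the summation index $R_2$. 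This is what lets \Cref{thm:FeffermanSteinReducingWeightedMaximal} (with $q=2$) apply directly at fixed $x_2$ to vector-valued $f$, keeping weight and function coupled; the remaining $x_2$-integral is then a one-parameter matrix-weighted square function handled by the bounds of \cite{Isralowitz_2020}.
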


    To prove Lemma~\ref{lemma:auxiliary_mixed_operator}, we need a technical observation already present implicitly in the proof of \cite[Lemma 5.3]{DKPS2024}.

    \begin{lemma}
        \label{lemma:technical_observation}
        Let $1<p<\infty$ and let $\{f_{P,\e}\}_{\substack{P\in\cD^2\\\e\in\cE^2}}$ be a family of nonnegative measurable functions on $\reals^m.$ Then, it holds
        \begin{equation*}
            \bigg\Vert\bigg(\sum_{\substack{P\in\cD^2\\\e\in\cE^2}}\langle f_{P,\e}\rangle_{P}^2\frac{\1_{P}}{|P|}\bigg)^{1/2}\bigg\Vert_{\Lp{p}(\reals^{m})}
            \lesssim_{m,p}\bigg\Vert\bigg(\sum_{\substack{P\in\cD^2\\\e\in\cE^2}} |f_{P,\e}|^2\frac{\1_{P}}{|P|}\bigg)^{1/2}\bigg\Vert_{\Lp{p}(\reals^{m})}.
        \end{equation*}
    \end{lemma}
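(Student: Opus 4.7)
The plan is to reduce the inequality to the classical (unweighted, scalar) Fefferman--Stein vector valued inequality \eqref{classical Fefferman Stein} for the Hardy--Littlewood maximal function on $\reals^m$, applied with exponent $q=2$. The correct scaling will be produced by an auxiliary sequence chosen so that the factor $\1_{P}/|P|$ appearing on both sides of the lemma is absorbed as part of a square of a maximal function applied to a suitably normalized function.

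Concretely, for each pair $(P,\e)$ with $P\in\cD^2$ and $\e\in\cE^2$, I would introduce the $\Lp{2}$-normalized truncation
\begin{equation*}
g_{P,\e}(x_2)\coloneq \frac{\1_{P}(x_2)}{\sqrt{|P|}}\,f_{P,\e}(x_2),\qquad x_2\in\reals^m.
\end{equation*}
For every $x_2\in P$, the uncentered Hardy--Littlewood maximal function $M$ on $\reals^m$ satisfies
\begin{equation*}
Mg_{P,\e}(x_2)\geq \frac{1}{|P|}\int_{P}g_{P,\e}(y_2)\mathdm(y_2)=\frac{\langle f_{P,\e}\rangle_{P}}{\sqrt{|P|}},
\end{equation*}
which rearranges to the pointwise estimate
\begin{equation*}
\langle f_{P,\e}\rangle_{P}^{2}\,\frac{\1_{P}(x_2)}{|P|}\leq (Mg_{P,\e}(x_2))^{2},
\end{equation*}
valid for every $x_2\in\reals^m$ (trivially so when $x_2\notin P$).

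Summing this bound over $P\in\cD^2$ and $\e\in\cE^2$, taking square roots, and applying the $\Lp{p}(\reals^m)$ norm dominates the left-hand side of the lemma by
\begin{equation*}
\bigg\Vert\bigg(\sum_{\substack{P\in\cD^2\\\e\in\cE^2}}(Mg_{P,\e})^{2}\bigg)^{1/2}\bigg\Vert_{\Lp{p}(\reals^m)}.
\end{equation*}
An application of \eqref{classical Fefferman Stein} on $\reals^m$ with $q=2$, applied to the countable sequence $\{g_{P,\e}\}_{P\in\cD^2,\,\e\in\cE^2}$, then bounds the latter by a constant $C(m,p)$ times
\begin{equation*}
\bigg\Vert\bigg(\sum_{\substack{P\in\cD^2\\\e\in\cE^2}}|g_{P,\e}|^{2}\bigg)^{1/2}\bigg\Vert_{\Lp{p}(\reals^m)},
\end{equation*}
and since $|g_{P,\e}(x_2)|^{2}=|f_{P,\e}(x_2)|^{2}\,\1_{P}(x_2)/|P|$ this expression is exactly the right-hand side of the lemma.

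No substantive obstacle is anticipated. The only subtlety is the choice of normalization for $g_{P,\e}$: the factor $1/\sqrt{|P|}$ is precisely what is needed so that squaring $Mg_{P,\e}$ produces the $1/|P|$ factor on the left-hand side, while simultaneously $g_{P,\e}^{2}$ produces the $1/|P|$ factor on the right-hand side, so that a single application of the classical vector valued Fefferman--Stein inequality closes the estimate.
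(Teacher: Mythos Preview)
Your proof is correct and takes a genuinely different route from the paper's. The paper argues by duality: it invokes the unweighted dyadic Littlewood--Paley estimates to identify the left-hand side with $\Vert F\Vert_{\Lp{p}(\reals^m)}$ for $F=\sum_{P,\e}\langle f_{P,\e}\rangle_{P}h_{P}^{\e}$, pairs $F$ against an arbitrary $g\in L^{p'}(\reals^m)$, rewrites $\langle f_{P,\e}\rangle_{P}\,|g_{P}^{\e}|$ as an integral against $f_{P,\e}(x)\,\1_{P}(x)/|P|$, and then applies Cauchy--Schwarz in the $(P,\e)$ sum followed by H\"older in $x$, bounding the resulting square function of $g$ by $\Vert g\Vert_{L^{p'}}$ via Littlewood--Paley again. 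Your approach instead produces a pointwise domination by $(Mg_{P,\e})^2$ through the clever $|P|^{-1/2}$ normalization and then invokes the classical Fefferman--Stein inequality \eqref{classical Fefferman Stein} with $q=2$ once. Your argument is shorter and uses a single black-box inequality rather than two Littlewood--Paley equivalences plus a duality step; the paper's argument, on the other hand, stays entirely within the Haar/square-function framework and never touches the maximal function, which fits more naturally with the surrounding square-function estimates in that section.
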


    \begin{proof}
        We use duality. As usual, by the monotone convergence theorem we may consider only finitely many of the functions $\{f_{P,\e}\}_{\substack{P\in\cD^2\\\e\in\cE^2}}$ to be nonzero. Then, by the dyadic (unweighted) Littlewood--Payley estimates we have
        \begin{equation*}
            \bigg\Vert\bigg(\sum_{\substack{P\in\cD^2\\\e\in\cE^2}}\langle f_{P,\e}\rangle_{P}^2\frac{\1_{P}}{|P|}\bigg)^{1/2}\bigg\Vert_{\Lp{p}(\reals^{m})}\sim_{m,p}\Vert F\Vert_{\Lp{p}(\reals^m)},
        \end{equation*}
        where
        \begin{equation*}
            F\coloneq \sum_{\substack{P\in\cD^2\\\e\in\cE^2}}\langle f_{P,\e}\rangle_{P}h_{P}^{\e}.
        \end{equation*}
        Let $g\in \Lp{p'}(\reals^m)$ be arbitrary. Then, we have
        \begin{align*}
            &\int_{\reals^m}|F(x)g(x)|\mathdm(x)\leq\sum_{\substack{P\in\cD^2\\\e\in\cE^2}}\langle f_{P,\e}\rangle\cdot|g_{P}^{\e}|\\
            &=\sum_{\substack{P\in\cD^2\\\e\in\cE^2}}\int_{\reals^m}f_{P,\e}(x)h_{P}^{\e}(x)g_{P}^{\e}h_{P}^{\e}(x)\mathdm(x)\\
            &\leq\bigg\Vert\bigg(\sum_{\substack{P\in\cD^2\\\e\in\cE^2}}|f_{P,\e}|^2\frac{\1_{P}}{|P|}\bigg)^{1/2}\bigg\Vert_{\Lp{p}(\reals^m)}\cdot
            \bigg\Vert\bigg(\sum_{\substack{P\in\cD^2\\\e\in\cE^2}}|g_{P}^{\e}|^2\frac{\1_{P}}{|P|}\bigg)^{1/2}\bigg\Vert_{\Lp{p'}(\reals^m)}\\
            &\sim_{m,p}\bigg\Vert\bigg(\sum_{\substack{P\in\cD^2\\\e\in\cE^2}}|f_{P,\e}|^2\frac{\1_{P}}{|P|}\bigg)^{1/2}\bigg\Vert_{\Lp{p}(\reals^m)}\cdot\Vert g\Vert_{\Lp{p'}(\reals^m)}.
        \end{align*}
        An appeal to the Riesz representation theorem concludes the proof.
    \end{proof}

    We now prove Lemma~\ref{lemma:auxiliary_mixed_operator}.

    \begin{proof}[Proof of Lemma~\ref{lemma:auxiliary_mixed_operator}]
        First of all, we have
        \begin{align*}
            &\Vert [\widetilde{M}\widetilde{S}]_{\bfD,W}f\Vert_{\Lp{p}(\reals^{n+m})}^{p}=\int_{\reals^n}A(x_1)\mathdm(x_1),
        \end{align*}
        where
        \begin{equation*}
            A(x_1)\coloneq \int_{\reals^m}\bigg(\sum_{\substack{R_2\in\cD^2\\\e_2\in\cE^2}}(\sup_{R_1\in\cD^2}|\cW_{R_1\times R_2}(f_{R_2}^{\e_2,2})_{R_1}|\1_{R_1}(x_1))^2\frac{\1_{R_2}(x_2)}{|R_2|}\bigg)^{p/2}\mathdm(x_2),
        \end{equation*}
        for all $x_1\in\reals^{m}.$
        
        Fix $x_1\in\reals^{m}.$ For a.e.~$x_2\in\reals^n,$ we denote by $\cW_{x_2,R_1}$ the reducing operator of the weight $W_{x_2}(y)\coloneq W(y,x_2),$ $y\in\reals^n$ over any $R_1\in\cD^1$ with respect to the exponent $p.$ For fixed $R_1\in\cD^1,$ we define $W_{R_1}(x_2)\coloneq \cW_{x_2,R_1}^{p},$ for a.e.~$x_2\in\reals,$ and denote by $\cW_{R_1,R_2}$ the reducing operator of $W_{R_1}$ over any $R_2\in\cD^2$ with respect to the exponent $p.$ Let us observe here in general that if $\cV_{E}$ is the reducing operator of a matrix weight $V$ over a set $E$ with respect to exponent $p,$ and $V^{-1/(p-1)}$ is also integrable over $E$ with reducing exponent $\cV'_{E}$ with respect to exponent $p'$, then
        \begin{equation}
            \label{replace inverse by prime_1}
            |\cV_{E} e|\lesssim_{p,d}|\cV_{E}\cV_{E}'|\cdot{\langle|V^{1/p}e|\rangle}_{E},\quad\forall e\in\complex^{d};
        \end{equation}
        for the proof of this fact see, for instance, \cite[Lemmas~3.1 and~3.2]{DKPS2024}. Applying now first \eqref{iterate reducing operators}, then \eqref{replace inverse by prime_1}, and finally \eqref{uniform domination of characteristics of averages}, we obtain
        \begin{align*}
            &\sup_{R_1\in\cD^1}|\cW_{R_1\times R_2}\langle f_{R_2}^{\e_2,2}\rangle_{R_1}|\1_{R_1}(x_1)\lesssim_{p,d}\sup_{R_1\in\cD^1}|\cW_{R_1,R_2}\langle f_{R_2}^{\e_2,2}\rangle_{R_1}|\1_{R_1}(x_1)\\
            &\lesssim_{p,d}
            \sup_{R_1\in\cD^1}[W_{R_1}]_{A_p,\cD^2}^{\frac{1}{p}}\left\langle|W_{R_1}^{1/p}\langle f_{R_2}^{\e_2,2}\rangle_{R_1}|\right\rangle_{R_2}\1_{R_1}(x_1)\\
            &\lesssim_{p,d}[W]_{A_p,\bfD}^{\frac{1}{p}}\bigg\langle\sup_{R_1\in\cD^1}|W_{R_1}^{1/p}\langle f_{R_2}^{\e_2,2}\rangle_{R_1}|\1_{R_1}(x_1)\bigg\rangle_{R_2}
        \end{align*}
        for all $\e_2\in\cE^2$ and $R_2\in\cD^2.$
        So we get
        \begin{align*}
            A(x_1)\lesssim_{p,d}[W]_{A_p,\bfD}B(x_1),
        \end{align*}
        where
        \begin{align*}
            B(x_1)\coloneq 
            \int_{\reals^m}\bigg(\sum_{\substack{R_2\in\cD^2\\\e_2\in\cE^2}}\bigg\langle\sup_{R_1\in\cD^1}|W_{R_1}^{1/p}\langle f_{R_2}^{\e_2,2}\rangle_{R_1}|\1_{R_1}(x_1)\bigg\rangle_{R_2}^2\frac{\1_{R_2}(x_2)}{|R_2|}\bigg)^{p/2}\mathdm(x_2).
        \end{align*}
        Since $x_1$ is fixed, by Lemma~\ref{lemma:technical_observation} we obtain
        \begin{align*}
            B(x_1)&\leq 
            \int_{\reals^m}\bigg(\sum_{\substack{R_2\in\cD^2\\\e_2\in\cE^2}}\bigg(\sup_{R_1\in\cD^1}|W_{R_1}^{1/p}\langle f_{R_2}^{\e_2,2}\rangle_{R_1}|\1_{R_1}(x_1)\bigg)^2\frac{\1_{R_2}(x_2)}{|R_2|}\bigg)^{p/2}\mathdm(x_2)\\
            &=\int_{\reals^m}\bigg(\sum_{\substack{R_2\in\cD^2\\\e_2\in\cE^2}}\bigg(\widetilde{M}_{W_{x_2},\cD^1}(f_{R_2}^{\e_2,2})(x_1)\bigg)^2\frac{\1_{R_2}(x_2)}{|R_2|}\bigg)^{p/2}\mathdm(x_2).
        \end{align*}
        Thus, by Fubini--Tonelli we have
        \begin{align*}
            &\int_{\reals^n}B(x_1)\mathdm(x_1)\\
            &\leq\int_{\reals^m}\bigg(\int_{\reals^n}\bigg(\sum_{\substack{R_2\in\cD^2\\\e_2\in\cE^2}}\bigg(\widetilde{M}_{W_{x_2},\cD^1}(f_{R_2}^{\e_2,2})(x_1)\bigg)^2\frac{\1_{R_2}(x_2)}{|R_2|}\bigg)^{p/2}\mathdm(x_1)\bigg)\mathdm(x_2).
        \end{align*}
        For a.e.~$x_2\in\reals^m,$ using \Cref{thm:FeffermanSteinReducingWeightedMaximal} in the first step and \eqref{uniform slicing variable} in the second step, we obtain
        \begin{align*}
            &\int_{\reals^n}\bigg(\sum_{\substack{R_2\in\cD^2\\\e_2\in\cE^2}}\bigg(\widetilde{M}_{W_{x_2},\cD^1}(f_{R_2}^{\e_2,2})(x_1)\bigg)^2\frac{\1_{R_2}(x_2)}{|R_2|}\bigg)^{p/2}\mathdm(x_1)\\
            &\lesssim_{m,p,d}[W_{x_2}]_{A_p,\cD^1}^{1+\max\{p,p'\}}\int_{\reals^m}\bigg(\sum_{\substack{R_2\in\cD^2\\\e_2\in\cE^2}}|W_{x_2}^{1/p}(x_1)f_{R_2}^{\e_2,2}(x_1)|^2\frac{\1_{R_2}(x_2)}{|R_2|}\bigg)^{p/2}\mathdm(x_1)\\
            &\lesssim_{p,d}[W]_{A_p,\bfD}^{1+\max\{p,p'\}}\int_{\reals^m}\bigg(\sum_{\substack{R_2\in\cD^2\\\e_2\in\cE^2}}|W(x_1,x_2)^{1/p}f_{R_2}^{\e_2,2}(x_1)|^2\frac{\1_{R_2}(x_2)}{|R_2|}\bigg)^{p/2}\mathdm(x_1)\\
            &=[W]_{A_p,\bfD}^{1+\max\{p,p'\}}\int_{\reals^m}\bigg(S_{\cD^2,W_{x_1}}(f(x_1,\var))(x_2)\bigg)^{p/2}\mathdm(x_1),
        \end{align*}
        where we denote $W_{x_1}\coloneq W(x_1,x_2),$ $x_1\in\reals^n.$ Fubini--Tonelli now yields
        \begin{align*}
            &\int_{\reals^n}B(x_1)\mathdm(x_1)\\
            &\lesssim_{m,p,d}[W]_{A_p,\bfD}^{1+\max\{p,p'\}}\int_{\reals^n}\bigg(\int_{\reals^m}\bigg(S_{\cD^2,W_{x_1}}(f(x_1,\var))(x_2)\bigg)^{p/2}\mathdm(x_2)\bigg)\mathdm(x_1).
        \end{align*}
        For a.e.~$x_1\in\reals^n,$ applying first the matrix weighted bounds for the one-parameter dyadic square function from \cite{Isralowitz_2020} and then \eqref{uniform slicing variable}, we get
        \begin{align*}
            &\int_{\reals^m}\bigg(S_{\cD^2,W_{x_1}}(f(x_1,\var))(x_2)\bigg)^{p/2}\mathdm(x_2)\\
            &\lesssim_{n,p,d}[W]_{A_p,\bfD}^{\max\left\{p',\frac{p}{2}+\frac{1}{p-1}\right\}}\int_{\reals^m}|W(x_1,x_2)^{1/p}f(x_1,x_2)|^{p}\mathdm(x_2).
        \end{align*}
        Putting the above estimates together, we finally deduce
        \begin{align*}
             &\Vert [\widetilde{M}\widetilde{S}]_{\bfD,W}f\Vert_{\Lp{p}(\reals^{n+m})}^{p}
             \lesssim_{n,m,p,d}[W]_{A_p,\bfD}^{\alpha}\int_{\reals^{n+m}}|W(x)^{1/p}f(x)|^{p}\mathdm(x),
        \end{align*}
        where
        \begin{equation*}
            \alpha=1+1+\max\{p,p'\}+\max\left\{p',\frac{p}{2}+\frac{1}{p-1}\right\},
        \end{equation*}
        concluding the proof.
    \end{proof}

    We now prove two-matrix weighted bounds for the mixed paraproducts.

    \begin{proposition}
        \label{prop:mixedparaproducts}
        Let $d,p,U,V$ and $B$ be as above. Then there holds
        \begin{equation*}
            \Vert\Gamma_{\bfD,B}^{(10)}\Vert_{\Lp{p}(U)\rightarrow \Lp{p}(V)}\lesssim\Vert B\Vert_{\emph{BMO}_{\emph{prod},\bfD}(U,V,p)},
        \end{equation*}
        where all implied constants depend only on $n,m,d,p,[U]_{A_p,\bfD}$ and $[V]_{A_p,\bfD}.$
    \end{proposition}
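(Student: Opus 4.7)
The plan is to adapt the duality strategy of the proof of Proposition~\ref{prop:pureparaproducts}(b). We treat in detail $\Pi=\Pi_{\bfD,B}^{(10)}$; all other cases are handled by the same template, the only difference being bookkeeping of indices.

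Pair $\Pi f$ against a test function $g\in \Lp{p'}(V')$. Unfolding the biparameter Haar expansions one sees that
\begin{equation*}
(\Pi f,g)
=\sum_{\substack{R\in\bfD\\\e\in\cE}}\left\langle B_R^\e\left\langle f_{R_2}^{\e_2,2}\right\rangle_{R_1},\left\langle g_{R_1}^{\e_1,1}\right\rangle_{R_2}\right\rangle
=\sum_{\substack{R\in\bfD\\\e\in\cE}}\overline{\tr\!\left((B_R^\e)^{\ast}\Phi_R^\e\right)},
\end{equation*}
with the rank-one matrices $\Phi_R^\e\coloneq\left\langle g_{R_1}^{\e_1,1}\right\rangle_{R_2}\left\langle f_{R_2}^{\e_2,2}\right\rangle_{R_1}^{\ast}$. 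Theorem~\ref{thm:H1BMOduality} applied to the functional $\ell_{B}$ and the sequence $\Phi=\{\Phi_R^\e\}$ then yields
\begin{equation*}
|(\Pi f,g)|\lesssim\Vert B\Vert_{\BMOprodD(U,V,p)}\Vert\Phi\Vert_{\mathrm{H}^1_{\bfD}(U,V,p)}.
\end{equation*}
For $\Gamma^{(10)}_{\bfD,B},\Gamma^{(01)}_{\bfD,B}$ and their adjoints the same procedure yields $\Phi_R^\e$ which is a finite sum of rank-one matrices (indexed by the auxiliary signature $\delta_j\neq\e_j$), harmless since $|\cE^j|<\infty$.

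The crucial input is the rank-one identity
\begin{equation*}
|V(x)^{-1/p}\Phi_R^\e\cU_R|=\left|V(x)^{-1/p}\left\langle g_{R_1}^{\e_1,1}\right\rangle_{R_2}\right|\cdot\left|\cU_R\left\langle f_{R_2}^{\e_2,2}\right\rangle_{R_1}\right|.
\end{equation*}
Replacing each factor by its supremum over the opposite coordinate and using that, for each fixed $(R_1,R_2)$, the double sum over $(\e_1,\e_2)$ factors as a product of one-parameter sums, one obtains the pointwise bound
\begin{equation*}
\bigg(\sum_{\substack{R\in\bfD\\\e\in\cE}}|V^{-1/p}\Phi_R^\e\cU_R|^2\frac{\1_R(x)}{|R|}\bigg)^{1/2}\leq F(x)\,G(x),
\end{equation*}
where $F=[\widetilde M\widetilde S]_{\bfD,U}f$ is the operator from Lemma~\ref{lemma:auxiliary_mixed_operator}, and $G$ is its coordinate-swapped analogue applied to $g$ with pointwise weight $V(x)^{-1/p}$. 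Hölder's inequality gives $\Vert\Phi\Vert_{\mathrm{H}^1_{\bfD}(U,V,p)}\leq\Vert F\Vert_{\Lp{p}(\reals^{n+m})}\Vert G\Vert_{\Lp{p'}(\reals^{n+m})}$.

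Lemma~\ref{lemma:auxiliary_mixed_operator} supplies $\Vert F\Vert_{\Lp{p}}\lesssim\Vert f\Vert_{\Lp{p}(U)}$. For $G$, note that $V^{-1/p}=(V')^{1/p'}$ and that $V'$ is a biparameter $\bfD$-dyadic $(d\times d)$ matrix $A_{p'}$ weight with characteristic comparable to a power of $[V]_{A_p,\bfD}$; the desired estimate $\Vert G\Vert_{\Lp{p'}}\lesssim\Vert g\Vert_{\Lp{p'}(V')}$ is then the coordinate-swapped version of Lemma~\ref{lemma:auxiliary_mixed_operator} applied to the weight $V'$ and exponent $p'$, the proof of which is \emph{mutatis mutandis} the same. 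Combining this with the duality $(\Lp{p}(V))^{\ast}=\Lp{p'}(V')$ yields the claimed norm bound for $\Pi_{\bfD,B}^{(10)}$; the remaining five mixed paraproducts follow by the same template. The main technical obstacle is to ensure that the pointwise factor $V(x)^{-1/p}$ appearing inside $G$ can be replaced by the relevant reducing operators $\cV'_R$ at the cost of $[V]_{A_p,\bfD}$-powers, so that the swapped version of Lemma~\ref{lemma:auxiliary_mixed_operator} effectively applies; this is accomplished exactly as in the first half of the proof of that lemma, through the reducing-operator identities of Lemma~\ref{l: replace inverse by prime}(1) together with the slice inequality~\eqref{uniform slicing variable}.
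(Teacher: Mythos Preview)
Your overall strategy coincides with the paper's: pair against $g$, rewrite as $\sum\tr((B_R^\e)^\ast\Phi_R^\e)$, invoke the $\mathrm{H}^1$--BMO duality, and then control $\Vert\Phi\Vert_{\mathrm{H}^1_{\bfD}(U,V,p)}$ by a pointwise product of two mixed square/maximal operators followed by H\"older. The factorization into $F(x)G(x)$ is correct, and $\Vert F\Vert_{\Lp{p}}\lesssim\Vert f\Vert_{\Lp{p}(U)}$ is exactly Lemma~\ref{lemma:auxiliary_mixed_operator}. Note, however, that the paper treats $\Gamma^{(10)}_{\bfD,B}$ rather than $\Pi^{(10)}_{\bfD,B}$; for $\Gamma^{(10)}$ the $g$-factor carries a \emph{full} biparameter Haar coefficient $g_R^\e$, so it is simply the pointwise-weighted square function $S_{\bfD,V'}g$ and no second mixed operator is needed. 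You picked the harder example, where both factors are genuinely mixed.

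The gap is in your final paragraph. You assert that the pointwise factor $V(x)^{-1/p}$ in $G$ can be replaced by the reducing operator $\cV'_R$ via Lemma~\ref{l: replace inverse by prime}(1). That lemma compares reducing operators to \emph{averages} of $|W^{1/p}\,\cdot\,|$, not to pointwise values; there is no inequality $|V(x)^{-1/p}e|\lesssim|\cV'_Re|$ for $x\in R$, so the replacement you describe does not go through. The bound on $G$ is nevertheless true, but by a different mechanism: since $V(x)^{-1/p}$ is constant in the averaging variable, $|V(x)^{-1/p}\langle g_{R_1}^{\e_1,1}\rangle_{R_2}|\le\langle|V(x)^{-1/p}g_{R_1}^{\e_1,1}|\rangle_{R_2}$, which is exactly the Christ--Goldberg maximal function $M_{V'_{x_1},\cD^2}$ (exponent $p'$) applied to $g_{R_1}^{\e_1,1}$ at $x_2$. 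Now run the template of the proof of Lemma~\ref{lemma:auxiliary_mixed_operator}: for fixed $x_1$ apply Theorem~\ref{thm:FeffermanSteinPointwiseWeightedMaximal} to the slice weight $V'_{x_1}$ (uniformly $A_{p'}$ by~\eqref{uniform slicing variable}), then integrate in $x_1$ and finish with the one-parameter matrix-weighted square function bound in the first variable. This gives $\Vert G\Vert_{\Lp{p'}}\lesssim\Vert g\Vert_{\Lp{p'}(V')}$, completing your argument.
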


    \begin{proof}
        Throughout the proof $\lesssim,\gtrsim,\sim$ mean that all implied inequality constants depend only on $n,m,d,[U]_{A_p,\bfD}$ and $[V]_{A_p,\bfD}.$

        We adapt the factorization trick from the proof of \cite[Proposition 6.1]{Holmes_Petermichl_Wick_2018}. We have
        \begin{align*}
            \left|(\Gamma_{\bfD,B}^{(10)}f,g)\right|&=\bigg|\sum_{\substack{R\in\bfD\\\e\in\cE,~\delta_2\in\cE^2\\\delta_2\neq\e_2}}\frac{1}{\sqrt{|R_2|}}\langle B_{R}^{\e}\langle f^{\delta_2,2}_{R_2}\rangle_{R_1},g_{R}^{\e_1,1\oplus\e_2\oplus\delta_2}\rangle\bigg|\\
            &=\bigg|\sum_{\substack{R\in\bfD\\\e\in\cE}}\mathrm{tr}((B_{R}^{\e})^{\ast}\Phi_{R}^{\e})\bigg|\lesssim\Vert B\Vert_{\BMOprodD(U,V,p)}\Vert\Phi\Vert_{\mathrm{H}^1_{\bfD}(U,V,p)},
        \end{align*}
        where
        \begin{equation*}
            \Phi_{R}^{\e}\coloneq \frac{1}{\sqrt{|R_2|}}\sum_{\delta_2\in\cE^2\setminus\{\e_2\}}g^{\e_1,1\oplus\e_2\oplus\delta_2}_{R}\overline{\langle f^{\delta_2,2}_{R_2}\rangle_{R_1}}^{T},\quad R\in\bfD,~\e\in\cE.
        \end{equation*}
        It suffices now to prove that
        \begin{equation}
            \label{goal1}
            \Vert\Phi\Vert_{\mathrm{H}^1_{\bfD}(U,V,p)}\lesssim\Vert f\Vert_{\Lp{p}(U)}\Vert g\Vert_{\Lp{p'}(V')}.
        \end{equation}
        We can bound $\Vert\Phi\Vert_{\mathrm{H}^1_{\bfD}(U,V,p)}$ by
        \begin{align*}
            & 2^{m}\sum_{\substack{R\in\bfD\\\e\in\cE,~\delta_2\in\cE^2\\\delta_2\neq\e_2}}|V(x)^{-1/p}g^{\e_1,1\oplus\e_2\oplus\delta_2}_{R}\overline{\langle f^{\delta_2,2}_{R_2}\rangle_{R_1}}^{T}\cU_{R}|^2\frac{\1_{R}(x)}{|R_1|\cdot|R_2|^2}\\
            &\leq 2^{m}\sum_{\substack{R\in\bfD\\\e\in\cE,~\delta_2\in\cE^2\\\delta_2\neq\e_2}}|V(x)^{-1/p}g^{\e}_{R}|^2\frac{\1_{R}(x)}{|R|}\cdot|\cU_{R}\langle f^{\delta_2,2}_{R_2}\rangle_{R_1}|^2\frac{\1_{R_2}(x_2)}{|R_2|}\\
            &\leq 2^m S_{\bfD,V'}g(x)[\widetilde{M}\widetilde{S}]_{\bfD,U}f(x),
        \end{align*}
        where $S_{\bfD,V'}g$ and $[\widetilde{M}\widetilde{S}]_{\bfD,U}f$ are defined as previously.
        Therefore
        \begin{equation*}
            \Vert\Phi\Vert_{\mathrm{H}^1_{\bfD}(U,V,p)}\leq\Vert S_{\bfD,V'}g\Vert_{\Lp{p'}(\reals^{n+m})}\Vert[\widetilde{M}\widetilde{S}]_{\bfD,U}f\Vert_{\Lp{p}(\reals^{n+m})}.
        \end{equation*}
        It is proved in \cite[Lemmas 5.2]{DKPS2024} that
        \begin{equation*}
           \Vert S_{\bfD,V'}g\Vert_{\Lp{p'}(\reals^{n+m})}\lesssim\Vert g\Vert_{\Lp{p'}(V')}.
        \end{equation*}
        Moreover, Lemma~\ref{lemma:auxiliary_mixed_operator} yields
        \begin{align*}
            \Vert[\widetilde{M}\widetilde{S}]_{\bfD,U}f\Vert_{\Lp{p}(\reals^{n+m})}
            \lesssim\Vert f\Vert_{\Lp{p}(U)},
        \end{align*}
        proving \eqref{goal1}.
    \end{proof}

    \subsection{Two matrix weighted upper bounds for bicommutators}
    \label{subsec:bicommutators}

    Two matrix weighted upper bounds for biparameter paraproducts lead naturally to two matrix weighted upper bounds for bicommutators. Here we overview very briefly the simplest case: that of bicommutators with Haar multipliers on $\reals.$ The argument is a straightforward adaptation of the one in the (unweighted) $\Lp{2}$ scalar valued case from \cite{Blasco_Pott_2005}.
    
    Let $\bfD=\cD^1\times\cD^2$ be a biparameter dyadic grid on $\reals^2,$ and let $\sigma_1=\{\sigma_1(I)\}_{I\in\cD^1}$ and $\sigma_2=\{\sigma_2(J)\}_{J\in\cD^2}$ be (finitely supported) sequences in $\{-1,0,1\}.$ For any function $f\in \Lp{2}(\reals;\complex^d)$ we define
    \begin{equation*}
        T_{\sigma_1}f \coloneq  \sum_{I\in\cD^1}\sigma_1(I)h_{I}f_{I},\quad T_{\sigma_2}f \coloneq  \sum_{J\in\cD^2}\sigma_2(J)h_{J}f_{J}.
    \end{equation*}
    We can then consider the operators $T^1_{\sigma_1}$ and $T^2_{\sigma_2}$ acting on functions $f\in \Lp{2}(\reals^2;\complex^d)$ by
    \begin{equation*}
        T^1_{\sigma_1}f(x_1,x_2) \coloneq  T_{\sigma_1}(f(\var,x_2))(x_1),\quad T^2_{\sigma_2}f(x_1,x_2) \coloneq  T_{\sigma_2}(f(x_1,\var))(x_2),
    \end{equation*}
    for a.e.~$(x_1,x_2)\in\reals^2.$

    Let $B:\reals^{2}\to \mathrm{M}_{d}(\complex)$ be a locally integrable function. For scalar valued locally integrable functions $b$ on $\reals^2$ it is shown in \cite{Blasco_Pott_2005} that
    \begin{equation}
        \label{Blasco_Pott_identity}
        [T^1_{\sigma_1},[T^2_{\sigma_2},b]] = [T^1_{\sigma_1},[T^2_{\sigma_2},\Lambda_{b}]],
    \end{equation}
    where $\Lambda_{b}$ is the so-called \emph{symmetrized paraproduct} given by
    \begin{equation*}
        \Lambda_{b}f \coloneq  \Pi^{(11)}_{b}f+\Pi^{(10)}_{b}f+\Pi^{(01)}_{b}f+\Pi^{(00)}_{b}f.
    \end{equation*}
    Applying \eqref{Blasco_Pott_identity} entrywise we deduce
    \begin{equation*}
        [T^1_{\sigma_1},[T^2_{\sigma_2},B]] = [T^1_{\sigma_1},[T^2_{\sigma_2},\Lambda_{B}]],
    \end{equation*}
    where the symmetrized paraproduct $\Lambda_{B}$ is given by
    \begin{equation*}
        \Lambda_{B}f \coloneq  \Pi^{(11)}_{B}f+\Pi^{(10)}_{B}f+\Pi^{(01)}_{B}f+\Pi^{(00)}_{B}f.
    \end{equation*}
    Let now $1<p<\infty$ and let $U,V$ be biparameter $(d\times d)$ matrix $\bfD$-dyadic $A_p$ weights on $\reals\times\reals.$ In the following estimates, all implied constants depend only on $d,p,[U]_{A_p,\bfD}$ and $[V]_{A_p,\bfD}.$ Using \eqref{uniform slicing variable} and the well-known two matrix weighted bounds in the one parameter setting \cite{Cruz_Uribe_OFS_2018} we deduce
    \begin{equation*}
        \Vert T^{j}_{\sigma_{j}}\Vert_{\Lp{p}(U)\to \Lp{p}(V)}\lesssim1,\quad j=1,2.
    \end{equation*}
    Observe also that Proposition~\ref{prop:pureparaproducts} and Proposition~\ref{prop:mixedparaproducts} immediately yield
    \begin{equation*}
    \Vert\Lambda_{B}\Vert_{\Lp{p}(U)\to \Lp{p}(V)}\lesssim1.
    \end{equation*}
    Thus, we obtain
    \begin{align*}
        &\Vert[T_1,[T_2,B]]\Vert_{\Lp{p}(U)\to \Lp{p}(V)} = \Vert[T_1,[T_2,\Lambda_{B}]]\Vert_{\Lp{p}(U)\to \Lp{p}(V)}\\
        &\leq\Vert T_1T_2\Lambda_{B}\Vert_{\Lp{p}(U)\to \Lp{p}(V)}+\Vert T_1\Lambda_{B}T_2\Vert_{\Lp{p}(U)\to \Lp{p}(V)}\\
        &+\Vert T_2\Lambda_{B}T_1\Vert_{\Lp{p}(U)\to \Lp{p}(V)}+\Vert\Lambda_{B}T_1T_2\Vert_{\Lp{p}(U)\to \Lp{p}(V)}\lesssim1.
    \end{align*}

    \appendix
    \section{Appendix}
    \label{s:appendix}

    Through the article we have used the results of Bownik and Cruz-Uribe~\cite{Cruz_Uribe_Bownik_Extrapolation} in the complex setting.
    The results in that paper are stated in the real setting and
    it is not immediate that they work in the complex setting.
    Some modifications are necessary and some steps require proper justification.
    For instance, given a family of norms measurably parametrized (over $\complex^d$),
    it is not obvious that one can assign them a reducing matrix and a complex John ellipsoid
    in a measurable way.
    This is shown in detail in~\cite[Appendix~A]{DKPS2024}.
    We devote this appendix to briefly discussing the necessary modifications
    and where to find the details to get the results of~\cite{Cruz_Uribe_Bownik_Extrapolation}
    for complex valued matrix weights.

    \subsection{Convex sets and seminorms in the complex setting}
    \label{subsec:ComplexConvexAndSeminorms}
    We begin with the required changes in~\cite[Section~2]{Cruz_Uribe_Bownik_Extrapolation}.
    In both the present work and that of Bownik and Cruz-Uribe,
    one considers symmetric sets.
    The difference is that we substitute real symmetric sets $E \subseteq \reals^d,$
    that is sets $E$ such that $-u \in E$ for every $u \in E,$
    by complex symmetric sets $E \subseteq \complex^d,$
    i.e. sets $E$ for which $\lambda u \in E$ for every $u \in E$ and for any $\lambda\in\complex$ with $|\lambda| = 1.$
    Of course, the definitions for seminorms are also the same,
    only that we consider them to be defined on the vector space $\complex^d$
    and to be complex homogeneous functions, which means that if $\rho$
    is a seminorm on $\complex^d,$ then $\rho(\lambda v) = |\lambda| \rho(v)$
    for any $v \in \complex^d$ and $\lambda \in \complex.$

    Dual seminorms, their properties and their relation to
    the polar of convex (complex) symmetric sets follow the same exposition
    in the vector space $\complex^d$ as in~\cite{Cruz_Uribe_Bownik_Extrapolation}.
    Here one only needs to keep in mind that we substitute the real Euclidean product
    by the complex Hermitian product of $\complex^d.$
    For this reason, in this context the support function $h_K$ of
    a set $K \in \convex(\complex^d)$ has to be defined as
    \begin{equation*}
        h_K(v) \coloneq \sup_{w \in K} |\langle v,w \rangle|,
    \end{equation*}
    (cf.~\cite[Definition~2.12]{Cruz_Uribe_Bownik_Extrapolation}).

    For the rest of the section, the facts about positive definite matrices follow
    by the same arguments,
    since the facts that the authors of~\cite{Cruz_Uribe_Bownik_Extrapolation} use
    hold in the complex setting (see~\cite[Chapter~6]{Bhatia_Positive_definite_matrices_2007}).

    \subsection{Convex-set valued functions in the complex setting}

    Here we briefly explain the main relevant adaptations that have to be performed in Section 3 of \cite{Cruz_Uribe_Bownik_Extrapolation} in the complex setting.
    Most of the results and arguments in that work can be ported to the complex setting in a straightforward way.
    The general principle that this relies on is the fact that one can identify $\complex^d$ with $\reals^{2d},$ being both equivalent as topological spaces, as metric spaces, as measure spaces and as real vector spaces.
    This, of course, requires implicitly other minor modifications, such as substituting the real Euclidean scalar product by the Hermitian scalar product.

    The application of this general principle together with the work in~\cite{Cruz_Uribe_Bownik_Extrapolation} already yields the necessary facts about the measurability of $\mathcal{K}(\complex^d)$ valued maps and selection functions.
    Statements about measurability of closed convex hulls of countable unions and of countable intersections of convex body valued functions follow similarly.
    Moreover, a similar remark holds for the measurability of the polar map.

    The point that is not immediate to adapt, and which is one of the keystones in the development of the results in~\cite{Cruz_Uribe_Bownik_Extrapolation} is the existence and properties of John ellipsoids for bounded complex symmetric convex subsets of $\complex^d$. These are thoroughly established in~\cite[Appendix~A]{DKPS2024} and the distinction between complex and real ellipsoids is made precise.  In fact, it is implicit in \cite[Subsection~A.4.7]{DKPS2024} that a real ellipsoid in $\reals^{2d}$ is a complex ellipsoid in $\complex^d$ if and only if it is invariant under a certain family of real orthogonal maps $\widetilde{L}_{z}.$

    Denote by $\mathbf{\overline{B}}_{\reals^{2d}}$ the closed unit ball in $\reals^{2d}$ and by $\mathbf{\overline{B}}_{\complex^{d}}$ the closed unit ball in $\complex^{d}.$
    Recall that a \emph{real ellipsoid} in $\reals^{2d}$ is a subset $E$ of $\reals^{2d}$ of the form $E=A\mathbf{\overline{B}}_{\reals^{2d}}$ for some real linear map $A:\reals^{2d}\to\reals^{2d}$.
    It can be seen that, for a real ellipsoid $E,$ the linear map $A$ with $E=A\mathbf{\overline{B}}_{\reals^{2d}}$ can be taken to be positive semidefinite and the choice of $A$ is unique under this condition.  Similarly, a \emph{complex ellipsoid} in $\complex^{d}$ is a subset $E$ of $\complex^{d}$ of the form $E=A\mathbf{\overline{B}}_{\complex^d}$ for some complex linear map $A:\complex^{d}\to\complex^{d}$. It can also be seen that for each complex ellipsoid $E$ there is a unique complex positive semidefinite linear map $A$ with $E=A\mathbf{\overline{B}}_{\complex^d}.$ A sketch of the real case can be found in \cite[page 304]{Goffin1983}. For completeness we include a detailed proof in the complex case, adapting the ideas in \cite[page 304]{Goffin1983}.

    \begin{lemma}
        \label{lem:matrix_ellipsoid}
        Let $A,C\in M_{d}(\complex)$. Consider the complex ellipsoid $E:=A\mathbf{\overline{B}}_{\complex^{d}}$. Then, we have $E=C\mathbf{\overline{B}}_{\complex^{d}}$ if and only if $AA^{\ast}=CC^{\ast}$. In particular, $H := |A^{\ast}| = (AA^{\ast})^{1/2}$ is the unique positive semidefinite matrix such that $E=H\mathbf{\overline{B}}_{\complex^d}$.
    \end{lemma}

    \begin{proof}
        Since $AA^{\ast}$ and $CC^{\ast}$ are both self-adjoint, we have $AA^{\ast}=CC^{\ast}$ if and only if
        \begin{equation*}
            \langle AA^{\ast}x,x\rangle = \langle CC^{\ast}x,x\rangle,\quad\forall x\in\complex^{d},
        \end{equation*}
        that is
        \begin{equation*}
            |A^{\ast}x|=|C^{\ast}x|,\quad\forall x\in\complex^{d}.
        \end{equation*}
        
        Observe that for all $x\in\complex^{d}$ it holds
        \begin{align*}
            |A^{\ast}x| &= \sup\{|\langle A^{\ast}x,y\rangle|:~y\in\mathbf{\overline{B}}_{\complex^{d}}\}
            = \sup\{|\langle x,Ay\rangle|:~y\in\mathbf{\overline{B}}_{\complex^{d}}\}\\
            &= \sup\{|\langle x,z\rangle|:~z\in A\mathbf{\overline{B}}_{\complex^{d}}\}
        \end{align*}
        and similarly
        \begin{equation*}
            |C^{\ast}x| 
            = \sup\{|\langle x,z\rangle|:~z\in C\mathbf{\overline{B}}_{\complex^{d}}\}.
        \end{equation*}
        Therefore, if $A\mathbf{\overline{B}}_{\complex^{d}} = C\mathbf{\overline{B}}_{\complex^{d}}$, then it is clear that $|A^{\ast}x|=|C^{\ast}x|$, for all $x\in\complex^{d}$ and thus $AA^{\ast}=CC^{\ast}$.
        
        Conversely, assume that $AA^{\ast}=CC^{\ast}$. Then $|A^{\ast}|=(AA^{\ast})^{1/2}=(CC^{\ast})^{1/2}=|C^{\ast}|$. By the polar decomposition we can write $A^{\ast}=U|A^{\ast}|$ and $C^{\ast}=V|C^{\ast}|$ for some unitary matrices $U,V$, therefore
        \begin{align*}
            A\mathbf{\overline{B}}_{\complex^{d}}=
            |A^{\ast}|U^{\ast}\mathbf{\overline{B}}_{\complex^{d}}
            =|A^{\ast}|\mathbf{\overline{B}}_{\complex^{d}}
            =|C^{\ast}|\mathbf{\overline{B}}_{\complex^{d}}
            =|C^{\ast}|V^{\ast}\mathbf{\overline{B}}_{\complex^{d}}
            =C\mathbf{\overline{B}}_{\complex^{d}}.
        \end{align*}

        Finally, the last claim follows immediately from the fact that $HH^{\ast}=AA^{\ast}$.
    \end{proof}
    
    Let us denote by $R:\complex^{d}\to\reals^{2d}$ the map given by
    \begin{equation*}
    	R(x_1+ix_2,\ldots,x_{2d-1}+ix_{2d}):=(x_1,x_2,\ldots,x_{2d-1},x_{2d}).
    \end{equation*}
    If $A:\complex^{d}\to\complex^{d}$ is any complex linear map, then $A$ corresponds to the real linear map $\widetilde{A} = RAR^{-1}:\reals^{2d}\to\reals^{2d}.$
    Now consider for each $z\in\complex$ the map $L_{z}:\complex^{d}\to\complex^{d}$ denoting componentwise multiplication with $z.$
    Then we can consider the corresponding linear map $\widetilde{L}_{z} \coloneq RL_{z}R^{-1}:\reals^{2d}\to\reals^{2d}.$
    Observe that the matrix corresponding to $\widetilde{L}_{z}$ is a $(2d \times 2d)$-matrix with $d$ blocks of the form
    \begin{equation*}
        \begin{bmatrix}
        	\mathrm{Re}(z) & -\mathrm{Im}(z)\\
        	\mathrm{Im}(z) & \mathrm{Re}(z)
        \end{bmatrix}
    \end{equation*}
    on the diagonal.
    With the notation that we have introduced, we have that for each $z\in\complex$ and for each nonempty $E \subseteq \complex^d$ it holds that $zE=E$ if and only if $\widetilde{L}_{z}(R(E))=R(E).$
    Moreover, if $z\in\complex$ with $|z|=1,$ then $\widetilde{L}_{z}$ is a real orthogonal map.
    The next lemma states the precise relation between real and complex ellipsoids.

     \begin{lemma}
         \label{lem:complex_vs_real_ellipsoids}
          Let $\mathcal{C}_{2d}(\reals)$ be the set of all real ellipsoids in $\reals^{2d}$ and let $\mathcal{C}_{d}(\complex)$ be the set of all complex ellipsoids in $\complex^{d}$. Then we have
     \begin{equation}
     	\label{eq:complex_ellipsoids}
     	\mathcal{C}_{d}(\complex) = \{R^{-1}(E):~E\in\mathcal{C}_{2d}(\reals^{2d})\text{ and }\widetilde{L}_{z}(E)=E \text{ for each } |z| = 1\}.
     \end{equation}
     In particular, the complex ellipsoids in $\complex^d$ form a closed subset of the real ellipsoids in $\reals^{2d}$.
     \end{lemma}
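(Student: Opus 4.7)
The plan is to prove the two inclusions that make up \eqref{eq:complex_ellipsoids} separately, and then observe that the closedness assertion is an immediate consequence of the resulting characterization.

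For the forward inclusion $\mathcal{C}_{d}(\complex)\subseteq\{R^{-1}(E):\ldots\}$, I would start from a complex ellipsoid $F=A\complexball{d}$ with $A\colon\complex^{d}\to\complex^{d}$ invertible complex linear. Since $H$ is a ring monomorphism transporting the action of $A$ on $\complex^{d}$ to the action of $H(A)$ on $\reals^{2d}$ (that is, $R(Av)=H(A)R(v)$) and $R(\complexball{d})=\realball{2d}$, I obtain $R(F)=H(A)\realball{2d}$, which is a real ellipsoid because $H(A)$ is invertible. The invariance under $\widetilde{L}_{z}$ for every $z\in\complex$ with $|z|=1$ then follows from complex linearity of $A$: $L_{z}F=L_{z}A\complexball{d}=AL_{z}\complexball{d}=A\complexball{d}=F$, and applying $R$ gives $\widetilde{L}_{z}(R(F))=R(F)$.

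For the reverse inclusion, I would take a real ellipsoid $E\in\mathcal{C}_{2d}(\reals)$ with $\widetilde{L}_{z}(E)=E$ for every $z\in\complex$ with $|z|=1$, and write $E=M\realball{2d}$, where $M:=M_{\reals,2d}(E)$ is the unique real positive definite map representing $E$. Using that each $\widetilde{L}_{z}$ with $|z|=1$ is real orthogonal (so $\widetilde{L}_{z}\realball{2d}=\realball{2d}$), the invariance of $E$ can be rewritten as $(\widetilde{L}_{z}M\widetilde{L}_{z}^{-1})\realball{2d}=M\realball{2d}$. Since $\widetilde{L}_{z}M\widetilde{L}_{z}^{-1}$ is again real positive definite, the uniqueness of the positive definite representative forces $\widetilde{L}_{z}M\widetilde{L}_{z}^{-1}=M$, i.e.\ $M$ commutes with every $\widetilde{L}_{z}$ with $|z|=1$. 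Since the span of $\{\widetilde{L}_{z}:|z|=1\}$ equals $\mathrm{span}_{\reals}\{I,\widetilde{L}_{i}\}$, this is equivalent to $M$ commuting with $\widetilde{L}_{i}$, which is precisely the condition for $M$ to be of the form $H(\tilde{M})$ for some complex linear $\tilde{M}\colon\complex^{d}\to\complex^{d}$. A short computation using $\langle R(\cdot),R(\cdot)\rangle_{\reals}=\mathrm{Re}\langle\cdot,\cdot\rangle_{\complex}$ together with the commutation with $\widetilde{L}_{i}$ transfers symmetry of $M$ to hermiticity of $\tilde{M}$ and positive definiteness of $M$ to that of $\tilde{M}$; concretely, applying the identity $\langle Mx,y\rangle_{\reals}=\langle x,My\rangle_{\reals}$ both to $y$ and to $\widetilde{L}_{i}y$ recovers both the real and the imaginary part of $\langle\tilde{M}u,v\rangle_{\complex}$. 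Therefore $R^{-1}(E)=\tilde{M}\complexball{d}\in\mathcal{C}_{d}(\complex)$.

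The closedness assertion then follows immediately: identifying each real ellipsoid with its unique positive definite representative, the subset $\mathcal{C}_{d}(\complex)$ corresponds to the linear, and hence closed, subspace $\{M:M\widetilde{L}_{i}=\widetilde{L}_{i}M\}$ of real positive definite matrices. The main obstacle in the plan is the transfer of hermiticity and positive definiteness across $R$ via the commutation with $\widetilde{L}_{i}$: it is short, but needs to be written out carefully because the real inner product recovers only the real part of the complex one, so the commutation with $\widetilde{L}_{i}$ is precisely what is needed to recover the imaginary part and thereby the full hermitian structure.
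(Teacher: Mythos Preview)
Your proof is correct and follows essentially the same strategy as the paper: both prove the two inclusions separately, and in the reverse inclusion both use the uniqueness of the positive definite representative $M_{\reals,2d}(E)$ to deduce that it commutes with every $\widetilde{L}_{z}$, and hence that $R^{-1}M_{\reals,2d}(E)R$ is complex linear. Two minor points of divergence are worth noting. First, your transfer of hermiticity and positive definiteness to $\tilde{M}$ is correct but unnecessary, since any invertible complex linear map already yields a complex ellipsoid by definition; the paper simply stops once complex linearity (and invertibility) is established. Second, for the closedness assertion the paper argues directly in the Hausdorff metric, using that each $\widetilde{L}_{z}$ is an orthogonal map and hence an isometry for that metric, so the invariance condition $\widetilde{L}_{z}(E)=E$ is manifestly closed; your route via the commutation condition on the matrix side is also valid, but tacitly uses that the bijection $E\mapsto M_{\reals,2d}(E)$ between ellipsoids and positive definite matrices is a homeomorphism.
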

     
    \begin{proof}   
     Here, for a real ellipsoid $E$ in $\reals^{2d}$ we denote by $M_\reals(E)$ the unique positive semidefinite matrix with $E = M_{\reals}(E)\mathbf{\overline{B}}_{\reals^{2d}}.$
     Likewise, for a complex ellipsoid $E$ in $\complex^d,$ we use $M_\complex$ to denote the unique complex positive semidefinite matrix with $E = M_{\complex}(E)\mathbf{\overline{B}}_{\complex^{d}}.$
     
     First of all, let $E$ be a real ellipsoid in $\reals^{2d}$  with $\widetilde{L}_{z}(E)=E$. Then, for each $z\in\complex$ with $|z|=1$ we have
     \begin{align*}
     	E&=\widetilde{L}_{z}E=\widetilde{L}_{z}M_{\reals}(E)\mathbf{\overline{B}}_{\reals^{2d}}
        =(\widetilde{L}_{z}M_{\reals}(E)(\widetilde{L}_{z}M_{\reals}(E))^{*})^{1/2}\mathbf{\overline{B}}_{\reals^{2d}}\\
     	&=\widetilde{L}_{z}M_{\reals}(E)\widetilde{L}_{z}^{-1}\mathbf{\overline{B}}_{\reals^{2d}},
     \end{align*}
     because $\widetilde{L}_{z}$ is orthogonal, where in the third equality we have used the polar decomposition of $(\widetilde{L}_{z}M_{\reals}(E))^{*}$ as in the proof of Lemma~\ref{lem:matrix_ellipsoid}. Since $\widetilde{L}_{z}M_{\reals}(E)\widetilde{L}_{z}^{-1}$ is positive semidefinite, uniqueness of $M_{\reals}(E)$ implies that
     \begin{equation*}
     	M_{\reals}(E) = \widetilde{L}_{z}M_{\reals}(E)\widetilde{L}_{z}^{-1} = \widetilde{L}_{z}^{-1}M_{\reals}(E)\widetilde{L}_{z}.
     \end{equation*}
     Now we need to check that $R^{-1}(E)$ is a complex ellipsoid. We compute
     \begin{equation*}
     	R^{-1}E = R^{-1}M_{\reals}(E)\mathbf{\overline{B}}_{\reals^{2d}} = R^{-1}M_{\reals}(E)R\mathbf{\overline{B}}_{\complex^{d}}.
     \end{equation*}
     Thus, in order to show that $R^{-1}E\in\mathcal{C}_{d}(\complex),$ it suffices to show that the real linear map $R^{-1}M_{\reals}(E)R:\complex^{d}\to\complex^{d}$ is in fact complex linear. To this end, it suffices to prove that
     \begin{equation*}
     	L_{z}R^{-1}M_{\reals}(E)R = R^{-1}M_{\reals}(E)RL_{z}
     \end{equation*}
     for all $z\in\complex$ with $|z|=1$. For such $z$ we compute
     \begin{align*}
     	&L_{z}R^{-1}M_{\reals}(E)RL_{z}^{-1}=L_{z}R^{-1}\widetilde{L}_{z}^{-1}M_{\reals}(E)\widetilde{L}_{z}RL_{z}^{-1}\\
     	&=L_{z}R^{-1}RL_{z}^{-1}R^{-1}M_{\reals}(E)RL_{z}R^{-1}RL_{z}^{-1}\\
     	&=R^{-1}M_{\reals}(E)R.
     \end{align*}
	This proves the inclusion $\supseteq$ in \eqref{eq:complex_ellipsoids}.
	
	We now show the inclusion $\subseteq$ in \eqref{eq:complex_ellipsoids}. Consider a complex ellipsoid $E$ in $\complex^{d},$ so that $R(E)$ is a real ellipsoid in $\reals^{2d}$ because
	\begin{align*}
		R(E) = RM_{\complex}(E)R^{-1}\mathbf{\overline{B}}_{\reals^{2d}}.
	\end{align*}
    Moreover, for all $z\in\complex$ with $|z|=1$ we have
     \begin{align*}
     	\widetilde{L}_{z}R(E) &= \widetilde{L}_{z}RM_{\complex}(E)R^{-1}\mathbf{\overline{B}}_{\reals^{2d}} = RL_{z}R^{-1}RM_{\complex}(E)R^{-1}\mathbf{\overline{B}}_{\reals^{2d}}\\
     	&= RL_{z}M_{\complex}(E)R^{-1}\mathbf{\overline{B}}_{\reals^{2d}} = RM_{\complex}(E)L_{z}R^{-1}\mathbf{\overline{B}}_{\reals^{2d}}
     	= RM_{\complex}(E)L_{z}\mathbf{\overline{B}}_{\complex^{d}}\\
     	&=
     	RM_{\complex}(E)\mathbf{\overline{B}}_{\complex^d} = R(E),
     \end{align*}
    where we used the fact that $L_{z}M_{\complex}(E) = M_{\complex}(E)L_{z}$ by linearity and the fact that the closed unit ball of $\complex^{d}$ is invariant under multiplication with $z.$
    This also shows that the complex ellipsoids in $\complex^d$ form a closed subset of the real ellipsoids in $\reals^{2d},$
    since orthogonal maps induce isometries with respect to the Hausdorff metric.
    \end{proof}
    
     The previous exposition is related to~\cite[Theorem~3.7]{Cruz_Uribe_Bownik_Extrapolation}.
    The idea behind this result is the fact that
    for any measurable convex body valued function $F,$
    the function $G(x)$ defined as the John ellipsoid of the convex body $F(x)$
    is also measurable.
    For the reader's convenience, we include the precise statement
    in the context of complex convex body valued functions.
    
    \begin{theorem}[Theorem~3.7 in \cite{Cruz_Uribe_Bownik_Extrapolation}]
        Suppose that $F\colon \Omega \rightarrow \convex(\complex^d)$ is measurable.
        Then there exists a measurable matrix-valued mapping $W\colon \Omega \rightarrow \matrices{d}{\complex}$ such that
        \begin{enumerate}[(i)]
            \item
            the columns of $W(x)$ are mutually orthogonal,
            \item 
            for every $x \in \Omega,$ it holds that
            \begin{equation*}
                W(x) \mathbf{\overline{B}}_{\complex^d} \subseteq F(x)
                \subseteq \sqrt{d} W(x) \mathbf{\overline{B}}_{\complex^d}.
            \end{equation*}
        \end{enumerate}
    \end{theorem}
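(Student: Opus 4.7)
The plan is to adapt the proof of the real-setting version \cite[Theorem~3.7]{Cruz_Uribe_Bownik_Extrapolation} by substituting the complex John ellipsoid in place of the real one. For each $x \in \Omega,$ I would define $E(x)$ to be the unique ellipsoid of maximal volume inscribed in $F(x).$ Viewing $F(x) \subseteq \complex^d$ also as a subset of $\reals^{2d},$ this ellipsoid exists in the usual real sense; the complex symmetry of $F(x)$ together with the uniqueness of the John ellipsoid forces $\widetilde{L}_z E(x) = E(x)$ for every $z \in \complex$ with $|z| = 1,$ so by Lemma~\ref{lem:complex_vs_real_ellipsoids} it is in fact a complex ellipsoid in $\complex^d.$ The key point of working in the complex category is the sharp inclusion $E(x) \subseteq F(x) \subseteq \sqrt{d}\, E(x),$ whose proof is carried out in \cite[Appendix~A]{DKPS2024}; note the factor $\sqrt{d},$ rather than the $\sqrt{2d}$ that the real John theorem in $\reals^{2d}$ would directly yield, which is what one needs for the later applications to matrix $A_p$ weights.

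To produce the desired matrix with mutually orthogonal columns, I would write $E(x) = W_0(x)\complexball{d}$ for the unique complex positive-definite Hermitian matrix $W_0(x),$ and then apply a measurable spectral decomposition $W_0(x) = U(x)\Sigma(x)U(x)^{*}$ with $U(x)$ unitary and $\Sigma(x)$ diagonal with positive entries. Setting $W(x) \coloneq U(x)\Sigma(x),$ one has $W(x)\complexball{d} = W_0(x)\complexball{d} = E(x),$ since $U(x)\complexball{d} = \complexball{d},$ and the columns of $W(x)$ are scaled orthonormal eigenvectors of $W_0(x),$ hence mutually orthogonal. The required inclusions $W(x)\complexball{d} \subseteq F(x) \subseteq \sqrt{d}\, W(x)\complexball{d}$ then follow immediately from the sharp complex John inclusion recalled above.

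The main obstacle is verifying measurability at each step. The measurability of $x \mapsto E(x)$ would be obtained by adapting the argument of \cite[Theorem~3.7]{Cruz_Uribe_Bownik_Extrapolation}, expressing the John ellipsoid as the minimizer of a suitable continuous functional over a measurable class of (now complex) positive-definite Hermitian matrices and applying a measurable selection theorem. The bicontinuity with respect to the Hausdorff metric of the map $A \mapsto A\complexball{d}$ on the cone of complex positive-definite Hermitian matrices then yields measurability of $x \mapsto W_0(x),$ and a measurable diagonalization in the spirit of the proof of \cite[Theorem~3.6]{Cruz_Uribe_Bownik_Extrapolation}, carried out within each measurably parametrized eigenspace of $W_0(x),$ produces measurable choices of $U(x)$ and $\Sigma(x),$ and hence of $W(x).$ The delicate point of handling eigenvalue coincidences is dealt with by the iterative coordinate-selection procedure already invoked in Subsection~\ref{subsec:ComplexConvexAndSeminorms}, applied inside each eigenspace.
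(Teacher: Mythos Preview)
Your argument works cleanly when $F(x)$ has nonempty interior in $\complex^d$, but there is a genuine gap: the statement allows $F(x)$ to lie in a proper complex subspace, and in that case there is no unique complex John ellipsoid of positive volume, so your $E(x)$ is not well-defined and you certainly cannot write $E(x) = W_0(x)\complexball{d}$ with $W_0(x)$ positive-definite Hermitian. The paper addresses this by first stratifying $\Omega$ into measurable pieces $\Omega_k = \{x : \dim_{\complex}\operatorname{span} F(x) = k\}$ (using that the orthogonal projection $P(x)$ onto $\operatorname{span} F(x)$ is measurable, so its rank is a measurable function of $x$), then on each $\Omega_k$ choosing a measurable orthonormal frame $v_1(x),\ldots,v_k(x)$ for $\operatorname{span} F(x)$ to identify $F(x)$ isometrically with a convex body of nonempty interior in $\complex^k$, applying the John-ellipsoid construction there, and finally mapping back and padding the resulting $k\times k$ matrix with zero columns to obtain $W(x) \in \matrices{d}{\complex}$. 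Without this reduction your construction simply does not start on the degenerate strata.

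A secondary remark: even in the nondegenerate case, your route to $W(x)$ via a measurable spectral decomposition of the positive-definite $W_0(x)$ differs from the paper's, which instead builds the columns of $W(x)$ one at a time by iteratively applying Lemma~\ref{lemma:ModulusSelectionFunction} to select a maximal-norm vector in the current ellipsoid and then intersecting with its orthogonal complement (this is Lemma~\ref{lemma:JohnEllipsoidMatrix}). Both approaches yield a measurable matrix with mutually orthogonal columns and the same image $E(x)$; the paper's iterative scheme has the advantage of applying verbatim to ellipsoids with empty interior, which is exactly what is needed to dovetail with the stratification above.
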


    The proof of this result is based on \cite[Lemma~3.8]{Cruz_Uribe_Bownik_Extrapolation},
    \cite[Lemma~3.9]{Cruz_Uribe_Bownik_Extrapolation}
    and \cite[Lemma~3.10]{Cruz_Uribe_Bownik_Extrapolation},
    in a way analogous to that of the real convex body context.
    We include the statements of these lemmata without proofs, since they are the same as in the real context once one has Lemma~\ref{lem:complex_vs_real_ellipsoids}.
    However, we mention some remarks about each of them.

    \begin{lemma}[Lemma~3.8 in \cite{Cruz_Uribe_Bownik_Extrapolation}]
        \label{lemma:JohnEllipsoidMeasurability}
        Given a measurable convex body valued function $F\colon \Omega \rightarrow \convex(\complex^d)$ such that $F(x)$ has nonempty interior for every $x \in \Omega,$
        there exists a measurable convex body valued function $G\colon \Omega \rightarrow \convex(\complex^d)$ such that $G(x)$ is an ellipsoid with nonempty interior and with
        \begin{equation}
            \label{eq:JohnEllipsoidInclusions}
            G(x) \subseteq F(x) \subseteq \sqrt{d} G(x)
        \end{equation}
        for every $x \in \Omega.$
    \end{lemma}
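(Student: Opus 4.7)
The plan is to adapt the proof of the real analog (Lemma~3.8 in~\cite{Cruz_Uribe_Bownik_Extrapolation}) to the complex setting, leveraging the characterization of complex ellipsoids as $\widetilde{L}_{z}$-invariant real ellipsoids established in Lemma~\ref{lem:complex_vs_real_ellipsoids}. Concretely, I would define $G(x)$ pointwise as the complex John ellipsoid of $F(x)$, namely the unique complex ellipsoid of maximal volume contained in $F(x)$. Existence follows from a standard compactness argument applied to the set of complex positive-definite matrices $A$ with $A\,\complexball{d}\subseteq F(x)$, combined with continuity of the determinant. Uniqueness can be derived from the classical uniqueness of the \emph{real} John ellipsoid in $\reals^{2d}$ together with the balanced property of $F(x)$: if $E\subseteq R(F(x))$ is the unique real John ellipsoid, then for every $z\in\complex$ with $|z|=1$ the real orthogonal map $\widetilde{L}_{z}$ preserves both $R(F(x))$ and real volume, so $\widetilde{L}_{z}E$ is also a real John ellipsoid, forcing $\widetilde{L}_{z}E=E$; Lemma~\ref{lem:complex_vs_real_ellipsoids} then shows $E=R(G(x))$ for a complex ellipsoid $G(x)$. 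The sandwich $G(x)\subseteq F(x)\subseteq \sqrt{d}\,G(x)$ is the complex John inequality for balanced bodies in $\complex^d$; the improved constant $\sqrt{d}$ (as opposed to $\sqrt{2d}$ from the real theorem) reflects the additional $\widetilde{L}_{z}$-symmetry, and can be obtained either by imitating the classical argument with Hermitian forms or by exploiting the invariance to refine the real John bound.

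For the measurability of $G$, I would invoke the equivalence of the paper's notion of measurability with measurability as a mapping into convex bodies endowed with the Hausdorff metric (Theorem~3.5 in~\cite{Cruz_Uribe_Bownik_Extrapolation}). Given this, it suffices to establish that the assignment $K\mapsto\mathrm{John}(K)$ is continuous on balanced convex bodies in $\complex^d$ with nonempty interior under the Hausdorff metric; composition with $F$ then yields measurability of $G$. Continuity of the John ellipsoid assignment follows from uniqueness combined with a compactness argument: if $K_{n}\to K$ in Hausdorff metric, then any Hausdorff subsequential limit of $\mathrm{John}(K_{n})$ is a complex ellipsoid (by closedness of the family of complex ellipsoids within the family of real ones, established at the end of Lemma~\ref{lem:complex_vs_real_ellipsoids}) contained in $K$ with maximal volume, hence must equal $\mathrm{John}(K)$ by uniqueness.

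The main obstacle will be verifying the continuity of the John ellipsoid map in Hausdorff metric on balanced bodies, since it requires reconciling uniqueness with compactness of Hausdorff limits of ellipsoids together with upper semicontinuity of volume under Hausdorff convergence. An alternative route, which may be cleaner for measurability purposes, is to parametrize complex ellipsoids by positive-definite Hermitian matrices $A$ and argue via the Kuratowski--Ryll-Nardzewski measurable selection theorem applied to the multifunction $x\mapsto\{A\colon A\,\complexball{d}\subseteq F(x),\ \det A=\max\}$, once one checks that its graph is measurable using Castaing-type representations of $F$. Once Lemma~\ref{lem:complex_vs_real_ellipsoids} is in hand, no further complex-specific difficulties arise, as all remaining steps reduce cleanly to the corresponding real statements.
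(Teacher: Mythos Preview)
Your proposal is correct and follows a route that the paper itself explicitly acknowledges as a valid alternative in the Remark immediately following the proof. The paper's primary argument is more constructive: it fixes a dense sequence $P_1,P_2,\ldots$ of invertible matrices in $\matrices{d}{\complex}$, builds an increasing sequence of measurable ellipsoid-valued approximants $G_n$ by greedily selecting $P_{n}\complexball{d}$ whenever it fits inside $F(x)$ and improves the volume, and then shows $G_n(x)\to G(x)$ in the Hausdorff metric via Blaschke selection and uniqueness of the John ellipsoid. Your approach instead packages the same Blaschke-selection-plus-uniqueness reasoning into a continuity statement for the map $K\mapsto\mathrm{John}(K)$ on balanced bodies with nonempty interior, then composes with $F$ using the Hausdorff-measurability equivalence of \cite[Theorem~3.5]{Cruz_Uribe_Bownik_Extrapolation}; this is precisely the alternative the paper attributes to~\cite{Mordhorst_2017} and~\cite[Subsection~A.4.7]{DKPS2024}. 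The paper's route has the advantage of being self-contained (no external continuity result needed) and closer to the original real proof in~\cite{Cruz_Uribe_Bownik_Extrapolation}, while yours is shorter once the continuity lemma is available and makes the role of Lemma~\ref{lem:complex_vs_real_ellipsoids} (closedness of complex ellipsoids among real ones) more transparent. Your discussion of existence and uniqueness via $\widetilde{L}_{z}$-invariance of the real John ellipsoid is also in line with how~\cite[Appendix~A]{DKPS2024} handles the complex John ellipsoid, which the paper relies on implicitly.
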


    \begin{remark}
        As mentioned before, the proof in the complex case follows the same arguments as in the real case almost verbatim, only with the obvious modifications.
        It is worth mentioning that the proof makes use of the Blaschke Selection Theorem~\cite[Theorem~1.8.7]{Schneider_Convex_Bodie_1993},
        which is equally valid in the complex setting.
        
        Also, this result can be shown by another approach.
        One can note that the map sending each convex body with nonempty interior to its John ellipsoid is continuous with respect to the Hausdorff distance. In the real case, \cite[Subsection~A.4.3]{DKPS2024} appeals directly to~\cite{Mordhorst_2017} for this result. An extension to the complex setting is performed in detail in \cite[Subsection~A.4.7]{DKPS2024}. Combining this with the complex version of \cite[Theorem 3.5]{Cruz_Uribe_Bownik_Extrapolation}, we immediately deduce the complex version of \cite[Lemma 3.8]{Cruz_Uribe_Bownik_Extrapolation}, since the composition of measurable maps remains measurable.
    \end{remark}

    The statement and proof of \cite[Lemma 3.9]{Cruz_Uribe_Bownik_Extrapolation} for complex convex bodies
    correspond to Lemma~\ref{lemma:ModulusSelectionFunction} in the present work.
    The proof included in Section~\ref{sec:ConvexBodyValuedFunctions} already covers the case of complex convex bodies.
    
    Finally, \cite[Lemma 3.10]{Cruz_Uribe_Bownik_Extrapolation} relates measurable complex ellipsoid valued mappings $G$
    to measurable matrix valued mappings with mutually orthogonal columns.

    \begin{lemma}[Lemma~3.10 in~\cite{Cruz_Uribe_Bownik_Extrapolation}]
        \label{lemma:JohnEllipsoidMatrix}
        Consider a measurable mapping $G\colon \Omega \rightarrow \convex(\complex^d)$
        such that $G(x)$ is a complex ellipsoid for every $x \in \Omega$
        (possibly with empty interior).
        Then there exists a measurable mapping $W\colon \Omega \rightarrow \matrices{d}{\complex}$ such that
        \begin{enumerate}[(i)]
            \item
            the columns of the matrix $W(x)$ are mutually orthogonal,
            \item 
            it holds that $G(x) = W(x) \complexball{d}$ for every $x \in \Omega.$
        \end{enumerate}
    \end{lemma}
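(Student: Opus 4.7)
The plan is to recover the principal semi-axes of the complex ellipsoid $G(x)$ one at a time, in a measurable fashion, by iterating a selection procedure in the spirit of Lemma~\ref{lemma:ModulusSelectionFunction}. The key geometric observation that I will use repeatedly is that whenever $w_1,\ldots,w_d \in \complex^d$ are mutually orthogonal and $W = [w_1,\ldots,w_d] \in \matrices{d}{\complex}$ denotes the matrix with these columns, then
\begin{equation*}
W\complexball{d} = \left\{\sum_{i=1}^d z_i w_i \colon \sum_{i=1}^d |z_i|^2 \leq 1\right\},
\end{equation*}
so the $w_i$ are precisely the semi-axes of the complex ellipsoid $W\complexball{d}$. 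In particular, a vector in $W\complexball{d}$ of maximal norm is necessarily a unit-complex multiple of some $w_i$ of maximal length, and intersecting $W\complexball{d}$ with $\mathrm{span}_{\complex}\{w_1,\ldots,w_k\}^{\perp}$ returns exactly the complex ellipsoid formed by the remaining $d-k$ semi-axes.

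First I would apply Lemma~\ref{lemma:ModulusSelectionFunction} (whose proof is valid verbatim for measurable $\convex$-valued mappings on the given measurable space $\Omega$) to produce a measurable $w_1\colon \Omega \to \complex^d$ with $w_1(x) \in G(x)$ and $|w_1(x)| = |G(x)|$ for every $x \in \Omega;$ this becomes the first column of $W(x).$ Next I would proceed by induction on $k.$ Assume measurable mappings $w_1,\ldots,w_k\colon \Omega \to \complex^d$ have been produced so that, at every $x \in \Omega,$ the $w_i(x)$ are pairwise orthogonal and admit a completion to an orthogonal family $w_1(x),\ldots,w_k(x),\tilde{w}_{k+1}(x),\ldots,\tilde{w}_d(x)$ with
\begin{equation*}
G(x) = [w_1(x),\ldots,w_k(x),\tilde{w}_{k+1}(x),\ldots,\tilde{w}_d(x)]\complexball{d},
\end{equation*}
where $|w_1(x)|,\ldots,|w_k(x)|$ are the $k$ largest semi-axis lengths of $G(x).$

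Now set $F_k(x) \coloneq \mathrm{span}_{\complex}\{w_1(x),\ldots,w_k(x)\}^{\perp},$ whose orthogonal projection $P_k(x)$ is measurable in $x$ by the complex version of Theorem~3.6 of~\cite{Cruz_Uribe_Bownik_Extrapolation} discussed above. By the geometric observation, $G(x) \cap F_k(x) = P_k(x) G(x)$ is the complex ellipsoid with semi-axes $\tilde{w}_{k+1}(x),\ldots,\tilde{w}_d(x),$ and its measurability as a $\convex$-valued function follows from that of $G$ and $P_k$: if $\{f_j\}$ is a countable selection family for $G$ in the sense of the complex version of Theorem~3.2 of~\cite{Cruz_Uribe_Bownik_Extrapolation}, then $\{P_k f_j\}$ is one for $G\cap F_k.$ Applying Lemma~\ref{lemma:ModulusSelectionFunction} again produces a measurable $w_{k+1}\colon \Omega \to \complex^d$ with $w_{k+1}(x) \in G(x) \cap F_k(x)$ and $|w_{k+1}(x)| = |G(x) \cap F_k(x)|.$ This $w_{k+1}(x)$ is automatically orthogonal to $w_1(x),\ldots,w_k(x)$ and is a unit-complex multiple of some $\tilde{w}_{i}(x)$ of maximal length, so it may be substituted for that $\tilde{w}_{i}(x)$ to preserve the inductive hypothesis. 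After $d$ iterations, the matrix $W(x)$ with columns $w_1(x),\ldots,w_d(x)$ is measurable, has mutually orthogonal columns, and satisfies $W(x)\complexball{d} = G(x).$

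The hard part will be the careful bookkeeping needed to justify the identity $G(x) \cap F_k(x) = P_k(x) G(x)$ and the identification of the successive maximizers with semi-axes of $G(x),$ especially in the presence of degeneracies (repeated semi-axis lengths or $G(x)$ of complex rank strictly less than $d$), since the unit-complex multiple of a semi-axis delivered by the selection lemma need not match any a priori fixed choice of semi-axes. This is overcome by noting that the inductive hypothesis only requires the existence of \emph{some} completion of $w_1(x),\ldots,w_k(x)$ to an orthogonal-column decomposition of $G(x),$ and such a completion is stable under replacement of any $\tilde{w}_i$ by a unit-complex multiple of the same norm; in particular, if $|G(x)\cap F_k(x)| = 0$ at some stage then the remaining $w_{k+1}(x),\ldots,w_d(x)$ can all be taken to be $0,$ consistent with $G(x)$ having complex rank at most $k.$
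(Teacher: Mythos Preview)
Your proposal is correct and follows essentially the same inductive scheme as the paper's proof: repeatedly apply Lemma~\ref{lemma:ModulusSelectionFunction} to extract a semi-axis of maximal length, pass to the intersection of $G(x)$ with the orthogonal complement of the span of the already-constructed columns, and iterate $d$ times. The only minor difference is in how you justify measurability of $G(x)\cap F_k(x)$: you argue via the projection identity $G(x)\cap F_k(x)=P_k(x)G(x)$ and push selection functions through $P_k,$ whereas the paper simply invokes the standard fact that intersections of measurable closed-set-valued maps are measurable; both routes are valid and lead to the same construction.
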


    \begin{remark}
        The proof consists in constructing the columns $v_1,\ldots,v_d$ of the matrix $W(x)$
        as measurable mappings $\Omega \rightarrow \complex^d$ and being mutually orthogonal
        (with some of them possibly null at a given $x \in \Omega$).
        This construction is the same for the complex case than for the real case.
        The major difference being only that, instead of using the rationals $\mathbb{Q}$ as a dense set of $\reals,$
        one needs to take the field of Gaussian rationals $\mathbb{Q}(i) = \{p+iq\colon p,q\in\mathbb{Q}\}$
        as a dense set of $\complex.$
    \end{remark}

    Once one has the complex setting versions of~\cite[Lemmata~3.8, 3.9 and~3.10]{Cruz_Uribe_Bownik_Extrapolation},
    the proof of \cite[Theorem 3.7]{Cruz_Uribe_Bownik_Extrapolation} follows equally by replacing $\reals^d$ by $\complex^d,$
    and modifying the related notions accordingly (e.g. the Euclidean scalar product by the hermitian one).

    We make some final remarks about the second part of \cite[Section 3]{Cruz_Uribe_Bownik_Extrapolation}, which concerns integrals of convex-set valued maps.
    The definitions of the Aumann integral and integrable bounded functions there are valid with $\reals^{d}$ replaced by $\reals^{2d}$ and thus also by $\complex^d$.
    Note that the definition of the Aumann integral makes use only of the topological space and real vector space structures of $\reals^d$.
    Note also that complex symmetric sets are in particular real symmetric sets.
    These general principles can be applied through the rest of \cite[Section 3]{Cruz_Uribe_Bownik_Extrapolation} to obtain the complex versions of all results concerning the Aumann integral.

    Finally, another main point in \cite[Section 3]{Cruz_Uribe_Bownik_Extrapolation} is that
    Lemma~3.13 in~\cite{Cruz_Uribe_Bownik_Extrapolation} yields,
    for a given measurable real vector valued function $f,$
    a measurable real convex body valued function $F$ with $|F(x)| = |f(x)|$ at every $x \in \Omega.$
    For the complex version, we consider a measurable complex vector valued function $f$
    and we construct the corresponding measurable complex convex body valued function $F$
    with $|F(x)| = |f(x)|$ at every point in the domain.
    This is achieved performing one major change that is a recurring theme in the passage from the real to the complex case. Namely, given $f\in\mathrm{L}^1(\Omega,\complex^d)$, one defines the convex-set valued map $F$ by
    \begin{equation*}
        F(x):=\{zf(x):~z\in\complex\text{ with }|z|\leq1\},\quad x\in\Omega.
    \end{equation*}
    This map is measurable just as in the real case in \cite[Lemma 3.13]{Cruz_Uribe_Bownik_Extrapolation} because also the closed unit disk in the complex plane has a dense countable subset.

    Finally, the rest of the results in \cite[Section 3]{Cruz_Uribe_Bownik_Extrapolation} follow by the general principles mentioned before.
    Namely, one uses that $\complex^d$ shares an equivalent structure to $\reals^{2d}.$
    In addition, all abstract theorems used to treat Aumann integrals are equally valid in the complex setting as in the real one.

    \subsection{Seminorm functions}

    The adaptation of~\cite[Section~4]{Cruz_Uribe_Bownik_Extrapolation} is performed similarly to that for~\cite[Section~2]{Cruz_Uribe_Bownik_Extrapolation} outlined above, using the corresponding results from the previously described adaptation of~\cite[Section~3]{Cruz_Uribe_Bownik_Extrapolation}. Additional ingredients are countable dense sets of $\complex^d$ such as $\mathbb{Q}(i)^d$
    and the complex version of Hahn-Banach Theorem. Finally, note that the characterization of the convergence of convex bodies used in~\cite[Section~4]{Cruz_Uribe_Bownik_Extrapolation}
    can be used in the same way in our context
    because it actually applies to general nonempty compact convex sets (see~\cite[Theorem~1.8.7]{Schneider_Convex_Bodie_1993}).

    \subsection{Main results including the Extrapolation Theorem with matrix weights}
    In the previous subsections we have explained how the theory developed to deal with real valued matrix weights
    in~\cite[Sections~2--4]{Cruz_Uribe_Bownik_Extrapolation}
    can be modified to be applied to complex valued matrix weights.
    Once these tools have been conveniently adapted,
    they can be used to get the complex version of the main results in that article
    without further changes.
    In other words, the exposition of~\cite[Sections~5--9]{Cruz_Uribe_Bownik_Extrapolation}
    holds for the complex setting by using the results explained in the current appendix.
    In particular, Theorem~\ref{thm:Extrapolation} holds.
    
    \printbibliography

    \Addresses

\end{document}